\documentclass{amsart}
\raggedbottom
\usepackage{tikz-cd}
\usepackage{xcolor}
\numberwithin{equation}{section}
\usepackage{amssymb}
%
\usepackage[T1]{fontenc}
\usepackage[utf8]{inputenc}
\allowdisplaybreaks

%
%
\let\cal\mathcal
\def\Ascr{{\cal A}}

\def\Dscr{{\cal D}}

\def\Mscr{{\cal M}}
\def\Nscr{{\cal N}}
\def\Oscr{{\cal O}}

\def\Qscr{{\cal Q}}

\def\Sscr{{\cal S}}
\def\Tscr{{\cal T}}

\def\Vscr{{\cal V}}

%
%
\let\blb\mathbb
\def\CC{{\blb C}}

\def\QQ{{\blb Q}}

\def \AA{{\blb A}}
\def \ZZ{{\blb Z}}
\def \TT{{\blb T}}
\def \NN{{\blb N}}
\def \RR{{\blb R}}

\def\id{\text{id}}
\def\Id{\operatorname{id}}

\def\Tau{\mathcal{T}}

\def\quot{/\!\!/}

\def\Qch{\operatorname{Qch}}

\def\Spec{\operatorname {Spec}}

\def\GL{\operatorname {GL}}
\def\PGL{\operatorname {PGL}}

\def\Hom{\operatorname {Hom}}

\def\End{\operatorname {End}}
\def\RHom{\operatorname {RHom}}

\def\End{\operatorname {End}}

\def\id{{\operatorname {id}}}

\def\rk{\operatorname {rk}}

\def\r{\rightarrow}

\DeclareMathOperator{\res}{res}

%
%

\newtheorem{lemma}{Lemma}[section]
\newtheorem{proposition}[lemma]{Proposition}
\newtheorem{theorem}[lemma]{Theorem}
\newtheorem{corollary}[lemma]{Corollary}

\theoremstyle{definition}

\newtheorem{definition}[lemma]{Definition}

{
\newtheorem{claim}{Claim}

}

\theoremstyle{remark}

\newtheorem{remark}[lemma]{Remark}

\newdimen\uboxsep \uboxsep=1ex
\def\uboxn#1{\vtop to 0pt{\hrule height 0pt depth 0pt\vskip\uboxsep
\hbox to 0pt{\hss #1\hss}\vss}}

\def\uboxs#1{\vbox to 0pt{\vss\hbox to 0pt{\hss #1\hss}
\vskip\uboxsep\hrule height 0pt depth 0pt}}

\def\Vol{\operatorname{Vol}}
\def\Perf{\operatorname{Perf}}
\def\BS{\operatorname{BS}}
\usepackage{comment}

\author[Valery Lunts, \v{S}pela \v{S}penko and Michel Van den Bergh]{Valery Lunts, \v{S}pela \v{S}penko and Michel
  Van den Bergh} 
\address{Department of Mathematics, Indiana University Bloomington, Rawles Hall 251, 831 East 3rd St., Bloomington, IN 47405-7106}
\email{vlunts@indiana.edu}
\address{National Research University Higher School of Economics, Moscow}
\address[\v{S}pela \v{S}penko]{D\'epartement de Math\'ematique, Universit\'e Libre de Bruxelles, Campus de la Plaine CP 213, Bld du Triomphe, B-1050 Bruxelles}
\email{spela.spenko@ulb.be}
\address[Michel Van den Bergh]{Vakgroep Wiskunde, Universiteit Hasselt, Universitaire Campus \\
  B-3590 Diepenbeek} 
\email{michel.vandenbergh@uhasselt.be}
\address{Vrije Universiteit Brussel, Department of Mathematics and Data Science, Pleinlaan 2\\B-1050 Brussel} 
\email{michel.van.den.bergh@vub.be}
\thanks{ The first author was supported by the Basic Research Program of the National
Research University Higher School of Economics. The second author is supported by a MIS grant from the National Fund for Scientific Research (FNRS) and an ACR grant from the Université Libre de Bruxelles. The third author is a senior researcher at the Research
  Foundation Flanders (FWO). 
  This project has received funding from the European Research Council (ERC) under the European Union's Horizon 2020 research and innovation programme (grant agreement No 885203).
In addition this project has received funding from the FWO grant G0D8616N: ``Hochschild cohomology and
  deformation theory of triangulated categories''.
}

\title[Catalan numbers and noncommutative Hilbert schemes]{Catalan numbers, parking functions, permutahedra and noncommutative Hilbert schemes}

\begin{document}
\begin{abstract}
  We find an explicit $S_n$-equivariant bijection between the
  integral points in a certain zonotope in $\RR^n$, combinatorially
  equivalent to the
  permutahedron, and the set of $m$-parking functions
  of length $n$. This bijection restricts to a bijection
  between the regular $S_n$-orbits and
  $(m,n)$-Dyck paths, the number of which is given by the Fuss-Catalan
  number $A_{n}(m,1)$. Our motivation came from studying tilting
  bundles on noncommutative Hilbert schemes. As a side result we use these tilting bundles to construct a semi-orthogonal decomposition of the derived category
  of noncommutative Hilbert schemes.
\end{abstract}
\maketitle

\section{Introduction}
\subsection{Some combinatorial results}
\label{sec:combinat}
In this section we state some purely combinatorial results which give a new interpretation of parking functions and Fuss-Catalan numbers \cite{Stanleybook,StanleyCatalan} in terms of lattice points in a certain polytope related to the permutathedron. In the next section we will give the motivation behind these results.

\medskip

Let $m,n\in \NN$ and let $(e_i)_{i=1,\ldots,n}$ be the standard basis for $\ZZ^n\subset \RR^n$. We let the symmetric group $S_n$ act on $
\ZZ^n$ and $\RR^n$ by permutations.
Let $\Delta^{m,n}$ be the $S_n$-invariant zonotope which is the Minkowski sum of the intervals
\begin{align*}
  \left[0,e_i\right],\quad &1\le i\le n,\\
  \left[0,\frac{m}{2}(e_i-e_j)\right],\quad & 1\le i\neq j\le n.
\end{align*}
For $\nu:=\sum_{i=1}^n e_i$ and $\tau\in \RR$ put $\Delta^{m,n}_\tau=\Delta^{m,n}+\tau \nu$.
\begin{definition} \label{def:admissible} We say that $\tau$ is \emph{admissible} if $\tau-m(n-1)/2$ is not a rational number with denominator $\le n$. 
\end{definition}
The significance of this condition is the following:
\begin{lemma}\label{lem:admissible} (see \S\ref{sec:admissible}) $\tau$ is admissible if and only if $\partial(\Delta^{m,n}_\tau)\cap \ZZ^n=\emptyset$.
\end{lemma}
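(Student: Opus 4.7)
The plan is to compute the facets of the zonotope $\Delta^{m,n}_\tau$ explicitly and translate the presence of boundary lattice points into an arithmetic condition matching admissibility.

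By standard zonotope theory, the facet normals of $\Delta^{m,n}_\tau$ (up to scaling) are exactly those vectors $\xi \in \RR^n$ for which the generators orthogonal to $\xi$ span a hyperplane. A direct analysis for the generators $e_i$ and $\frac{m}{2}(e_i - e_j)$ shows these are precisely the indicator vectors $\chi_I := \sum_{i \in I} e_i$ for nonempty $I \subseteq \{1,\ldots,n\}$. Computing the support function, the two facets perpendicular to $\chi_I$ (with $k := |I|$) lie in the hyperplanes
\begin{equation*}
\sum_{i \in I} x_i = c_I^+ := k + k\tau + \tfrac{mk(n-k)}{2}, \quad \sum_{i \in I} x_i = c_I^- := k\tau - \tfrac{mk(n-k)}{2}.
\end{equation*}
Since $c_I^+ - c_I^- = k + mk(n-k) \in \ZZ$, the two values are integers simultaneously; using also that $\tfrac{mk(k-1)}{2} \in \ZZ$, one checks this happens iff $k(\tau - m(n-1)/2) \in \ZZ$, i.e.\ iff admissibility fails at scale $k$.

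The direction $(\Leftarrow)$ is then immediate: any $x \in \partial \Delta^{m,n}_\tau \cap \ZZ^n$ lies on some facet, forcing $\sum_{i \in I} x_i = c_I^\pm$ to be an integer, so admissibility must fail at $k = |I|$.

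For $(\Rightarrow)$, assume $k(\tau - m(n-1)/2) \in \ZZ$ for some $k \in \{1,\ldots,n\}$ and set $I = \{1,\ldots,k\}$, $p_I = c_I^+/k$, $q_I = \tau - mk/2$, and $r = k\{p_I\} \in \{0,\ldots,k-1\}$. I propose the candidate $x \in \ZZ^n$ defined by $x_i = \lceil p_I \rceil$ for $1 \le i \le r$, $x_i = \lfloor p_I \rfloor$ for $r < i \le k$, and $x_i = \lceil q_I \rceil$ for $i > k$; by construction $\sum_{i \in I} x_i = c_I^+$, so $x$ lies on the upper facet hyperplane. The main obstacle is verifying $x \in \Delta^{m,n}_\tau$. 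For this I would use the explicit parametrization of the upper facet: every point on it is of the form $x_i = p_I + \tfrac{m}{2}\delta_i$ for $i \in I$ with $(\delta_i) \in Z_k := \sum_{i \ne j \in I}[0, e_i - e_j]$, and $x_i = q_I + a_i + \tfrac{m}{2}\delta'_i$ for $i \notin I$ with $a_i \in [0,1]$ and $(\delta'_i) \in Z_{n-k}$ (the analogue in $\RR^{I^c}$). Taking $(\delta'_i) = 0$ and $a_i = \lceil q_I \rceil - q_I \in [0,1]$ handles the $i \notin I$ part. For $i \in I$, a direct computation gives $\sum_{j \in J} \delta_j = \tfrac{2(sk - jr)}{mk}$ where $s = |J \cap \{1,\ldots,r\}|$ and $j = |J|$, so the elementary bound $|sk - jr| \le r(k-r) \le \tfrac{mk(k-1)}{2} \le \tfrac{mk\,j(k-j)}{2}$ (valid for $m, k \ge 1$ and $j \in [1,k-1]$) verifies the facet inequalities $|\sum_{j \in J} \delta_j| \le |J|(k-|J|)$ characterizing $Z_k$, so $(\delta_i) \in Z_k$ as required.
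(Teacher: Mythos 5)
Your proposal is correct, and its overall architecture matches the paper's: both identify the facet normals as $\pm\chi_I$, translate ``the facet hyperplane contains a lattice point'' into the arithmetic condition $c_I^\pm\in\ZZ$ (equivalently $k(\tau-m(n-1)/2)\in\ZZ$), and then reduce to showing that a lattice point on the facet hyperplane forces a lattice point on the facet itself. The difference lies in how this last reduction is carried out. The paper's Claim 1 in \S\ref{sec:admissible} handles it abstractly: after passing to $\ZZ^n/L_k$ it observes that the facet is a translate of a full-dimensional lattice zonotope inside $(L_k)_\RR$, and then invokes Proposition \ref{prop:vol_lat_shifted} (which rests on Shephard's tiling theorem, Proposition \ref{prop:mcmullen}) to conclude that any such translate contains a lattice point. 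You instead exhibit a concrete lattice point on $F_{\id,k}^+$ by balancing $\lceil p_I\rceil$'s and $\lfloor p_I\rfloor$'s among the coordinates in $I$, and verify membership by checking the permutahedral facet inequalities $|\sum_{j\in J}\delta_j|\le |J|(k-|J|)$ directly via the chain $|sk-jr|\le r(k-r)\le mk(k-1)/2\le mkj(k-j)/2$. Your route is more elementary and self-contained (no appeal to the volume/tiling machinery of the appendix), at the cost of a slightly fiddlier computation; the paper's route is shorter given that Proposition \ref{prop:vol_lat_shifted} is needed elsewhere anyway. Both are valid proofs.
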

In this note we prove the following result.
\begin{proposition} \label{prop:lattice} (see \S\ref{sec:tiling})
Assume $\tau$ is admissible. Let $L=(mn+1)\ZZ^n+\ZZ\nu$. Then
\begin{equation}
  \label{eq:tiling}
    \Delta^{m,n}_\tau\cap \ZZ^n\r \ZZ^n/L:a\mapsto \bar{a}
  \end{equation}
 is an $S_n$-equivariant bijection.
\end{proposition}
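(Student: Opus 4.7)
The map is visibly $S_n$-equivariant since $\Delta^{m,n}$, $\nu$, and $L = (mn+1)\ZZ^n + \ZZ\nu$ are each preserved by coordinate permutations. The content is therefore bijectivity, which I would obtain from a volume count together with a tiling argument.

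For the cardinality of the target, the natural surjection $\ZZ^n \twoheadrightarrow (\ZZ/(mn+1))^n$ sends $L$ onto the subgroup generated by $\bar\nu = (1,\ldots,1)$, which has order $mn+1$; hence $|\ZZ^n/L| = (mn+1)^{n-1}$. For the source I would compute $\Vol(\Delta^{m,n})$. The Minkowski sum $[0, \tfrac{m}{2}(e_i-e_j)] + [0, \tfrac{m}{2}(e_j - e_i)]$ is a translate of an $m$-fold Minkowski sum of $[0, e_i - e_j]$, so up to a global translation $\Delta^{m,n}$ coincides with the image under the projection $\pi : \RR^{n+1} \to \RR^n$ (dropping the $0$-th coordinate) of the graphical zonotope $Z(\tilde G_n)$ of the multigraph $\tilde G_n$ on vertices $\{0, 1, \ldots, n\}$ with one edge $\{0, i\}$ for each $i \ge 1$ and $m$ parallel edges $\{i, j\}$ for each $1 \le i < j \le n$. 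Restricted to the hyperplane $H_0 = \{\sum_v x_v = 0\}$ the map $\pi$ is a linear isomorphism, and the matrix-tree theorem gives $\Vol(\Delta^{m,n}) = \tau(\tilde G_n)$; the reduced Laplacian obtained by deleting the row and column of vertex $0$ equals $(1 + mn) I_n - m J_n$, with eigenvalues $1$ (simple) and $mn+1$ (multiplicity $n-1$), so $\Vol(\Delta^{m,n}) = (mn+1)^{n-1}$, matching the covolume of $L$.

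The main step is then to show that the $L$-translates $\{\Delta^{m,n}_\tau + \ell\}_{\ell \in L}$ tile $\RR^n$. Since the volumes agree, tiling reduces to the packing statement $L \cap \operatorname{int}(\Delta^{m,n} - \Delta^{m,n}) = \{0\}$. The difference body is a centered zonotope with support function $h(u) = \sum_i |u_i| + m \sum_{i<j} |u_i - u_j|$, and its facet inequalities are indexed by the cocircuits (equivalently, bipartitions $V = A \sqcup B$) of the graphic matroid of $\tilde G_n$. For any nonzero $\ell = (mn+1) v + k \nu \in L$ with $v \in \ZZ^n$, $k \in \ZZ$, I would select a facet normal adapted to the signs of the coordinates of $\ell$ and verify, using integrality and the explicit form of $h$, that at least one facet inequality is saturated or violated. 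The case analysis over subsets $A$ and sign patterns is the main technical obstacle; the quantitative bounds needed should reduce to elementary estimates involving $|A|$, $v$ and $k$.

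Granting tiling, Lemma \ref{lem:admissible} completes the argument: admissibility of $\tau$ gives $\partial \Delta^{m,n}_\tau \cap \ZZ^n = \emptyset$, hence $\partial(\Delta^{m,n}_\tau + \ell) \cap \ZZ^n = \emptyset$ for every $\ell \in L$, so every point of $\ZZ^n$ lies in the interior of exactly one $L$-translate. For each coset $c \in \ZZ^n/L$, lifting to some $\tilde a \in \ZZ^n$ and subtracting the unique $\ell \in L$ with $\tilde a \in \operatorname{int}(\Delta^{m,n}_\tau + \ell)$ produces the preimage $a := \tilde a - \ell \in \Delta^{m,n}_\tau \cap \ZZ^n$ of $c$. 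This gives a two-sided inverse to \eqref{eq:tiling}, and its $S_n$-equivariance is automatic from the construction.
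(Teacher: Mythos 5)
Your route differs essentially from the paper's. The paper obtains the bijection from Lemma~\ref{cor:tiling} combined with Proposition~\ref{lem:tiling}, and the latter is proved cheaply: by McMullen's criterion (Proposition~\ref{prop:tiling}), the space-tiling property depends only on which hyperplanes are spanned by the defining vectors, not on their lengths, so $\Delta^{m,n}_\tau$ inherits the property from the permutahedron, whose defining vectors are all parallel to those of $\Delta^{m,n}_\tau$; the tiling lattice is then read off directly from the facet data~\eqref{eq:facets}. You instead try to establish tiling from scratch via the mechanism ``volume equals covolume, plus lattice packing, implies tiling.'' Your auxiliary computations are correct: $|\ZZ^n/L|=(mn+1)^{n-1}$, and $\Vol(\Delta^{m,n})=(mn+1)^{n-1}$ by Kirchhoff's theorem applied to the graph on $\{0,\ldots,n\}$ with one edge $\{0,i\}$ and $m$ edges $\{i,j\}$ (the same graph $G$ and the same spanning-tree count appear in the paper's Appendix~A). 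The final deduction of the bijection from tiling together with admissibility is also sound, and it is essentially a rederivation of Lemma~\ref{cor:tiling}.

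However, the packing statement $L\cap\operatorname{int}(\Delta^{m,n}-\Delta^{m,n})=\{0\}$, which is the load-bearing step of your strategy, is only sketched. You write down the support function $h(u)=\sum_i|u_i|+m\sum_{i<j}|u_i-u_j|$ and say you would, for each nonzero $\ell=(mn+1)v+k\nu\in L$, select a facet normal adapted to the sign pattern of $\ell$ and verify that some facet inequality of the difference body is saturated or violated, but you explicitly defer this case analysis as ``the main technical obstacle.'' That leaves a genuine gap: without the packing estimate, the tiling claim, and hence the bijection, is not established. The advantage of the paper's route through McMullen's criterion is precisely that one never has to verify packing by hand --- once $\Delta^{m,n}_\tau$ is recognized as combinatorially a permutahedron, tiling comes for free, and only the identification of the tiling lattice requires computation. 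If you want to salvage your approach, either carry out the sign-pattern case analysis in full, or replace it by the McMullen reduction, which is the paper's shortcut.
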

This result follows very quickly from the fact that $\Delta^{m,n}_\tau$ is equivalent, in a suitable sense, to the permutahedron and hence is 
space tiling, see Proposition \ref{lem:tiling}.

\medskip

Proposition \ref{prop:lattice} allows one to relate the lattice points in $\Delta^{m,n}_\tau$ for admissible $\tau$ to \emph{parking functions}.
Recall that an $(m,n)$-parking function is a sequence of natural numbers\footnote{We assume $0\in\NN$.} $a=(a_1,\dots,a_n)\in \NN^n$
such that its
weakly increasing rearrangement $a_{i_1}\leq a_{i_2}\leq\cdots\leq a_{i_n}$ satisfies $a_{i_j}\leq m(j-1)$. Note that $S_n$ acts on parking functions by permuting indices.
Below we denote the set of $(m,n)$-parking
functions by $\Qscr^m$. According to  \cite[NOTE in \S3]{Stanley} or \cite[\S5.1]{Athanasiadis} the map
\begin{equation}
  \label{eq:parking1}
  \Qscr^m\r \ZZ^n/L:a\mapsto \bar{a}
  \end{equation}
  is an $S_n$-equivariant bijection.   Combining Proposition \ref{prop:lattice} with \eqref{eq:parking1} yields:
  \begin{corollary} If $\tau$ is admissible then there is an explicit $S_n$-equivariant bijection between lattice points in $\Delta^{m,n}_\tau$ and $(m,n)$-parking functions.
\end{corollary}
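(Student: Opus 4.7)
The plan is to simply compose the two $S_n$-equivariant bijections already in hand: the one given by Proposition \ref{prop:lattice},
\[
\Phi\colon \Delta^{m,n}_\tau\cap \ZZ^n \longrightarrow \ZZ^n/L,\qquad a\mapsto \bar a,
\]
and the one recalled in \eqref{eq:parking1},
\[
\Psi\colon \Qscr^m \longrightarrow \ZZ^n/L,\qquad a\mapsto \bar a.
\]
Both maps are $S_n$-equivariant bijections with the same target, so $\Psi^{-1}\circ \Phi$ is an $S_n$-equivariant bijection $\Delta^{m,n}_\tau\cap \ZZ^n \to \Qscr^m$.

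To make the bijection \emph{explicit} (as the corollary claims), I would spell out the description of both maps: each is the restriction of the quotient map $\ZZ^n \to \ZZ^n/L$ to a set of canonical representatives. So concretely, given $a\in \Delta^{m,n}_\tau\cap \ZZ^n$, one associates to it the unique $(m,n)$-parking function $b\in \Qscr^m$ with $b\equiv a \pmod L$, and conversely. I would briefly remark that once we know both $\Phi$ and $\Psi$ are bijections to the same finite set, this assignment is automatically well-defined and bijective; equivariance is immediate from the fact that both maps are equivariant and the $S_n$-action on $\ZZ^n/L$ is the one induced from the standard one on $\ZZ^n$.

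There is really no obstacle here: the content of the corollary is entirely concentrated in Proposition \ref{prop:lattice}, with the bijection \eqref{eq:parking1} being cited from the literature. The only thing worth checking is that the $S_n$-actions used in the two bijections are indeed the same (namely, the permutation action on $\ZZ^n$), which is clear from the set-up. Thus the proof reduces to one sentence: combine Proposition \ref{prop:lattice} with \eqref{eq:parking1}.
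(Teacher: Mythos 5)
Your proof is correct and is exactly the paper's argument: the corollary is stated immediately after the sentence ``Combining Proposition \ref{prop:lattice} with \eqref{eq:parking1} yields,'' so the intended proof is precisely the composition $\Psi^{-1}\circ\Phi$ of the two $S_n$-equivariant bijections. Nothing is missing.
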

If $a=(a_1,\dots,a_n)\in \NN^n$ is weakly increasing then we say that $a$ is an $(m,n)$-\emph{Dyck path} if $a_j\le (m-1)(j-1)$. The number of $(m,n)$-Dyck
paths is
\begin{equation}
  \label{eq:dyckformula}
  A_n(m,1):=\frac{1}{mn+1}{mn+1\choose n}=\frac{1}{(m-1)n+1}{mn \choose n}
\end{equation}
and is called the $(m,n)$-\emph{Fuss-Catalan number}.
The following is clear:
\begin{lemma} There is a bijection between regular orbits of $(m,n)$-parking functions and $(m,n)$-Dyck paths which sends the orbit representative
  $a=(a_1,\ldots,a_n)\in \NN^n$ with $a_1<\ldots<a_n$ to $(a_1,a_2-1,\ldots,a_n-(n-1))$.
\end{lemma}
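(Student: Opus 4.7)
The plan is to give the (essentially routine) verification suggested by the words ``The following is clear''. The regular $S_n$-orbits on $\Qscr^m$ are exactly those in which all entries are distinct, and each such orbit has a unique representative $a=(a_1,\ldots,a_n)$ with $a_1<a_2<\cdots<a_n$, obtained by ordering the entries. So it suffices to exhibit a bijection between strictly increasing parking functions and Dyck paths.

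Define $\phi(a_1,\ldots,a_n)=(a_1,a_2-1,\ldots,a_n-(n-1))$ and, going the other way, $\psi(b_1,\ldots,b_n)=(b_1,b_2+1,\ldots,b_n+(n-1))$. These are manifestly inverse to each other as self-maps of $\ZZ^n$, so the only thing to check is that $\phi$ carries the set of strictly increasing $(m,n)$-parking functions onto the set of $(m,n)$-Dyck paths, and that $\psi$ carries the latter into the former.

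If $a\in\NN^n$ satisfies $0\le a_1<a_2<\cdots<a_n$ and $a_j\le m(j-1)$, set $b_j=a_j-(j-1)$. Then $b_1=a_1\ge 0$, the inequality $a_{j+1}>a_j$ together with integrality gives $b_{j+1}=a_{j+1}-j\ge a_j-(j-1)=b_j$, and $b_j\le m(j-1)-(j-1)=(m-1)(j-1)$, so $b$ is a weakly increasing $(m,n)$-Dyck path. Conversely, given $b$ with $0\le b_1\le\cdots\le b_n$ and $b_j\le(m-1)(j-1)$, set $a_j=b_j+(j-1)$; then $a_{j+1}-a_j=b_{j+1}-b_j+1\ge 1$ so the $a_j$ are strictly increasing in $\NN$, and $a_j\le(m-1)(j-1)+(j-1)=m(j-1)$.

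There is essentially no obstacle here — the content is just the observation that subtracting the ``staircase'' $(0,1,\ldots,n-1)$ turns the strict-inequality condition into the weak one and simultaneously absorbs one factor of $j-1$ in the upper bound, converting $m(j-1)$ into $(m-1)(j-1)$.
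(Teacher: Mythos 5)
Your verification is correct and is precisely the routine check the paper elides with ``The following is clear'': you identify regular orbits with their strictly increasing representatives and show that subtracting the staircase $(0,1,\ldots,n-1)$ is a bijection onto weakly increasing sequences with $b_j\le(m-1)(j-1)$. Nothing further to compare, since the paper gives no proof.
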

We thus obtain a new interpretation of the Fuss-Catalan numbers.
\begin{corollary} \label{cor:cor1}
  If $\tau$ is admissible then there is an explicit $S_n$-equivariant bijection between regular $S_n$-orbits in $\Delta^{m,n}_\tau\cap \ZZ^n$ and $(m,n)$-Dyck paths. In particular the number of such regular orbits is $A_n(m,1)$.
\end{corollary}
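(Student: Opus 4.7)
The plan is to simply chain together the three bijections that have already been established (or are stated immediately before) in the excerpt, observing at each step that $S_n$-equivariance is preserved. First, I would compose the $S_n$-equivariant bijection
\[
\Delta^{m,n}_\tau\cap \ZZ^n \;\xrightarrow{\;\sim\;}\; \ZZ^n/L
\]
of Proposition \ref{prop:lattice} with the inverse of the $S_n$-equivariant bijection
\[
\Qscr^m \;\xrightarrow{\;\sim\;}\; \ZZ^n/L
\]
recalled in \eqref{eq:parking1}. Because both are $S_n$-equivariant, the composition is an $S_n$-equivariant bijection $\Delta^{m,n}_\tau\cap \ZZ^n \xrightarrow{\sim} \Qscr^m$.

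Next, I would use the elementary fact that an $S_n$-equivariant bijection sends orbits to orbits and preserves stabilizers; in particular, regular orbits (those of cardinality $n!$) go to regular orbits. Thus the previous display restricts to an $S_n$-equivariant bijection between the regular $S_n$-orbits in $\Delta^{m,n}_\tau\cap \ZZ^n$ and the regular $S_n$-orbits in $\Qscr^m$. By the lemma immediately preceding the statement, the latter are in explicit bijection with $(m,n)$-Dyck paths via the map sending the strictly increasing orbit representative $(a_1,\ldots,a_n)$ to $(a_1,a_2-1,\ldots,a_n-(n-1))$. Composing once more gives the required explicit bijection.

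Finally, the numerical claim is an immediate consequence: the number of $(m,n)$-Dyck paths equals $A_n(m,1)$ by the formula \eqref{eq:dyckformula}. There is essentially no obstacle here beyond bookkeeping — all the substantive content (the space-tiling property of $\Delta^{m,n}_\tau$, the parking function bijection of Stanley/Athanasiadis, and the Dyck path reduction) has been done or cited earlier; the only thing to verify is that the equivariance passes through the composition, which is formal.
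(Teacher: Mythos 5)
Your proof is correct and follows exactly the same chain of reasoning the paper intends: compose the bijection of Proposition~\ref{prop:lattice} with the inverse of \eqref{eq:parking1} to match lattice points with parking functions equivariantly, pass to regular orbits, and then apply the lemma reducing regular orbits of parking functions to Dyck paths, with the count given by \eqref{eq:dyckformula}. This is precisely the paper's route, so there is nothing to add.
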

The claim in Corollary \ref{cor:cor1} concerning $A_n(m,1)$ was first observed by us as a consequence of the properties of the ``noncommutative Hilbert scheme''.
This is explained in \S\ref{sec:Hmn} below.

\medskip

In an appendix we will also give a second combinatorial proof of the claim about $A_n(m,1)$. The basic idea is that if $\tau$ is admissible then,
since $\Delta^{m,n}_\tau$ is a zonotope such that $\partial(\Delta^{m,n}_\tau) \cap \ZZ^n=\emptyset$, we have 
$|\Delta^{m,n}_\tau\cap \ZZ^n|=\Vol(\Delta^{m,n}_\tau)$ by Proposition \ref{prop:vol_lat_shifted} and moreover there is an explicit formula for $\Vol(\Delta^{m,n}_\tau)$
in terms of spanning trees in a suitable graph (see \S8).
Using an appropriate inclusion/exclusion argument we may upgrade this to a count of the regular orbits in $\Delta^{m,n}_\tau\cap \ZZ^n$.
\subsection{The noncommutative Hilbert scheme}
\label{sec:Hmn}
The Hilbert scheme of length $n$-sheaves on $\AA^n$
may be viewed as the moduli space of cyclic modules of dimension~$n$ over the polynomial algebra $\CC[x_1,\ldots,x_m]$. It is then natural to define
the corresponding \emph{noncommutative Hilbert scheme} $H_{m,n}$ as  the moduli space of cyclic modules of dimension $n$ over the free algebra $\CC\langle x_1,\ldots,x_m\rangle$. We recall:
\begin{proposition}[\protect{\cite{Reineke,MR1048420}}] \label{prop:strata}
  $H_{m,n}$ has a stratification consisting of affine spaces and the number of strata is given by
  the Fuss-Catalan number $A_n(m,1)$.
\end{proposition}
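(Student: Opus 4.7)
The plan is to produce the stratification explicitly using an echelon-form (Gr\"obner-like) normal form for cyclic modules, and to enumerate the strata using the known combinatorics of rooted subtrees of the infinite $m$-ary tree. Realize $H_{m,n}=\tilde W/\GL(V)$ where $V=\CC^n$ and $\tilde W\subset\End(V)^m\times V$ is the open locus of cyclic tuples $(X_1,\ldots,X_m,v)$; this quotient is smooth and quasi-projective since $\GL(V)$ acts freely. Totally order the free monoid $F=\{x_1,\ldots,x_m\}^\ast$ by length first, breaking ties lexicographically, and write $w\cdot v$ for the action of $w\in F$ on $v$ through the $X_i$'s. For $(X,v)\in\tilde W$ define its \emph{shape} $S(X,v)\subset F$ to be the set of $w\in F$ for which $w\cdot v$ is linearly independent from $\{w'\cdot v:w'<w\}$. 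One checks that $S(X,v)$ has cardinality $n$, contains the empty word, and is closed under removing the leading letter of any of its elements --- equivalently, it forms a rooted subtree of the infinite complete $m$-ary tree. The shape is $\GL(V)$-invariant, so descends to a well-defined map on $H_{m,n}$, and for each such $S$ I set $U_S=\{[(X,v)]:S(X,v)=S\}$.

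Choosing the distinguished basis $\{w\cdot v\}_{w\in S}$ of $V$, a point of $U_S$ is specified by the data
\[
X_i(w\cdot v)=\begin{cases}(x_iw)\cdot v & \text{if } x_iw\in S,\\[2pt]
\displaystyle \sum_{\substack{w'\in S\\ w'<x_iw}} c^{(i,w)}_{w'}\, w'\cdot v & \text{if } x_iw\notin S.\end{cases}
\]
The coefficients $c^{(i,w)}_{w'}$ are a priori free parameters, and I claim that the assignment $(c)\mapsto[(X,v)]$ produces an isomorphism $U_S\cong \AA^{N(S)}$, where $N(S)$ is the total number of coefficients. Surjectivity is built in; injectivity follows because the subtree property of $S$ forces $w\cdot v$ to equal the prescribed basis vector for every $w\in S$, regardless of the $c$'s, so the basis used to define the coordinates is canonical.

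Finally, rooted subtrees with $n$ nodes of the infinite complete $m$-ary tree are in explicit bijection with $(m,n)$-Dyck paths --- record depths in depth-first order and subtract the staircase $(0,1,\ldots,n-1)$ --- so they are counted by the Fuss-Catalan number $A_n(m,1)$ of~\eqref{eq:dyckformula}. The main obstacle is the claim that every choice of coefficients in the display above yields a cyclic module whose shape is \emph{exactly}~$S$, i.e.\ that no unexpected linear relations among the $\{w\cdot v\}_{w\in S}$ arise and no element of $F\setminus S$ suddenly becomes a pivot; this requires a careful induction on the length of $w$ showing that the echelon coordinates are compatible with the length-then-lex order, and is the technical heart of the argument. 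A conceptually cleaner alternative is to realize the same decomposition via Bia{\l}ynicki-Birula for a generic $\CC^\ast$-action scaling the $X_i$'s, using the filtration $V_k=\sum_{j\le k}\operatorname{span}(X_{i_1}\cdots X_{i_j}v)$ to exhibit the required limits in the non-projective variety $H_{m,n}$.
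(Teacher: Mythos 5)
The paper does not prove this proposition; it is quoted from \cite{Reineke,MR1048420}, and your argument is essentially a reconstruction of the echelon--form (``staircase''/``type'') cell decomposition used in those references (Reineke also offers the Bia{\l}ynicki--Birula packaging you mention). So the approach matches the cited sources.

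Two issues are worth flagging. First, the explicit bijection you describe between rooted $n$-node subtrees of the complete infinite $m$-ary tree and $(m,n)$-Dyck paths --- ``record depths in depth-first order and subtract the staircase $(0,1,\ldots,n-1)$'' --- does not work. It is not injective: for $m=2$, $n=2$ both one-child subtrees (left child vs.\ right child) produce the depth sequence $(0,1)$, and in general the depth sequence in preorder ignores which of the $m$ ordered slots a child occupies. Moreover, after subtracting the staircase the resulting sequence is weakly \emph{decreasing}, not weakly increasing, so it is not even of the right shape. The enumerative identity $|\{\text{rooted }m\text{-ary subtrees with }n\text{ nodes}\}| = A_n(m,1)$ is of course classical, so your count of strata is still correct, but you should either cite that identity directly or replace the stated bijection with a correct one (e.g.\ the usual encoding of $m$-ary trees by {\L}ukasiewicz-type paths).

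Second, you correctly isolate the technical heart --- that the shape of $(X,v)$ built from an arbitrary choice of echelon coefficients is \emph{exactly} $S$ --- but you also omit the verification that the pieces $U_S$ actually form a stratification, i.e.\ that $\overline{U_S}$ is a union of cells. Both points are settled by the same induction on the length-then-lex order: prepending $x_i$ is order-preserving, so one shows by induction on $w$ that for $w\notin S$, $w\cdot v$ lies in the span of $\{w'\cdot v : w'\in S,\ w'<w\}$, and that the shape map is upper semicontinuous for this order. Since that induction is the exact mechanism in Reineke's and Van den Bergh's proofs, once you carry it out your argument agrees with the cited one.
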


It is clear that $H_{m,n}$ can be described as the moduli space of stable (or equivalently semi-stable) representations with dimension vector $(1,n)$ and stability condition $(-n,1)$ \cite[Definition 1.1]{King}
of the following quiver $Q_{m,n}$:
\[
  \begin{tikzcd}
    \bullet \arrow[rr] &&\bullet \arrow[out=60,in=120,loop,"m"']\arrow[out=240,in=300,loop,"1"']\arrow[out=-30,in=30,loop,dotted]
  \end{tikzcd}
\]
It follows from loc.\ cit.\ that $H_{m,n}$ can also be described as a
GIT quotient for the group
$(\CC^\ast \times \GL_n(\CC))/\{\text{center}\}\cong
\GL_n(\CC)$. More precisely we get $H_{m,n}=W^{ss,\chi}/G$ where
$G=\GL_n(\CC)$, $W=\End(\CC^n)^{\oplus m}\oplus \CC^n$ and
$W^{ss,\chi}\subset W$ is the semi-stable locus associated to the 
determinant character $\chi$.

\medskip

Using the GIT description $H_{m,n}$ we will show using \cite{HLSam,SVdB} that $H_{m,n}$ admits a family of tilting bundles. Let $\Delta^{m,n}_\tau\subset \RR^n$
be as in \S\ref{sec:combinat}. We identify $\ZZ^n$ with the character group of the diagonal torus $(\CC^\ast)^n$ in $\GL_n(\CC)$.
Let $(\ZZ^n)^+$ be the ``dominant'' part of $\ZZ^n$, i.e.\ those $(c_1,\ldots,c_n)\in \ZZ^n$ such that $c_1\ge \cdots \ge c_n$.
For $\xi\in (\ZZ^n)^+$ let $V(\xi)$ be the irreducible $\GL_n(\CC)$ representation with highest weight~$\xi$ and let $\Vscr(\xi)$ be the equivariant
vector bundle on $H_{m,n}$ corresponding to the $\GL_n(\CC)$-equivariant vector bundle $V(\xi)\otimes_k \Oscr_{W^{ss,\chi}}$ on $\Oscr_{W^{ss,\chi}}$. Put
\[
\hat{\rho}=\frac{1}{2} \sum_{i<j} (e_i-e_j)+\frac{1}{2}(n-1)\nu=(n-1,n-2,\ldots,1,0).
\]

\begin{proposition}(see \S\ref{sec:tilting}) \label{prop:tilting} 
  Let $\tau$ be admissible. 
  Then
  \begin{equation}\label{eq:tilting}
   \Tscr_\tau:= \bigoplus_{\xi \in (\ZZ^n)^+\cap (\Delta^{m,n}_{\tau}-\hat{\rho})} \Vscr(\xi)
  \end{equation}
  is a tilting bundle on $H_{m,n}$.
\end{proposition}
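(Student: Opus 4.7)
The plan is to apply the tilting-bundle criterion for GIT quotients established by Halpern-Leistner-Sam \cite{HLSam} and \v{S}penko-Van den Bergh \cite{SVdB}. Recall the setup $H_{m,n} = W^{ss,\chi}/G$ with $G = \GL_n(\CC)$, $W = \End(\CC^n)^{\oplus m} \oplus \CC^n$, and $\chi$ the determinant character. Under the diagonal torus $(\CC^\ast)^n \subset G$ the weights of $W$ are $e_i - e_j$ with multiplicity $m$ (coming from $\End(\CC^n)^{\oplus m}$) and $e_1, \ldots, e_n$ each with multiplicity one (from $\CC^n$), while the weights of $\mathfrak g$ are $e_i - e_j$ for $i \neq j$ together with the zero weight of multiplicity $n$. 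First I would record the standard facts that $H_{m,n}$ is smooth with $W^{ss,\chi} = W^{s,\chi}$ and with free $G$-action on the stable locus, so that each $G$-equivariant bundle $V(\xi) \otimes \Oscr_{W^{ss,\chi}}$ descends to a genuine vector bundle $\Vscr(\xi)$ on $H_{m,n}$.

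The heart of the argument is the identification of $(\Delta^{m,n}_\tau - \hat\rho) \cap (\ZZ^n)^+$ with the set of dominant weights satisfying the grade-restriction (``magic window'') condition of \cite{HLSam,SVdB}. For each one-parameter subgroup $\lambda$ of $G$ destabilizing some Kempf-Ness stratum of $W^{us,\chi}$, the criterion requires $\langle \lambda, \xi + \hat\rho \rangle$ to lie in an interval whose endpoints are determined by the $\lambda$-weights of the tangent directions along that stratum; intersecting over all destabilizing $\lambda$ cuts out a polytope in the weight lattice. The expected match with $\Delta^{m,n}_\tau$ is natural on the level of generators: the edges $[0, \tfrac{m}{2}(e_i - e_j)]$ reflect the $m$-fold multiplicity of the root weights in $\End(\CC^n)^{\oplus m}$, while the edges $[0, e_i]$ reflect the framing summand $\CC^n$. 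Correspondingly the shift $\hat\rho = \tfrac{1}{2}\sum_{i<j}(e_i - e_j) + \tfrac{1}{2}(n-1)\nu$ decomposes as the Weyl half-sum of positive roots of $\GL_n$ plus a framing correction.

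Finally, admissibility of $\tau$ enters through Lemma~\ref{lem:admissible}: it guarantees $\partial(\Delta^{m,n}_\tau) \cap \ZZ^n = \emptyset$, so no dominant weight $\xi$ has $\xi + \hat\rho$ sitting on the boundary of the window. Boundary weights are precisely the ambiguous cases where the higher $\Ext$-vanishing between summands is not forced by the HLSam/SVdB criterion, so avoiding them is what upgrades a would-be generator to a bona fide tilting bundle. The main obstacle will be the bookkeeping in the middle paragraph, where one must verify that the complete family of Kempf-Ness destabilizing one-parameter subgroups for the quiver $Q_{m,n}$ with dimension vector $(1,n)$ and determinantal stability produces exactly the half-space inequalities defining the zonotope $\Delta^{m,n}_\tau$.
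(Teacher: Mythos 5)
Your proposal applies the Halpern--Leistner--Sam window criterion directly to the GIT presentation $H_{m,n}=W^{ss,\chi}/G$ with $W=\End(\CC^n)^{\oplus m}\oplus\CC^n$, but this is exactly the route that does not work: the representation $W$ is not quasi-symmetric (the weight $e_i$ from the framing factor $\CC^n$ has no companion $-e_i$, so the weights on the line $\RR e_i$ do not sum to zero), and both \cite{HLSam} and \cite{SVdB} require (quasi-)symmetry for their tilting criterion. This is not a bookkeeping issue at the Kempf--Ness level, as your last paragraph suggests; the hypotheses of the theorem you want to cite simply fail for $(W,G)$. Your observation that the edges $[0,e_i]$ of $\Delta^{m,n}_\tau$ ``reflect the framing summand $\CC^n$'' should already raise a red flag: the magic-window zonotope built from $W$ alone would contribute only a half-length interval from the single $e_i$ weight, not the interval $[0,e_i]$ appearing in the definition of $\Delta^{m,n}_\tau$.

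The paper circumvents this by replacing $Q_{m,n}$ with the symmetric quiver $\tilde Q_{m,n}$ obtained by adjoining the opposite arrow, so that $\tilde W=\CC^n\oplus(\CC^n)^\ast\oplus M_n(\CC)^{\oplus m}$ is a symmetric representation and $\Delta^{m,n}_\tau-\hat\rho$ matches the HL--Sam window $(1/2)\bar\Sigma-\rho+\tau'\nu$ for $(\tilde W,G)$ (this is where the pairs $\pm e_i$ genuinely combine to give the full intervals $[0,e_i]$). One checks that semistability of a $\tilde Q_{m,n}$-representation only depends on its restriction to $Q_{m,n}$, so $\pi:\tilde H_{m,n}\to H_{m,n}$ is a vector bundle, hence has a section. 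The final ingredient is a descent lemma: if $\pi:\tilde X\to X$ has a section and $L\pi^\ast\Tscr$ is tilting on $\tilde X$, then $\Tscr$ is tilting on $X$ (generation via $R\pi_\ast Ri_\ast$ and $\Ext$-vanishing via the split unit $\Tscr\to R\pi_\ast L\pi^\ast\Tscr$). To repair your argument you would need to add the symmetrization step and this descent lemma; without them the invocation of the HL--Sam criterion in your first and second paragraphs is unjustified.
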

We refer the reader to Appendix \ref{sec:tables} for tables with tilting bundles on $H_{2,2}$, $H_{3,2}$ and $H_{4,2}$. 

Comparing the ranks of $K_0(H_{m,n})$ obtained from Propositions \ref{prop:strata} and \ref{prop:tilting} yields the identity
\begin{equation}
  \label{eq:comparing}
|  (\ZZ^n)^+\cap (\Delta^{m,n}_{\tau}-\hat{\rho})|=A_n(m,1).
  \end{equation}
  Sending $a\mapsto a+\hat{\rho}$ defines a bijection between $(\ZZ^n)^+\cap (\Delta^{m,n}_{\tau}-\hat{\rho})$ and the regular orbits in $\ZZ^n\cap \Delta^{m,n}_\tau$.
This  yields a ``geometric'' proof of the claim about $A_n(m,1)$ in Corollary \ref{cor:cor1}.

\subsection{A semi-orthogonal decomposition of the non-commutative Hilbert scheme}
We will use the tilting bundles defined in \eqref{eq:tilting} to obtain an interesting side result on the structure of $\Dscr(H_{m,n}):=D_{\Qch}(H_{m,n})$.
Recall that if $X$ is a noetherian scheme and $\Ascr$ is a sheaf of Azumaya algebras on $X$ of rank $n^2$ then the Brauer-Severi scheme $Y=\BS(\Ascr,X)$ is defined as the moduli space of
left ideals of codimension $n$ in $\Ascr$ (using the opposite convention from \cite[\S8.4]{Quillen}).
The definition of the Brauer-Severi variety was extended to singular
schemes in \cite{ArtinMumford} and in \cite{MR1048420}  (see also
\cite{MR1684876,MR1407874}) it was shown
that $H_{m,n}$ is the Brauer-Severi scheme of the so-called ``trace
ring of $m$ generic $n\times n$-matrices $\TT_{m,n}$''.

Trace rings were studied by Artin \cite{ArtinAzumaya} and Procesi \cite{Proc}, we refer the reader to \cite[\S1.4.5]{SVdB} for a short introduction. Recall that the commutative trace ring $Z_{m,n}$ of
$m$ generic $n\times n$-matrices is equal to $\Gamma(W_0)^G$ with $W_0=M_n(\CC)^{\oplus m}$ and $G=\GL_n(\CC)$, acting by conjugation. The noncommutative trace ring $\TT_{m,n}$ is the
$Z_{m,n}$-\emph{algebra of covariants} \cite[\S4.1]{SVdB} $M(M_n(\CC)):=(M_n(\CC)\otimes \Gamma(W_0))^G$. It follows from the definitions that $H_{m,n}\r \Spec Z_{m,n}$ is a standard Brauer-Severi scheme when restricted to the Azumaya locus
of $\TT_{m,n}$. This locus is non-empty and dense when $m\ge 2$.

If $\xi\in \ZZ^n$ is a weight for $G$ then we define its \emph{color} as $c(\xi)=\sum_i \xi_i$; i.e.\ it is the weight of $\xi$ when restricted to the center of $G$.
For $c\in \ZZ$ we put
\begin{equation}
  \label{eq:Vc}
      V_\tau(c):= \bigoplus_{\xi \in (\ZZ^n)^+\cap (\Delta^{m,n}_{\tau}-\hat{\rho}),c(\xi)=c} V(\xi)
  \end{equation}
  \begin{proposition} \label{prop:sod}
    Assume that $\tau$ is admissible and $m\ge 2$. There is a semi-orthogonal decomposition (depending on $\tau$) 
  \begin{equation}
    \label{eq:sod}
    \Dscr(H_{m,n})=\langle D(R_{\tau,u}),\ldots, D(R_{\tau,u+n-1}) \rangle
  \end{equation}
  linear over $Z_{m,n}$ with $u=\lceil n\tau\rceil-n(n-1)/2$ such that
  \[
    R_{\tau,c}=M(\End(V_\tau(c))).
  \]
  The restriction of $R_{\tau,c}$  to the Azumaya locus of $\TT_{m,n}$ is Morita equivalent to the restriction of $\TT_{m,n}^{\otimes c}$.
\end{proposition}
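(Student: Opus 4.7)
The plan is to extract the semi-orthogonal decomposition from the tilting bundle $\Tscr_\tau$ of Proposition \ref{prop:tilting} by exploiting its natural decomposition by color. First I would verify that the colors $c(\xi)$ of the weights $\xi\in(\ZZ^n)^+\cap(\Delta^{m,n}_\tau-\hat\rho)$ occupy exactly the $n$ consecutive integers $u,u+1,\ldots,u+n-1$. Since the zonotope generators $e_i-e_j$ contribute zero color while each $e_i$ contributes color $1$, points of $\Delta^{m,n}$ have color in $[0,n]$; after translation by $\tau\nu$ and $-\hat\rho$, the color range becomes $[n\tau-n(n-1)/2,\,n\tau+n-n(n-1)/2]$. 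Admissibility (taking denominator $1$ in Definition \ref{def:admissible}) forces $n\tau\notin\ZZ$, so the integer colors are precisely $u,u+1,\ldots,u+n-1$ with $u=\lceil n\tau\rceil-n(n-1)/2$. This yields $\Tscr_\tau=\bigoplus_{c=u}^{u+n-1}\Vscr_\tau(c)$, where $\Vscr_\tau(c)$ denotes the direct sum of the $\Vscr(\xi)$ with $c(\xi)=c$.

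Second, I would use the tilting equivalence $\Dscr(H_{m,n})\simeq D(\End(\Tscr_\tau))$ to reduce the SOD to a ``block-triangularity'' statement on the endomorphism algebra, namely
\[
\Hom(\Vscr_\tau(c),\Vscr_\tau(c'))=0\qquad\text{for all } c>c'
\]
(higher Ext vanishes automatically by the tilting property). Granting this, the diagonal blocks identify with $\End(\Vscr_\tau(c))=R_{\tau,c}=M(\End(V_\tau(c)))$ via the standard correspondence between equivariant endomorphism algebras on GIT quotients and algebras of covariants over $\Spec Z_{m,n}=W_0/\!\!/G$; linearity over $Z_{m,n}$ is then automatic. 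For the Morita claim on the Azumaya locus, I would restrict to the Azumaya locus of $\TT_{m,n}$, where $H_{m,n}$ becomes a bona fide Brauer-Severi variety $\BS(\TT_{m,n})$ and Bernardara's SOD $\Dscr(\BS(\TT_{m,n}))=\langle D(\TT_{m,n}^{\otimes c})\rangle_c$ applies. Since $n$ consecutive integers exhaust the residues modulo $n$, which parametrize the Brauer classes of tensor powers of a rank-$n^2$ Azumaya algebra, componentwise comparison of the two SODs forces $R_{\tau,c}$ to be Morita equivalent to $\TT_{m,n}^{\otimes c}$ on this locus.

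The main obstacle will be the $\Hom$-vanishing in the second step. Writing
\[
\Hom(\Vscr_\tau(c),\Vscr_\tau(c')) = [V_\tau(c)^{*}\otimes V_\tau(c')\otimes\Gamma(W^{ss,\chi})^{(c-c')}]^{\PGL_n},
\]
with $(c-c')$ denoting the weight of the center $Z\subset G$, a naive weight argument fails because the localization implicit in $\Gamma(W^{ss,\chi})$ admits positive $Z$-weight contributions. I expect this vanishing to be established either (a) geometrically, by treating $H_{m,n}\to\Spec Z_{m,n}$ as a generalized Brauer-Severi fibration and reducing fiberwise to the classical fact $H^0(\PP^{n-1},\Oscr(k))=0$ for $k<0$, combined with a torsion-freeness argument to extend the vanishing from the Azumaya locus to all of $\Spec Z_{m,n}$; or (b) via the ``magic window'' machinery of \cite{HLSam,SVdB} underlying the construction of $\Tscr_\tau$, where the window is calibrated precisely so that its color-stratification automatically produces the required mutual vanishing.
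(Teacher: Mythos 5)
Your overall architecture matches the paper's: decompose $\Tscr_\tau$ by color, prove block-upper-triangularity of $\End(\Tscr_\tau)$, and identify the diagonal blocks with algebras of covariants. You also correctly identified the $\Hom$-vanishing as the crux and correctly guessed that option (b) — the window machinery of \cite{HLSam,SVdB} — is what does the job. This is exactly what Lemma \ref{lem:restriction} provides: it lets one compute $\Hom_{H_{m,n}}(\Vscr(\xi),\Vscr(\xi'))$ as $(\Hom(V(\xi),V(\xi'))\otimes\Gamma(W))^G$ \emph{on the stack $W/G$ before restricting to the semistable locus}, where the naive $Z(G)$-weight argument does work because the weights of $\Gamma(W)$ under the center are $\le 0$. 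So the ``localization admits positive $Z$-weight contributions'' obstacle you flagged is precisely what the window equivalence removes; you did not need to propose option (a) as an alternative.

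However there are two genuine gaps. First, the paper does \emph{not} derive the Morita statement by comparing with Bernardara's SOD, and your route there is not tight: two SODs of the same triangulated category need not match componentwise, and your appeal to ``residues mod $n$ parametrize Brauer classes'' does not by itself pin down which tensor power appears in which slot. (The paper even flags, in a Remark, that identifying its SOD with a rotation of Bernardara's requires additional work.) The paper instead proves the Morita claim directly (Corollary \ref{cor:restriction}): for any two nonzero $G$-representations $V,V'$ of the same color $c$ there is a $\bar G$-equivariant Morita context $\Hom(V,V')\otimes\Gamma(W_0)$, $\Hom(V',V)\otimes\Gamma(W_0)$ which becomes nondegenerate on the stable locus $W_0^s$ (using Lemma \ref{lem:inverse} that $W_0^s$ sits over the Azumaya locus and that stabilizers there are trivial); taking $V'=(\CC^n)^{\otimes c}$, whose covariant algebra is $\TT_{m,n}^{\otimes c}$, gives the claim. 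Second, you silently assume each color block $\Tscr_{\tau,c}$ is nonzero for $c\in\{u,\dots,u+n-1\}$. This is exactly where the hypothesis $m\ge 2$ enters, and it is the most technical part of Lemma \ref{lem:homs} (item (2)): one must exhibit $n$ weights in $(\ZZ^n)^+\cap(\Delta_\tau^{m,n}-\hat\rho)$ realizing all $n$ color classes, and the paper does this by a delicate interior-point argument that tracks these weights as $\tau$ crosses walls. Without it, the claim that the restriction of $R_{\tau,c}$ is Morita equivalent to $\TT_{m,n}^{\otimes c}$ could fail trivially if $R_{\tau,c}=0$. (Minor: your parenthetical ``taking denominator $1$'' is off — $n\tau\notin\ZZ$ comes from ruling out denominator $n$ in Definition \ref{def:admissible}, not denominator $1$.)
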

The proof is based on partitioning the tilting bundle $\Tscr_\tau$ into semi-orthogonal parts.
\begin{remark} The reader will guess that the restriction of \eqref{eq:sod} to the Azumaya locus of $\TT_{m,n}$ 
 is a rotation of the usual semi-orthogonal decomposition of a Brauer-Severi scheme (see \cite[Theorem 5.1]{MR2571702}). One may show that this guess is correct.
\end{remark}
\begin{remark} From \eqref{eq:sod} we get a $\tau$-dependent decomposition
  \[
    A_n(m,1)=\rk K_0(H_{m,n})=\sum_{c=u}^{u+n-1}  \rk K_0(R_{\tau,c}).
  \]
  It would be interesting to see if this decomposition can be made concrete using some of the many combinatorial interpretations of $A_n(m,1)$.
 \end{remark}

 \subsubsection{Related work}
In \cite{TudorToda} the authors construct a semi-orthogonal decomposition of $\Dscr(H_{3,n})$. Their decomposition is more refined as it also involves categories  
generated by suitable $\Vscr(\xi)$ on approprate smaller noncommutative Hilbert schemes.  Moreover, they construct a semi-orthogonal decomposition of the category of matrix factorisations on 
$H_{3,n}$ with a super-potential whose critical locus is the Hilbert scheme of points.

\section{Acknowledgement}
We thank Cesar Ceballos for very helpful discussions, in particular
for teaching us about parking functions which lead to the simple proof of Corollary \ref{cor:cor1}.
We further thank him for
making us aware of a plethora of incarnations and variations of Catalan numbers and for providing references.

While this work was in its final stages, the second author attended  Tudor P${\rm\breve{a}}$durariu's interesting lecture in the workshop ``Representation theory and flag or quiver varieties'' (Paris, June 2022) during which he reported on joint work with Yokinobu Toda in which, among others, a semi-orthogonal decomposition of $\Dscr(H_{3,n})$ is constructed.
We thank him for sharing this work with us \cite{TudorToda}.

\section{Zonotopes}
\subsection{Generalities}
\label{sec:generalities}
A zonotope $Z$ in a finite dimensional $\RR$-vector space $V$ is a subset of the form $t+\sum_{i=1}^u [\beta_i,\gamma_i]$. The
vectors $\gamma_i-\beta_i$ are the \emph{defining vectors} of the zonotope. The faces of a zonotope are easy to compute (see e.g. \cite[\S2]{McMullen}). They are of the
form
\begin{equation}
\label{eq:faces}
t+\sum_{\langle\lambda,\gamma_i-\beta_i\rangle>0}\gamma_i+\sum_{\langle\lambda,\gamma_i-\beta_i\rangle<0}\beta_i+
\sum_{\langle \lambda,\gamma_i-\beta_i\rangle=0} [\beta_i,\gamma_i]
\end{equation}
for $\lambda\in V^\ast$. In particular if $Z$ is full dimensional then the facets correspond to those~$\lambda$ such that the defining vectors in the kernel of $\lambda$ span a hyperplane in~$V$.
Note that the facets come in parallel pairs, and the companion to a facet corresponding to $\lambda$ is the one corresponding to $-\lambda$.
\begin{remark}
\label{rem:edges}
The defining vectors of a zonotope are of course not unique. However if we restrict to the
case that no two defining vectors are parallel then the defining vectors are unique - up to sign.
Indeed it follows from \eqref{eq:faces} that they are given by the edges of the zonotope.
We may always reduce to the case that no two defining vectors are parallel 
by taking the sum of all groups of parallel intervals. 
\end{remark}
For use below we define the \emph{tiling lattice} as the subgroup of $V$ spanned by the vectors
\begin{equation}
\label{eq:tiling_lattice}
t_\lambda=\sum_{\langle\lambda,\gamma_i-\beta_i\rangle>0}(\gamma_i-\beta_i)-\sum_{\langle\lambda,\gamma_i-\beta_i\rangle<0}(\gamma_i-\beta_i)
\end{equation}
where $\lambda$ runs through the $\lambda\in V^\ast$ defining facets (as explained above).
\subsection{Space tiling zonotopes}
Let $Z\subset V$ be a full dimensional zonotope where $\dim V=n$.
We say that $Z$ is \emph{space tiling} if there is a lattice $L\subset V$
such that $V=\bigcup_{l\in L}(l+Z)$ and such that 
$Z\cap (l+Z)$ for $l\in L$ is a face in both $Z$ and $l+Z$. It is easy to see that
$L$ must be equal to the tiling lattice of $Z$.

The following result gives an easy way of recognizing space tiling zonotopes.
\begin{proposition}[{\cite[\S2.II]{McMullen}}] \label{prop:tiling}
A full dimensional zonotope is space
  tiling if and only if every $n-2$-dimensional subspace of $V$
  spanned by the defining vectors is contained in $2$
  or $3$ (and not more) hyperplanes spanned by the defining vectors.
\end{proposition}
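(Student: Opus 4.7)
The plan is to reduce the tiling problem to a local condition around codimension $2$ faces and then to a classical fact about plane tilings by centrally symmetric polygons.

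By Remark~\ref{rem:edges} we may assume that no two defining vectors $v_i:=\gamma_i-\beta_i$ are parallel, so that the $v_i$ are precisely the edges of $Z$ up to sign. I would first analyse codimension $2$ faces. According to \eqref{eq:faces} a codimension $2$ face $F$ corresponds (after translating to the origin) to a class of functionals $\lambda\in V^\ast$ such that the set $S(\lambda)=\{i\mid\langle\lambda,v_i\rangle=0\}$ spans an $(n-2)$-dimensional subspace $W_F\subset V$, and $F$ itself is a $2$-dimensional centrally symmetric polygon with defining vectors $\{v_i\mid i\in S(\lambda)\}$. The facets of $Z$ containing $F$ correspond to hyperplanes $H\supset W_F$ that are spanned by defining vectors, i.e. to directions $\bar v_i\in V/W_F$ of the remaining defining vectors, grouped by parallel class. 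Thus the number of hyperplanes spanned by defining vectors that contain $W_F$ is exactly the number of parallel classes of nonzero projections $\pi(v_i)\in V/W_F\cong\RR^2$, where $\pi:V\to V/W_F$.

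The second step is to identify the local obstruction to tiling. Choose a codimension $2$ face $F$ and look at the star of $F$ in any candidate tiling. The tiles meeting along faces parallel to $W_F$ must, after applying $\pi$, fit together around the vertex $\pi(F)$ in the plane $V/W_F$, and the relevant $2$-dimensional cells are translates of the centrally symmetric polygon $\pi(Z)$. Hence $Z$ can tile space with codimension $2$ faces of type $F$ fitting together if and only if $\pi(Z)$ tiles the plane $V/W_F$ by translation. A classical result (going back to Fedorov) says that a centrally symmetric convex polygon tiles the plane by translation if and only if it is a parallelogram or a centrally symmetric hexagon, i.e. has exactly $2$ or $3$ parallel classes of edges. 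Combined with the previous step, this gives the ``only if'' direction of the proposition, and it also shows that the condition in the statement is necessary even without a full global tiling.

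For the ``if'' direction one has to upgrade this local condition to a genuine tiling. The plan is to take $L$ to be the tiling lattice defined in \eqref{eq:tiling_lattice} (generated by the vectors $t_\lambda$ associated to facet-defining $\lambda$) and check that the candidate tiling $\{l+Z\mid l\in L\}$ is face-to-face and covers $V$. Facet-to-facet matching follows from the definition of $t_\lambda$ and the fact that facets come in parallel pairs. The crucial compatibility is at codimension $2$ faces, where the hypothesis, together with the plane tiling classification above, guarantees that the tiles around each codim $2$ face close up correctly. Once the tiling is face-to-face on its codim $2$ skeleton, a standard inductive/topological argument (lifting the plane tiling of $V/W_F$ globally and using that the translation group acts freely) extends it to the whole of $V$.

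The main obstacle I anticipate is precisely this last global assembly step: passing from the local compatibility at codimension $2$ faces to a bona fide tiling of all of $V$ requires a careful verification that the translates by $L$ do not overlap in positive measure and that $\bigcup_{l\in L}(l+Z)=V$. I would do this by induction on $n$, using the fact that each facet of $Z$ is itself a space tiling zonotope in an $(n-1)$-dimensional affine subspace (its defining vectors still satisfy the hypothesis) and that the tiling of $V$ is built up by stacking translates across facets using the facet-pairing vectors $t_\lambda$. The codimension $2$ condition is exactly what is needed to check that these stackings are consistent around every edge of the facet-incidence structure.
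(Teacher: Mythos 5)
The paper does not actually prove this statement; it is quoted verbatim from McMullen's 1975 paper (the citation in the bracket), so there is no in-paper argument to compare your attempt against. That said, your sketch reconstructs the correct skeleton of the classical argument: reducing to the star of a codimension-$2$ face, projecting along the $(n-2)$-dimensional span $W$ of defining vectors to get a centrally symmetric polygon in $V/W\cong\RR^2$, and invoking Fedorov's classification of translatively plane-tiling centrally symmetric polygons (parallelograms and hexagons, i.e.\ $2$ or $3$ parallel edge-classes) is exactly what produces the ``2 or 3'' in the statement, and your identification of parallel classes of projected defining vectors with hyperplanes through $W$ spanned by defining vectors is correct. The ``only if'' direction as you describe it is essentially sound, modulo a reduction to face-to-face tilings and a short argument that every such $W$ is in fact realized as the direction of a codimension-$2$ face of the tiling.

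The genuine gap, which you flag yourself, is the ``if'' direction. Promoting local compatibility at codimension-$2$ faces to a global lattice tiling of $V$ is not a routine induction or ``standard topological argument''; it is precisely the content of the Venkov--Alexandrov--McMullen theorem on translative tilings by convex polytopes (centrally symmetric body, centrally symmetric facets, every belt has $4$ or $6$ facets), of which this proposition is the zonotope special case. Your proposed induction on dimension does not address the actual difficulty, namely showing that the translates $l+Z$, $l\in L$, neither overlap in positive measure nor leave gaps; this requires a global argument, e.g.\ a volume/averaging argument or a connectedness argument that $\bigcup_{l\in L}(l+Z)$ is both open and closed in $V$. As written, the ``if'' direction is an unproved assertion; you should either cite Venkov--McMullen for it or supply that missing global step.
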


\begin{lemma} \label{cor:tiling}
  Assume that $M\subset V$ is a lattice and that $Z$ is a translation of a zonotope with vertices in  $M$ such that in addition $M\cap \partial Z=\emptyset$.
Assume furthermore that~$Z$ is space tiling with tiling lattice $L$ (in particular $L\subset M$). 
  Then the map
  \[
    Z\cap M\mapsto M/L:m\mapsto \bar{m}
    \]
  is a bijection.
\end{lemma}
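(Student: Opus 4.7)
The plan is to establish surjectivity and injectivity of the map $m\mapsto \bar m$ separately, in each case playing the space-tiling property against the hypothesis $M\cap \partial Z=\emptyset$.

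For \emph{surjectivity}, I would take an arbitrary $m\in M$ and use the tiling $V=\bigcup_{l\in L}(l+Z)$ to find some $l\in L$ with $m\in l+Z$. Since $L\subset M$ and $m\in M$, the translate $m-l$ also lies in $M$. The point of the argument is that $m-l$ cannot lie on $\partial Z$: indeed, $m-l\in M\cap \partial Z=\emptyset$ would be an immediate contradiction. Hence $m-l$ lies in the interior of $Z$, and in particular $m-l\in Z\cap M$ is a preimage of $\bar m\in M/L$.

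For \emph{injectivity}, suppose $m_1,m_2\in Z\cap M$ with $\bar m_1=\bar m_2$, so that $l:=m_1-m_2\in L$. Assume for contradiction that $l\ne 0$. Then $m_1\in Z\cap (l+Z)$, and by the space-tiling hypothesis this intersection is a common face of $Z$ and $l+Z$. Because $Z$ is full dimensional, the nonzero translation $l+Z$ differs from $Z$, so the face is a \emph{proper} face of $Z$ and is therefore contained in $\partial Z$. But then $m_1\in M\cap\partial Z$, contradicting the hypothesis. So $l=0$ and the map is injective.

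I expect no serious obstacle here; everything is essentially unpacking definitions, and the assumption that $Z$ has vertices in $M$ is used only to ensure $L\subset M$, via the fact that the generators $t_\lambda$ in \eqref{eq:tiling_lattice} are $\ZZ$-combinations of defining vectors, which are themselves differences of vertices of $Z$. The only conceptual point to get right is that in the definition of space tiling, $Z\cap (l+Z)$ being a common face for $l\ne 0$ forces that face to be proper and hence boundary, which is what allows the hypothesis $M\cap\partial Z=\emptyset$ to deliver the contradiction in the injectivity step.
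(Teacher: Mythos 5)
Your proof is correct and follows essentially the same approach as the paper: the paper compresses both directions into the single $L$-equivariant disjoint decomposition $M=\coprod_{l\in L}(l+M\cap Z)$ and then quotients by $L$, whereas you unpack the covering (surjectivity) and disjointness (injectivity) separately, using the hypothesis $M\cap\partial Z=\emptyset$ in exactly the place the paper's proof implicitly relies on it for the union to be disjoint.
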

\begin{proof} By the definition of space tiling we have an $L$-equivariant decompostion:
  \[
    M=\coprod_{l\in L} (l+M\cap Z).
  \]
  Quotienting out the $L$-action on both sides yields the lemma.
\end{proof}
\section{The structure of ${\Delta^{m,n}_\tau}$}\label{sec:faces}
 One may apply \eqref{eq:faces} to  determine the facets of
 ${\Delta^{m,n}_\tau}$.
 One verifies using \eqref{eq:faces} that the facets of $\Delta^{m,n}_\tau$ are defined by $\pm w\lambda_k$ for $w\in S_n$,
 for $k=1,\ldots,n$, $\lambda_k=(1^k,0^{n-k})$. More precisely the facets associated to $\pm w\lambda_k$ are respectively of the form
 \begin{equation}
   \label{eq:facets}
\begin{aligned}
F_{w,k}^+&=\tau\nu+(1/2)m\sum_{ (w\lambda_k)_i=1,(w\lambda_k)_j=0}(e_i-e_j)+\sum_{(w\lambda_k)_i=1}e_i\\
&\qquad\qquad\qquad +
(1/2)m\sum_{(w\lambda_k)_i=(w\lambda_k)_j}[0,e_i-e_j]+\sum_{(w\lambda_k)_i=0}[0,e_i],\\
F_{w,k}^-&=\tau\nu+(1/2)m\sum_{ (w\lambda_k)_i=0,(w\lambda_k)_j=1}(e_i-e_j)\\
&\qquad\qquad\qquad +
(1/2)m\sum_{(w\lambda_k)_i=(w\lambda_k)_j}[0,e_i-e_j]+\sum_{ (w\lambda_k)_i=0}[0,e_i].
\end{aligned}
\end{equation}

\begin{proposition}\label{lem:tiling}
The zonotope $\Delta^{m,n}_\tau$ is space tiling with tiling lattice  $(mn+1)\ZZ^n+\ZZ(1,\dots,1)$.
\end{proposition}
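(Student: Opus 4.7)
The plan is to apply McMullen's criterion (Proposition~\ref{prop:tiling}) and then read off the tiling lattice from the facet description in \eqref{eq:facets} via the general formula \eqref{eq:tiling_lattice}.

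After combining parallel intervals as in Remark~\ref{rem:edges}, the defining vectors of $\Delta^{m,n}_\tau$ are, up to nonzero scalar, the vectors $e_i$ for $1\le i\le n$ together with $e_i-e_j$ for $1\le i<j\le n$. I would identify these with the edges of the augmented complete graph $K_n^+$ on vertex set $\{0,1,\ldots,n\}$, matching $e_i$ with the edge $\{0,i\}$ and $e_i-e_j$ with the edge $\{i,j\}$. A direct spanning-tree calculation then shows that, for any subgraph $G\subset K_n^+$, the linear span of the corresponding defining vectors has dimension $(n+1)-c(G)$, where $c(G)$ is the number of connected components of $G$ viewed as a graph on the full vertex set $\{0,1,\ldots,n\}$. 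Consequently, subspaces of $\RR^n$ spanned by subsets of the defining vectors are in bijection with set partitions of $\{0,1,\ldots,n\}$, the dimension being $(n+1)$ minus the number of blocks.

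Under this correspondence, an $(n-2)$-dimensional subspace $H$ corresponds to a partition $\{P_0,P_1,P_2\}$ with $0\in P_0$, and a hyperplane $H'\supset H$ spanned by the defining vectors corresponds to a two-block coarsening of $\{P_0,P_1,P_2\}$. There are exactly three such coarsenings---merge $P_0$ with $P_1$, merge $P_0$ with $P_2$, or merge $P_1$ with $P_2$---and because $P_1,P_2$ are disjoint and nonempty, these yield three distinct two-block partitions, hence three distinct hyperplanes. McMullen's criterion is therefore satisfied and $\Delta^{m,n}_\tau$ is space tiling.

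To identify the tiling lattice, one feeds the facet-defining forms $\pm w\lambda_k$ from just before \eqref{eq:facets} into formula \eqref{eq:tiling_lattice}. The computation collapses to
\[ t_{w\lambda_k} = (mn+1)\sum_{i\in w(\{1,\ldots,k\})}e_i - mk\,\nu. \]
In particular $t_{\lambda_n}=\nu$ and $t_{w\lambda_1}=(mn+1)e_{w(1)}-m\nu$, from which one reads off at once that the subgroup of $\RR^n$ generated by all $t_{w\lambda_k}$ equals $(mn+1)\ZZ^n+\ZZ\nu$.

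The main obstacle is the graph-theoretic bookkeeping behind McMullen's criterion: without the dictionary between spans of defining vectors and set partitions of $\{0,1,\ldots,n\}$, one is stuck with a clumsy $S_n$-symmetric case analysis, whereas with it the criterion is verified at a glance and the lattice computation is purely mechanical.
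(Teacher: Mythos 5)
Your proof is correct. It differs from the paper's only in how the space-tiling property is established: the paper embeds $\RR^n$ into the hyperplane $x_0+\cdots+x_n=0$ of $\RR^{n+1}$, observes that the defining vectors of $\Delta^{m,n}_\tau$ become scalar multiples of the edge vectors $e_j-e_i$ ($0\le i<j\le n$) of the permutahedron, cites the known fact that the permutahedron is space tiling, and then invokes Proposition~\ref{prop:tiling} to transfer the property since McMullen's criterion is insensitive to the lengths of the defining vectors. You instead verify McMullen's criterion directly, via the dictionary between spans of subsets of the defining vectors and set partitions of $\{0,\ldots,n\}$ (flats of the graphic matroid of the complete graph on $\{0,\ldots,n\}$), which makes the ``exactly three hyperplanes over any codimension-two span'' count immediate. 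This is essentially the standard argument behind the fact the paper cites, so the underlying combinatorics is the same; your route is more self-contained, the paper's is shorter. The tiling-lattice computation is identical in substance, and your observation that the cases $k=1$ and $k=n$ already generate the lattice is a mild streamlining of the paper's reduction via $k=n$.
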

\begin{proof} 
We identify $\RR^n$ with the hyperplane in
$\RR^{n+1}$ given by $x_0+\cdots+x_n=0$. Then ${\Delta^{m,n}_\tau}$ is a translation of the zonotope with the defining vectors $e_i-e_0$, $m(e_j-e_i)$, $1\leq i<j\leq n$. On the other hand, the permutahedron is the Minkowski sum of $[e_i,e_j]$, $0\leq i<j\leq n$; i.e. it is a zonotope with the defining vectors $e_j-e_i$,  $0\leq i<j\leq n$ (see e.g. \cite[Theorem 9.4]{BeckRobins}). It is known that the permutahedron is space tiling, see e.g. \cite[Ex. 9.12]{BeckRobins}. By Proposition \ref{prop:tiling}, the space tiling property does not depend on the length of the defining vectors. Thus, ${\Delta^{m,n}_\tau}$ is also space tiling.

Now we compute the tiling lattice. As indicated above the facets of $\Delta^{m,n}_k$ are determined by $\pm w\lambda_k$.
Furthermore in the formula \eqref{eq:tiling_lattice} we have $t_{-\lambda}=-t_\lambda$ so it suffices to compute the lattice spanned by $(t_{w\lambda_k})_{w,k}$
Let $S=\{i\mid (w\lambda_k)_i=1\}$ and hence $S^c=\{1,\dots,n\}\setminus S=\{i\mid (w\lambda_k)_i=0\}$. Note that $|S|=k$, $|S^c|=n-k$.
We write $\delta_{iS}$ for the characteristic function of $S$; i.e.\ $\delta_{iS}=1$ if $i\in S$ and $\delta_{iS}=0$ otherwise. With $(\delta_{iS})_{1\leq i\leq n}$ we denote $(\delta_{1S},\dots, \delta_{nS})$. Then
\begin{align*}
t_{w\lambda_k}&=(m/2)\sum_{i\in S,j\in S^c}(e_i-e_j)-(m/2)\sum_{i\in S^c,j\in S}(e_i-e_j)+\sum_{i\in S}e_i\\
&=m\sum_{i\in S,j\in S^c}(e_i-e_j)+\sum_{i\in S}e_i\\
&=(m(n-k)+1)(\delta_{iS})_{1\leq i\leq n}-mk(\delta_{iS^c})_{1\leq i\leq n}\\
&=
(mn+1)(\delta_{iS})_{1\leq i\leq n}-mk(1,\dots,1). 
\end{align*}
Setting $k=n$ we obtain $t_{w\lambda_n}=(1,\dots,1)$. Adding a  suitable multiple of $t_{w\lambda_n}$ to $t_{w\lambda_k}$ we then obtain that the lattice is generated by $(1,\ldots,1)$, 
$(mn+1)(\delta_{iS})_{1\leq i\leq n}$ for  $S\subset \{1,\dots,n\}$, which easily implies our claim.
\end{proof}

\subsection{Proof of Lemma \ref{lem:admissible}}
\label{sec:admissible}
Fix $1\le k\le n$. We  have to understand when $F_{w,k}^{\pm}$ contains an
integral point for $w\in S_n$. Renumbering the $(e_i)_i$ we may assume that $w=\id$ where
$\id$ is the identity in $S_n$. Moreover it follows from
\eqref{eq:facets} that  $F^-_{\id,k}$ contains a rational point if and only if
$F^+_{\id,k}$ does too. Hence we are reduced to understanding to when $F_k:=F_{\Id,k}^{+}$ contains an integral point.

\medskip

Let $L_k=\sum_{i>k}\ZZ e_i + \sum_{i,j\leq k\text{ or }i,j>k} \ZZ(e_i-e_j)$. Then $(L_k)_\RR$ is the hyperplane through the origin which is parallel to $F_k$.
Sending
$e_i$ to $1$ for $i\le k$ and to $0$ for $i>k$ identifies $\ZZ^n/L_k$ with $\ZZ$ and $\RR^n/(L_k)_\RR$ with $\RR$.
We regard $\ZZ^n/L_k$ as being contained in $\RR^n/(L_k)_\RR$.

Let $\overline{F_k}$ be the image of $F_k$ in $\RR^n/(L_k)_\RR$. Note that $\overline{F_k}$ is a singleton.
\begin{claim}\label{claim1} $F_k$ contains an integral point
if and only if
$\overline{F_k}\subset \ZZ^n/L_k$.
\end{claim}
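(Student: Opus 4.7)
The plan is to prove the two directions separately. The forward direction is immediate: given $p\in F_k\cap \ZZ^n$, the image $\overline{F_k}$ is the singleton $\{\bar p\}$, which lies in $\ZZ^n/L_k$.

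For the converse, my approach exploits a product decomposition of $F_k$. Under the coordinate splitting $\RR^n=\RR^k\times \RR^{n-k}$ (first $k$ versus last $n-k$ coordinates), the Minkowski summands in \eqref{eq:facets} separate: the ``cross'' term $(m/2)\sum_{i\leq k,\,j>k}(e_i-e_j)$ contributes only to a constant shift, while each interval $(m/2)[0,e_i-e_j]$ with $i,j$ on the same side of $k$, and each cube interval $[0,e_i]$ for $i>k$, lives inside a single factor. Thus one obtains $F_k=F^{(1)}\times F^{(2)}$, where $F^{(1)}\subset \RR^k$ lies in the hyperplane $\{\sum_{i\leq k}x_i= k(\tau+m(n-k)/2+1)\}$, and $F^{(2)}\subset \RR^{n-k}$ is a translate of $\Delta^{m,n-k}_{\tau-mk/2}$.

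A short direct computation identifies $L_k$ with $A_{k-1}\times \ZZ^{n-k}$ under this splitting, where $A_{k-1}:=\{a\in \ZZ^k:\sum_i a_i=0\}$ is the root lattice; indeed $L_k$ is the kernel of the surjection ``sum of first $k$ coordinates'' $\ZZ^n\twoheadrightarrow \ZZ$, and both lattices have rank $n-1$. I then claim that each factor $F^{(j)}$ is space tiling in its affine hull under a sublattice of the corresponding factor of $L_k$. Combining the parallel pairs of summands identifies $F^{(1)}$, up to translation, with a dilated centered permutahedron in $\RR^k$, which tiles the hyperplane $\{\sum x_i=0\}$ under $mA_{k-1}\subset A_{k-1}$ by the classical tiling of the permutahedron by the root lattice; Proposition \ref{lem:tiling} applied in dimension $n-k$ gives that $F^{(2)}$ tiles $\RR^{n-k}$ under $(m(n-k)+1)\ZZ^{n-k}+\ZZ\nu'\subset \ZZ^{n-k}$. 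The product $F_k$ therefore tiles its affine hyperplane under a lattice $L'\subset L_k\subset \ZZ^n$.

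The conclusion is then immediate: the hypothesis $\overline{F_k}\subset \ZZ^n/L_k$ produces a point $q\in \ZZ^n$ lying in the affine hyperplane containing $F_k$, and the tiling yields $l'\in L'$ with $q-l'\in F_k$; since $l'\in L'\subset \ZZ^n$, the point $q-l'$ is the desired integer point. I expect the main technical hurdle to be the Minkowski-sum bookkeeping---in particular matching the ordered-pair convention in \eqref{eq:facets} against the standard description of the permutahedron as a zonotope---together with the verification that the product tiling lattice sits inside $L_k$.
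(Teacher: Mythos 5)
Your argument is correct in outline, but it takes a substantially longer route than the paper's and contains a small slip along the way. The paper's proof of the converse is short: pick $x\in\ZZ^n$ in the affine hull of $F_k$ (which exists by the hypothesis $\overline{F_k}\subset\ZZ^n/L_k$), note that $-x+F_k$ is a translate of the full-dimensional lattice zonotope \eqref{eq:translated} inside $(L_k)_\RR$, and invoke Proposition~\ref{prop:vol_lat_shifted}, which guarantees that any translate of a full-dimensional lattice zonotope contains a lattice point because its lattice-point count is bounded below by its (positive integer) lattice volume. No product decomposition and no tiling result are needed. Your proof instead establishes the stronger fact that $F_k$ is itself space-tiling under a sublattice of $L_k$, via the factorization $F_k\cong F^{(1)}\times F^{(2)}$; the decomposition is a nice observation and the argument does go through, but it proves more than the claim requires. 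The slip is in identifying the tiling lattice of $F^{(1)}$: the centered permutahedron in $\RR^k$ does \emph{not} tile the zero-sum hyperplane under the root lattice $A_{k-1}$; its tiling lattice is $kA_{k-1}^{\ast}$, of index $k^{k-2}$ in $A_{k-1}$ (already for $k=3$ the hexagonal permutahedron has lattice area $3$, so it cannot tile under a unimodular lattice). Thus $F^{(1)}$ tiles under $mkA_{k-1}^{\ast}$, not $mA_{k-1}$. Since $mkA_{k-1}^{\ast}\subset A_{k-1}$ the inclusion you actually use still holds, so your proof survives; but it would be cleaner to avoid the explicit identification altogether by observing from \eqref{eq:tiling_lattice} that the tiling lattice of a zonotope is automatically contained in the lattice generated by its defining vectors, which here is visibly inside the relevant factor of $L_k$.
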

One direction is obvious.
To see the other direction assume $x\in \ZZ^n$ is such that $(x+(L_k)_\RR)\cap F_k\neq \emptyset$. Then $-x+F_k$ is
a translation of 
\begin{equation}
  \label{eq:translated}
 (1/2)m\sum_{i,j\leq k\text{ or }i,j>k}[0,e_i-e_j]+\sum_{i>k}[0,e_i]
\end{equation}
inside $(L_k)_\RR$. Now \eqref{eq:translated} is itself a translation of a full dimensional lattice polytope and any translation of a full dimensional 
lattice polytope contains an integral point (e.g.\ by Proposition \ref{prop:vol_lat_shifted}). Hence there exists some $y\in \ZZ^n$ such that
$y\in -x+F_k$. We conclude $x+y\in F_k\cap \ZZ^n$.
\begin{claim}
  $\overline{F_k}$ is contained in $\ZZ^n/L_k$ if and only if $\tau-m(n-1)/2\in (1/k)\ZZ$,
  \end{claim}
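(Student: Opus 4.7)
The plan is to compute $\overline{F_k}$ explicitly using the identification of $\RR^n/(L_k)_\RR$ with $\RR$ introduced just above, which sends $e_i \mapsto 1$ for $i\le k$ and $e_i \mapsto 0$ for $i>k$. Under this identification $\ZZ^n/L_k$ becomes the standard inclusion $\ZZ\subset\RR$, so the claim amounts to determining precisely when the unique real number $\overline{F_k}$ is an integer.

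The key observation is that every interval term in the formula \eqref{eq:facets} for $F_k=F^+_{\id,k}$ collapses to $\{0\}$ under the quotient: both $[0,e_i-e_j]$ with $i,j$ in the same block (both $\le k$ or both $>k$) and $[0,e_i]$ with $i>k$ have endpoints mapping to $0$. Only the non-interval terms contribute, and a direct computation (with $\nu\mapsto k$, $\sum_{i\le k}e_i\mapsto k$, and each of the $k(n-k)$ terms $(m/2)(e_i-e_j)$ with $i\le k<j$ contributing $m/2$) gives
\[
\overline{F_k} \;=\; k\tau \;+\; \frac{m}{2}k(n-k) \;+\; k.
\]

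Thus the condition $\overline{F_k}\in \ZZ^n/L_k$ becomes $k\tau + (m/2)k(n-k)\in\ZZ$. I would then compare this with the target condition $k\tau - mk(n-1)/2\in\ZZ$ by computing the difference of the two expressions, which equals
\[
k \;+\; \frac{mk\bigl((n-k)+(n-1)\bigr)}{2} \;=\; k \;+\; \frac{mk(2n-k-1)}{2}.
\]
Since $k(2n-k-1) = 2nk - k(k+1)$ and both $2nk$ and $k(k+1)$ are even, this difference lies in $\ZZ$. Hence the two integrality conditions are equivalent, which is precisely the claim $\tau - m(n-1)/2\in (1/k)\ZZ$.

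No step is genuinely difficult; the main things to be careful about are tracking the identifications and performing the small parity check that makes the two integrality statements line up. Everything else is book-keeping in \eqref{eq:facets}.
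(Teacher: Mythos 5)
Your proof is correct and follows essentially the same route as the paper: compute $\overline{F_k}=k\tau+\tfrac{m}{2}k(n-k)+k$ from \eqref{eq:facets} under the quotient identification, then reduce the integrality condition to the target one by a small parity check (you use that $k(k+1)$ is even, the paper uses equivalently that $k(k-1)$ is even). No issues.
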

  To prove this we use the identifications  $\ZZ^n/L_k\cong\ZZ$,  $\RR^n/(L_k)_\RR\cong\RR$ given above. By \eqref{eq:facets} $\overline{F_k}$
is the singleton $\{k\tau+mk(n-k)/2+k\}$. This is contained in $\ZZ$ 
if and only if $\tau+m(n-k)/2\in (1/k) \ZZ$ which may be rewritten as $\tau-m(n-1)/2\in -m(k-1)/2+(1/k)\ZZ=(1/k)\ZZ$.

\medskip

Combining Claims 1 and 2 finishes the proof.
\subsection{Proof of Proposition \ref{prop:lattice}}
\label{sec:tiling}
The fact that \eqref{eq:tiling} is a bijection follows directly from Lemma \ref{cor:tiling} combined with Proposition \ref{lem:tiling}. The fact that \eqref{eq:tiling}
is $S_n$-equivariant is clear.
\section{Proof of Proposition \ref{prop:tilting}}
\label{sec:tilting}
Let the setting be as in \S\ref{sec:Hmn} in the introduction. Our aim is
to apply \cite{HLSam} but the representation $W$ is not
``quasi-symmetric'' (see \cite{SVdB}). So the theory of \cite{HLSam}
does not apply on the nose.  We circumvent this by relating $H_{m,n}$
to the moduli space $\tilde{H}_{m,n}$ of stable (or equivalently
semi-stable) representations with dimension vector $(1,n)$ and
stability condition $(-n,1)$ of the quiver $\tilde{Q}_{m,n}$:
\[
  \begin{tikzcd}
    \bullet \arrow[rr,out=20,in=160] &&\bullet \arrow[out=60,in=120,loop,"m"']\arrow[out=240,in=300,loop,"1"']\arrow[out=-30,in=30,loop,dotted]
    \arrow[ll,out=200,in=-20]
  \end{tikzcd}
\]
This quiver is symmetric so the corresponding GIT setting is symmetric. Moreover it is easy to see that a representation
of $\tilde{Q}_{m,n}$ is semi-stable if and only if its restriction ot $Q_{m,n}$ is semi-stable.
It then follows by 
a descent argument that the map $\pi:\tilde{H}_{m,n}\r H_{m,n}$, obtained by forgetting the left pointing arrow, is a vector bundle.

We now assume that~$\tau$ is admissible. By Lemma \ref{lem:admissible}, $\partial(\Delta^{m,n}_\tau)\cap \ZZ^n=\emptyset$.  For $\xi\in \ZZ^n$ let $\tilde{\Tscr}_\tau$ be defined as $\Tscr_\tau$ but using the quiver
$\tilde{Q}_{m,n}$. Then~$\tilde{\Tscr}_\tau$ is a tilting bundle on $\tilde{H}_{m,n}$  by \cite[Corollary 4.2]{HLSam}.
\footnote{The polytope used in \cite{HLSam} to construct tilting objects is denoted  by $(1/2)\bar{\Sigma}-\rho+\delta$ in loc.\ cit., where~$\rho$ is half the sum of the positive roots and (in our setting) $\delta=\tau' \nu$ for suitable~$\tau'$. See \cite[Lemma 2.9, Corollary 4.2]{HLSam}.
  Using the definition of $\Sigma$ in \cite[\S2]{HLSam} one checks that $\Delta^{m,n}_\tau-\hat{\rho}=(1/2)\bar{\Sigma}-\rho+\tau'\nu$ where $\tau=\tau'-1+n/2$, so that we may indeed use
  the results of \cite{HLSam}. Note that the exact relation between $\tau$ and $\tau'$ is not important here.}
\begin{lemma} Let $\pi:\tilde{X}\r X$ be a morphism of quasi-compact quasi-separated schemes with a section $i:X\r \tilde{X}$. Let $\Tscr\in \Perf(X)$  such that
  $L\pi^\ast \Tscr$ is a tilting object in $D_{\Qch}(\tilde{X})$. Then $\Tscr$ is itself tilting.
\end{lemma}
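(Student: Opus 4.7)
The structural feature of the pair $(\pi,i)$ is the identity $\pi\circ i=\id_X$, which on derived categories yields $Li^{\ast}\circ L\pi^{\ast}\cong \id_{D(X)}$ and $R\pi_{\ast}\circ Ri_{\ast}\cong \id_{D(X)}$. This exhibits $D_{\Qch}(X)$ as a direct summand of $D_{\Qch}(\tilde X)$, and the two ingredients of tiltingness for $\Tscr$, namely the vanishing of higher self-extensions and classical generation, can each be transferred from $L\pi^{\ast}\Tscr$ to $\Tscr$ by a short formal argument.

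For the Ext vanishing, pullback along $\pi$ gives a natural map of complexes
\[
L\pi^{\ast}\colon \RHom_X(\Tscr,\Tscr)\longrightarrow \RHom_{\tilde X}(L\pi^{\ast}\Tscr,L\pi^{\ast}\Tscr),
\]
which is split by $Li^{\ast}$ since $Li^{\ast}L\pi^{\ast}=\id$. Hence $\RHom_X(\Tscr,\Tscr)$ is a direct summand of $\RHom_{\tilde X}(L\pi^{\ast}\Tscr,L\pi^{\ast}\Tscr)$, and since the latter is concentrated in degree $0$ by the tilting hypothesis, so is the former.

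For generation, suppose $F\in D_{\Qch}(X)$ satisfies $\RHom_X(\Tscr,F[n])=0$ for all $n$. Setting $G:=Ri_{\ast}F\in D_{\Qch}(\tilde X)$ and applying the $(L\pi^{\ast},R\pi_{\ast})$ adjunction together with the identity $R\pi_{\ast}Ri_{\ast}=R(\pi i)_{\ast}=\id$, one obtains
\[
\RHom_{\tilde X}(L\pi^{\ast}\Tscr,G)=\RHom_X(\Tscr,R\pi_{\ast}G)=\RHom_X(\Tscr,F)=0.
\]
Since $L\pi^{\ast}\Tscr$ classically generates $D_{\Qch}(\tilde X)$, we conclude $G=0$, and then $F=R\pi_{\ast}G=0$, proving that $\Tscr$ generates $D_{\Qch}(X)$.

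The argument is essentially formal once the adjunctions are identified; the only bookkeeping points are that $Ri_{\ast}$ preserves quasi-coherence in the qcqs setting (and takes $D_{\Qch}(X)$ into $D_{\Qch}(\tilde X)$), and that the composition identities $Li^{\ast}L\pi^{\ast}\cong L(\pi i)^{\ast}$ and $R\pi_{\ast}Ri_{\ast}\cong R(\pi i)_{\ast}$ hold on $D_{\Qch}$. Neither represents a genuine obstacle; the main conceptual point, that a section allows one to split both pullback and pushforward on derived categories, is precisely what makes the transfer of the tilting property to the base automatic.
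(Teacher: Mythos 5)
Your proof is correct and follows essentially the same strategy as the paper: both treat generation by mapping a potential right-orthogonal complement $F$ into $D_{\Qch}(\tilde X)$ via $Ri_\ast$ and using the adjunction plus $R\pi_\ast Ri_\ast\cong\id$, and both treat Ext-vanishing by exhibiting $\RHom_X(\Tscr,\Tscr)$ as a retract of $\RHom_{\tilde X}(L\pi^\ast\Tscr,L\pi^\ast\Tscr)$. The only cosmetic difference is in the Ext step: you produce the retraction directly via $Li^\ast\circ L\pi^\ast\cong\id$, whereas the paper uses the $(L\pi^\ast,R\pi_\ast)$ adjunction and the splitting of the unit $\Tscr\to R\pi_\ast L\pi^\ast\Tscr$ coming from the section; these are the same splitting seen from two sides.
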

\begin{proof}
  We first need to prove that $\Tscr$ is a generator of  $D_{\Qch}(X)$, i.e.\ that $\Tscr^\perp=0$. Let $\Nscr\in D_{\Qch}(X)$ be such
  that $\RHom_X(\Tscr,\Nscr)=0$. We compute
  \begin{align*}
    \RHom_X(\Tscr,\Nscr)&=\RHom_X(\Tscr,R\pi_\ast Ri_\ast \Nscr)\\
    &=\RHom_{\tilde{X}}(L\pi^\ast \Tscr,Ri_\ast \Nscr).
  \end{align*}
 Since $(L\pi^\ast \Tscr)^\perp=0$ deduce $Ri_\ast \Nscr=0$ and hence $\Nscr=R\pi_\ast Ri_\ast\Nscr=0$. So $\Tscr^\perp=0$.
  We also have
  \begin{align*}
    \RHom_{\tilde{X}}(L\pi^\ast \Tscr,L\pi^\ast \Tscr)&= \RHom_{X}(\Tscr,R\pi_\ast L\pi^\ast \Tscr).
  \end{align*}
  Since $\tilde{X}\r X$ is split, the unit map $\Tscr\r R\pi_\ast L\pi^\ast \Tscr$ is also split. It follows that
  $  \RHom_{X}(\Tscr,\Tscr)$ is a direct summand of  $\RHom_{\tilde{X}}(L\pi^\ast \Tscr,L\pi^\ast \Tscr)$. Hence if $L\pi^\ast \Tscr$ has no non-zero self-extensions
  then neither has $\Tscr$.
\end{proof}

\section{A semi-orthogonal decomposition for the noncommutative Hilbert scheme}

We use the same notations as in \S\ref{sec:tilting}. Let $\tilde{W}$
be the GIT setting corresponding to $\tilde{Q}_{m,n}$,
i.e. $(\tilde{W},G)$ with
$\tilde{W}=\CC^n\oplus (\CC^n)^\ast\oplus M_n(\CC)^{\oplus m}$.  We
will consider the $\CC^\ast$-action on $\tilde{W}$ obtained by scaling
the right pointing arrow in $\tilde{Q}_{m,n}$. This action 
commutes with the $G$-action and we have $\tilde{W}\quot \CC^\ast=W$,
$\tilde{H}_{m,n}\quot \CC^\ast=H_{m,n}$.

 Below we will define some notions for the GIT setting $(W,G)$. Similar notion
 related to the GIT setting $(\tilde{W},G)$ will be decorated with a tilde. 
 
\medskip

Recall that $H_{m,n}=W^{ss,\chi}/G$.
For $\tau \in \RR$ let $\Mscr_\tau\subset D_{\Qch}(W/G)$ be the smallest cocomplete subcategory of $D_{\Qch}(W/G)$ containing the $G$-equivariant $\Oscr_W$-modules
$V(\xi)\otimes \Oscr_W$ for $\xi\in (\Delta^{m,n}_\tau-\hat{\rho})\cap (\ZZ^n)^+$.
\begin{lemma}\label{lem:restriction} Assume that $\tau$ is admissible. The restriction map
  \[
    \res :D_{\Qch}(W/G)\r D_{\Qch}(W^{ss,\chi}/G)
  \]
  restricts to an equivalence $\Mscr_\tau \cong  D_{\Qch}(W^{ss,\chi}/G)$.
\end{lemma}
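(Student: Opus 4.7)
The plan is to identify the restriction functor with a standard tilting equivalence. Let $E := \bigoplus_{\xi} V(\xi) \otimes \Oscr_W$, where $\xi$ ranges over the finite set $(\Delta^{m,n}_\tau - \hat\rho) \cap (\ZZ^n)^+$, so that $\res(E) = \Tscr_\tau$, the tilting bundle from Proposition \ref{prop:tilting}. Put $A := \End_{D_{\Qch}(W/G)}(E)$ and $B := \End_{D_{\Qch}(H_{m,n})}(\Tscr_\tau)$. I will show that $\Mscr_\tau \cong D(A)$, that $D_{\Qch}(H_{m,n}) \cong D(B)$, and that the map $A \to B$ induced by $\res$ is an isomorphism; the conclusion follows by composing these equivalences.

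Because $W$ is affine and $G$ is reductive, each $V(\xi) \otimes \Oscr_W$ is perfect, hence compact in $D_{\Qch}(W/G)$, and a standard equivariant calculation gives
\[
  \Ext^i_{W/G}\bigl(V(\xi_1) \otimes \Oscr_W,\, V(\xi_2) \otimes \Oscr_W\bigr)
  =
  \begin{cases}
    \Hom_G\bigl(V(\xi_1),\, V(\xi_2) \otimes \CC[W]\bigr) & i = 0,\\
    0 & i > 0.
  \end{cases}
\]
Consequently $E$ is a compact generator of the localizing subcategory $\Mscr_\tau$ with $\Ext^{>0}(E,E)=0$, and the classical Keller--Neeman theorem yields $\Mscr_\tau \cong D(A)$. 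On the other side, Proposition \ref{prop:tilting} shows that $\Tscr_\tau$ is tilting, so $D_{\Qch}(H_{m,n}) \cong D(B)$.

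The remaining, and principal, obstacle is to prove that the map $A \to B$ induced by $\res$ is an isomorphism, i.e.\ that for all $\xi_1,\xi_2$ in the window the inclusion $\CC[W] \hookrightarrow \CC[W^{ss,\chi}]$ induces an isomorphism
\[
  \Hom_G\bigl(V(\xi_1),\, V(\xi_2) \otimes \CC[W]\bigr) \;\stackrel{\sim}{\longrightarrow}\; \Hom_G\bigl(V(\xi_1),\, V(\xi_2) \otimes \CC[W^{ss,\chi}]\bigr).
\]
This is the window-vanishing statement underlying the Halpern-Leistner--Sam window equivalence. Since $W$ is not quasi-symmetric, \cite{HLSam} does not apply on the nose; mimicking the proof of Proposition \ref{prop:tilting}, I would first establish the analogous isomorphism for the quasi-symmetric $(\tilde W, G)$ by invoking \cite{HLSam}, with the genericity of $\tau$ supplied by Lemma \ref{lem:admissible}, and then descend by taking $\CC^\ast$-invariants along the vector bundle $\tilde H_{m,n} \to H_{m,n}$. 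Once $A \cong B$ is secured, the claimed equivalence $\res\colon \Mscr_\tau \cong D_{\Qch}(W^{ss,\chi}/G)$ is the composite $\Mscr_\tau \cong D(A) \cong D(B) \cong D_{\Qch}(H_{m,n})$.
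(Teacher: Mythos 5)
Your proposal is correct and follows essentially the same route as the paper: reduce the equivalence to the statement that $\res$ induces an isomorphism on the endomorphism algebra of the finite sum $E$ (which the paper phrases as "it suffices to prove that the restriction map defines an isomorphism $\Hom_W(V(\xi)\otimes\Oscr_W,V(\xi')\otimes\Oscr_W)^G \to \Hom_{H_{m,n}}(\Vscr(\xi),\Vscr(\xi'))$''), and then establish that isomorphism by lifting to the quasi-symmetric setting $(\tilde W,G)$ via \cite[Theorem 3.2]{HLSam} and descending by taking $(-)^{\CC^\ast}$-invariants. The only difference is presentational: you unpack the two tilting equivalences explicitly, whereas the paper invokes them in one line.
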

\begin{proof}
  Using the fact that $\Tscr_\tau$ (cfr Proposition \ref{prop:tilting}) is a tilting bundle it  suffices to prove that when $\xi,\xi'\in (\ZZ^n)^+\cap (\Delta^{m,n}_{\tau}-\hat{\rho})$ the restriction map defines an isomorphism
  \[
\Hom_W(V(\xi)\otimes \Oscr_W,V(\xi')\otimes \Oscr_W)^G \r \Hom_{H_{m,n}}(\Vscr(\xi),\Vscr(\xi')).
\]
This follows by applying $(-)^{\CC^\ast}$ to
\[
\Hom_{\tilde{W}}(V(\xi)\otimes \Oscr_{\tilde{W}}, V(\xi')\otimes \Oscr_{\tilde{W}})^G \r \Hom_{\tilde{H}_{m,n}}(\tilde{\Vscr}(\xi),\tilde{\Vscr}(\xi'))
\]
together with \cite[Theorem 3.2]{HLSam}. 
\end{proof}
Put
\[
      \Tscr_{\tau,c}:= \bigoplus_{\xi \in (\ZZ^n)^+\cap (\Delta^{m,n}_{\tau}-\hat{\rho}),c(\xi)=c} \Vscr(\xi)
\]
\begin{lemma}
  \label{lem:homs}
    Assume $\tau$ that is admissible.
    \begin{enumerate}
    \item \label{item:1}We have $\Tscr_{\tau,c}= 0$ unless $c\in \{u,\ldots, u+n-1\}$ for $u=\lceil n\tau\rceil-n(n-1)/2$. 
    \item Assume that $m\geq 2$. Then $\Tscr_{\tau,c}\neq  0$ for $c\in \{u,\ldots, u+n-1\}$ for $u=\lceil n\tau\rceil-n(n-1)/2$.
        \item We have $\Hom_{H_{m,n}}(\Tscr_{\tau,c},\Tscr_{\tau,c'})=0$ if $c>c'$.
    \item We have
     $
        \Hom_{H_{m,n}}(\Tscr_{\tau,c},\Tscr_{\tau,c})=(\End(V_\tau(c))\otimes \Gamma(W_0))^{\bar{G}}
        $ where $\bar{G}=G/Z(G)=\PGL_n(\CC)$ and where $V_\tau(c)$ was defined in \eqref{eq:Vc}.
\end{enumerate}
\end{lemma}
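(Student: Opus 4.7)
I will address the four parts in turn.

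For part~(1), I would compute the image of the color function $c$ on $\Delta^{m,n}_\tau$ directly from its Minkowski-sum decomposition. Since $c(e_i-e_j)=0$ and $c(e_i)=1$, one obtains $c(\Delta^{m,n}_\tau)=[n\tau,n\tau+n]$. Admissibility (Definition~\ref{def:admissible}, applied with $k=n$ and using $mn(n-1)/2\in\ZZ$) forces $n\tau\notin\ZZ$, so the integer $c$-values on integer points of $\Delta^{m,n}_\tau$ lie in the $n$ consecutive integers $\{c_0,\ldots,c_0+n-1\}$ with $c_0=\lceil n\tau\rceil$. Subtracting $c(\hat\rho)=n(n-1)/2$ yields the claim.

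For parts~(3) and~(4), I would invoke Lemma~\ref{lem:restriction} to identify $\Hom_{H_{m,n}}(\Vscr(\xi),\Vscr(\xi'))$ with the $G$-invariants of $V(\xi)^*\otimes V(\xi')\otimes\CC[W]$. Decomposing $W=W_0\oplus\CC^n$, the center $Z(G)=\CC^*$ acts trivially on coordinate functions of $W_0$ and by $t^{-1}$ on those of $\CC^n$; hence $\CC[W]$ has only non-positive central weights, with the weight-$0$ part equal to $\CC[W_0]=\Gamma(W_0)$. Since $V(\xi)^*\otimes V(\xi')$ has central weight $c'-c$, $G$-invariance forces the polynomial factor to carry weight $c-c'\le 0$, proving~(3). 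For~(4), the case $c=c'$ singles out the polynomial factor $\CC[W_0]$; as the center then acts trivially on every tensor factor, $G$-invariants coincide with $\bar{G}=\PGL_n$-invariants.

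The main obstacle is part~(2). For each $c\in\{u,\ldots,u+n-1\}$ I propose to exhibit a dominant weight of the form $\xi^{(k,j)}=k\nu+\omega_j$, where $\omega_j=e_1+\cdots+e_j$ (with $\omega_0:=0$). Each $\xi^{(k,j)}$ is dominant for all $k\in\ZZ$ and satisfies $c(\xi^{(k,j)})=nk+j$; as $j$ ranges over $\{0,1,\ldots,n-1\}$, the values of $c$ cover all residues modulo~$n$, so each of the $n$ target color values is matched by a unique $j$. The condition $\xi^{(k,j)}+\hat\rho\in\Delta^{m,n}_\tau$ translates via~\eqref{eq:facets} into a system of linear inequalities on~$k$; the two constraints coming from the full-sum facets $\pm\lambda_n$ confine $k$ to the interval $[\tau-(n-1)/2-j/n,\ \tau+1-(n-1)/2-j/n]$, which has length exactly~$1$. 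A short check using admissibility shows that neither endpoint is an integer, so this interval contains a unique integer~$k$. It remains to verify that the remaining facet inequalities are looser; a direct computation shows that the slack in the bounds from $\lambda_p$ and $-\lambda_{\{p+1,\ldots,n\}}$ relative to $\pm\lambda_n$ is non-negative precisely when $m\ge 2$ (for $n\ge 2$). This last verification, though elementary, is the most delicate step of the argument.
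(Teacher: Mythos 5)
Parts (1), (3) and (4) follow the same route as the paper: (1) comes from computing the image of the color map on $\Delta^{m,n}_\tau$ and using admissibility to rule out $n\tau\in\ZZ$; (3) and (4) use Lemma~\ref{lem:restriction} together with the fact that $Z(G)$ acts with non-positive weights on $\Gamma(W)$ and trivially on $\Gamma(W_0)$. No comment needed there.

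For part (2) your route is genuinely different from the paper's and, in my view, cleaner. The paper first imposes an auxiliary condition (I) (``$\xi$ avoids all facets except the two extreme ones''), uses a deformation argument to move across inadmissible values of $\tau$, and thereby reduces to constructing $n$ explicit weights in the single window $0<\tau<1/n$, namely $\xi+\hat\rho=(n-1,n-3,\dots,n-2l+1,n-2l,\dots,-n+2)$. You instead take the family $\xi^{(k,j)}=k\nu+\omega_j$, observe that the pair of facet inequalities for $\lambda_n=\nu$ pins $k$ to a length-$1$ interval whose endpoints are non-integral by admissibility (indeed, integrality of either endpoint would force $n\tau\in\ZZ$), and then argue that for $m\ge 2$ the remaining facet inequalities are implied by those two. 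This eliminates the deformation step entirely and works uniformly in admissible $\tau$. I checked the omitted ``slack'' computation: writing $x=\xi^{(k,j)}+\hat\rho$, one has $\sum_{i=1}^{p}x_i=pk+pn-p(p+1)/2+\min(p,j)$ and $\sum_{i=n-p+1}^{n}x_i=pk+p(p-1)/2+\max(0,p+j-n)$, and the requirement that the $p<n$ bounds contain the $p=n$ interval reduces on both sides, after using $m\ge 2$, to the elementary inequalities $(n-p)/2\ge \min(p,j)/p-j/n$ and $(n-p)/2\ge j/n-\max(0,p+j-n)/p$, both of which hold for $n\ge 2$; and both do fail for $m=1$ (e.g.\ $p\le j$), so your remark ``precisely when $m\ge 2$'' is accurate. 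So the proposal is correct, with the caveat that the most substantial step is asserted rather than carried out; writing out the two-case slack estimate above would complete it.
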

\begin{proof}
  \begin{enumerate}
  \item 
  If $\xi\in \Delta^{m,n}_{\tau}-\hat{\rho}$ then it follows from the definition of $\Delta^{m,n}_\tau$ that $c(\xi)=\sum_i\xi_i=v+n\tau-c(\hat{\rho})=v+n\tau-n(n-1)/2$ for $v\in \{0,\dots,n\}$.
By admissibility $n\tau$ is non-integral. It follows that $c(\xi)$ can only be an integer when it is as in the statement of the lemma.

\item 
  It suffices (by \eqref{item:1}) to find $n$ points $(\xi^i)_{i=1}^n$
  in $(\ZZ^n)^+\cap(\Delta_{\tau}^{m,n}-\hat{\rho})$ whose $c$-values
  give all congruence classes modulo $n$.  
We will construct such $(\xi^i)_i$ such that each $\xi=\xi^i$ satisfies the following
additional condition:
\begin{enumerate}
\item[(I)] $\xi$
  does not belong to any facet of $\Delta^{m,n}_{\tau'}-\hat{\rho}$
  for any $\tau'\in \RR$ except for possibly the ``extreme'' facets
$
  F_{\id,n}^+-\hat{\rho}, F_{\id,n}^--\hat{\rho},
$
  cf. \S\ref{sec:faces}. 
\end{enumerate}
Assume that for a particular $\tau$ we have constructed $(\xi^i)_i$, covering $n$ congruence classes, such
that in addition (I) is satisfied for each $\xi=\xi^i$.

Then changing $\tau$ by crossing an
  non-admissible value $\tau_0$ to $\tau'$, we either still have
  $\xi\in \Delta_{\tau'}^{m,n}-\hat{\rho}$ or
  $\xi\not \in \Delta_{\tau'}^{m,n}-\hat{\rho}$ and $\xi$ lies on one
  of the two extreme facets of $\Delta_{\tau_0}^{m,n}-\hat{\rho}$. In this
  case $\xi+\sum_i e_i$ or $\xi-\sum_i e_i$ lies in
  $\Delta_{\tau_0}^{m,n}-\hat{\rho}$ on the opposite facet and hence
  belongs to $(\ZZ^n)^+\cap(\Delta_{\tau'}^{m,n}-\hat{\rho})$ and its
  $c$-value is congruent to $c(\xi)$ modulo $n$. Hence we keep $n$
  points in $(\ZZ^n)^+\cap(\Delta_{\tau'}^{m,n}-\hat{\rho})$ whose
  $c$-values give all congruence classes modulo $n$.
Thus it is sufficient to construct the $(\xi^i)_i$ satisfying (I) and covering
$n$ congruence classes under the assumption that $0< \tau<1/n$.

For $0< \tau<1/n$ and $0\leq l\leq n-1$ let
\[
\xi=
(n-1,n-3,\dots,n-2l+1,n-2l,n-2l-2,\dots,-n+2)-\hat{\rho}.
\]
One verifies that $\xi \in (\ZZ^n)^+$. 
We claim that $\xi \in \Delta_{\tau}^{m,n}-\hat{\rho}$ and that $\xi$ satisfies~(I).
Since  $c(\xi)=n-l-c(\hat{\rho})$, the possible $\xi$ cover $n$ congruence classes and hence we are done if we can prove this claim.

We will verify both parts of the claim simultaneously. To show that $\xi$ satisfies condition (I)  we must understand the facets of $\Delta_{\tau'}^{m,n}$ for all $\tau'\in \RR$.
So as before let $F_{w,k}^\pm$ be the facets of $\Delta_{\tau'}^{m,n}$ and let $f^\pm\in F_{w,k}^\pm$. By \eqref{eq:facets} we obtain:
\begin{equation}
\label{eq:listfaces}
\begin{aligned}
\langle w\lambda_k,f^+\rangle&=\tau'k+(1/2)mk(n-k)+k,\\
\langle -w\lambda_k,f^-\rangle&=-\tau'k+(1/2)mk(n-k).
\end{aligned} 
\end{equation}

These equations define the supporting hyperplanes of $\Delta_{\tau'}^{m,n}$.
On the other hand, direct verification using the definition of $\xi$ yields:
\begingroup
\begin{align}\label{eq:oth}
\langle w\lambda_k,\xi+\hat{\rho}\rangle& \leq 
\begin{cases}
k(n-k)& \text{if $k\leq l$},\\\nonumber
k(n-k)+k-l&\text{if $k>l$}, 
\end{cases}\\
&=k(n-k)+\max\{0,k-l\}\\
\langle -w\lambda_k, \xi+\hat{\rho}\rangle&\leq 
\begin{cases}
k(n-k)-k& \text{if $k\leq n-l$},\\\nonumber
k(n-k)-(n-l)&\text{if $k>n-l$}
\end{cases}\\\nonumber
&=k(n-k)-\min\{k,n-l\}
\end{align}
\endgroup
with equalities only possible if $w=\id$. 
Using our hypotheses $m\ge 2$ and $0<\tau<1/n$, comparing \eqref{eq:oth} with \eqref{eq:listfaces} for $\tau'=\tau$ we obtain  $\xi+\hat{\rho}\in \Delta_{\tau}^{m,n}$. It remains to show condition (I), i.e.\ that $\xi+\hat{\rho}\not\in F_{w,k}^\pm$ for $(w,k)\neq (\id,n)$. By applying $c(-)$ to $\xi+\hat{\rho}$, $\Delta_{\tau'}^{m,n}$ 
we note  that if $\xi+\hat{\rho}\in \Delta_{\tau'}^{m,n}$ then $n\tau'\leq n-l\leq n\tau'+n$, or $-l/n\le \tau'\le (n-l)/n$. If this holds then we get from \eqref{eq:listfaces} and \eqref{eq:oth}, together with the assumption $m\ge 2$:
\begin{align*}
\langle w\lambda_k,f^+\rangle& \geq -\frac{l}{n}k+(1/2)mk(n-k)+k\\&\geq k(n-k)+{\rm max}\{0,k-l\}\geq\langle w\lambda_k,\xi+\hat{\rho}\rangle,\\
\langle -w\lambda_k,f^-\rangle& \geq -\frac{n-l}{n}k+(1/2)mk(n-k)\\&\geq k(n-k)-{\rm min}\{k,n-l\}\geq\langle -w\lambda_k,\xi+\hat{\rho}\rangle.
\end{align*}
If one of these chained inequalities is actually an equality  (which would be the case if $\xi+\hat{\rho}\in F^+_{w,k}\cup F^-_{w,k}$)
then it follows that $(w,k)=(\id,n)$. Indeed in both equations the middle inequality
can only be an equality if $k=n$, while the last inequality
can only be an  equality if $w=\id$, as mentioned after \eqref{eq:oth}.
Hence the claim follows.


\item We need to prove that $\Hom_{H_{m,n}}(\Vscr(\xi),\Vscr(\xi'))=0$ when  $\xi,\xi'\in (\ZZ^n)^+\cap (\Delta^{m,n}_{\tau}-\hat{\rho})$ and $c(\xi)>c(\xi')$. By  Lemma \ref{lem:restriction}
  we have
  \begin{align*}
    \Hom_{H_{m,n}}(\Vscr(\xi),\Vscr(\xi'))&=\Hom_{W}(V(\xi)\otimes \Oscr_{W}, V(\xi')\otimes \Oscr_{W})^G\\
                                         &=(\Hom(V(\xi),V(\xi'))\otimes \Gamma(W))^G\\
                                         &\subset (\Hom(V(\xi),V(\xi'))\otimes \Gamma(W))^{Z(G)}\\
    &=0
  \end{align*}
  where in the fourth line we use that fact that the weights of $\Gamma(W)$ with respect to $Z(G)=\CC^\ast$ are $\le 0$ and by hypothesis $c(\xi)>c(\xi')$.
\item This is proved by a similar computation. Assume $c(\xi)=c(\xi')$
  \begin{align*}
       \Hom_{H_{m,n}}(\Vscr(\xi),\Vscr(\xi'))&=\Hom_{W}(V(\xi)\otimes \Oscr_{W}, V(\xi')\otimes \Oscr_{W})^G\\
                                         &=(\Hom(V(\xi),V(\xi'))\otimes \Gamma(W))^G\\
                                         &=(\Hom(V(\xi),V(\xi'))\otimes \Gamma(W)^{Z(G)})^{\bar{G}}\\
                                         &=(\Hom(V(\xi),V(\xi'))\otimes \Gamma(W_0))^{\bar{G}}.\qedhere
    \end{align*}
 \end{enumerate}
\end{proof}
Recall that if a reductive group acts on a variety $X$ then the stable locus $X^s\subset X$ is the set of points with closed orbit and finite stabilizer.
\begin{lemma} \label{lem:inverse} The inverse image in $W_0$ of the Azumaya locus $U$ of~$\TT_{m,n}$ in $\Spec Z_{m,n}$ is equal to the $\bar{G}$-stable locus $W^s_0$ in $W_0$. The stabilizer of
  every point in $W_0^s$ is trivial. 
\end{lemma}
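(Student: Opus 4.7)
The plan is to identify both the preimage $\pi^{-1}(U)$ of the Azumaya locus (under the GIT quotient $\pi:W_0\to \Spec Z_{m,n}$) and the stable locus $W_0^s$ with the \emph{absolutely irreducible locus}
\[
W_0^{\mathrm{ai}}=\{\mathbf{A}=(A_1,\ldots,A_m)\in W_0 \mid \CC\langle A_1,\ldots,A_m\rangle=M_n(\CC)\},
\]
and to observe that the $\bar{G}$-stabilizer of every point of $W_0^{\mathrm{ai}}$ is trivial. This reduces the lemma to two classical strands in the invariant theory of $m$-tuples of matrices.

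For the stable-locus side I would proceed in three steps. First, the stabilizer of $\mathbf{A}$ in $G=\GL_n(\CC)$ equals the unit group $C(\mathbf{A})^\times$ of the commutant $C(\mathbf{A})=\{B\in M_n(\CC)\mid BA_i=A_iB\ \forall i\}$, so the $\bar{G}$-stabilizer is $C(\mathbf{A})^\times/\CC^\times$. Since $C(\mathbf{A})^\times$ is Zariski-open in the affine space $C(\mathbf{A})$, this quotient is finite iff it is trivial iff $C(\mathbf{A})=\CC$. Second, by a classical theorem of Artin the $G$-orbit of $\mathbf{A}$ is closed iff the corresponding representation of $\CC\langle x_1,\dots,x_m\rangle$ is semisimple; when semisimple, the isotypic decomposition $\CC^n=\bigoplus V_i^{n_i}$ gives $C(\mathbf{A})\cong \prod_i M_{n_i}(\CC)$, so $C(\mathbf{A})=\CC$ exactly when the representation is simple. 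Third, Burnside's theorem over the algebraically closed field $\CC$ turns ``simple'' into $\CC\langle \mathbf{A}\rangle=M_n(\CC)$. Combining these three points gives $W_0^s=W_0^{\mathrm{ai}}$ together with triviality of stabilizers on that locus.

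For the Azumaya-locus side I would invoke the classical results of Artin \cite{ArtinAzumaya} and Procesi \cite{Proc} (see \cite[\S1.4.5, \S4.1]{SVdB}): the trace ring $\TT_{m,n}$ is Azumaya of rank $n^2$ precisely at the points of $\Spec Z_{m,n}$ parametrizing absolutely irreducible representations, and these points lift along $\pi$ to exactly $W_0^{\mathrm{ai}}$. Combined with the previous step this yields $\pi^{-1}(U)=W_0^s$, and the triviality of stabilizers has already been noted.

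The main obstacle is really only bookkeeping: one has to cite the right form of the Artin--Procesi theorem identifying the Azumaya locus with absolutely irreducible representations, and verify that the dichotomy ``$\dim C(\mathbf{A})=1$ versus $\dim C(\mathbf{A})\geq 2$'' really forces ``finite stabilizer'' to mean ``trivial stabilizer''. Everything else is a tidy combination of Schur, Burnside, and the standard GIT description of closed orbits for representations of the free algebra.
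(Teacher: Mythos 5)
Your proof is correct and follows the same overall strategy as the paper's: invoke Artin's theorem to identify the Azumaya locus with the image of the simple representations, identify the $\bar{G}$-stable locus with the simple representations, and then check that stabilizers of stable points are trivial. The only difference is that where the paper simply cites Le~Bruyn--Procesi for ``simple $=$ stable'' and Brion for ``stabilizers in quiver representation spaces are connected, hence finite implies trivial,'' you re-derive both facts on the spot via the commutant: the $G$-stabilizer of a point is the unit group $C(A)^\times$ of the commutant, so the $\bar{G}$-stabilizer is $C(A)^\times/\CC^\times$; since $C(A)^\times$ is dense open in the affine space $C(A)$, this quotient is connected, hence finite iff trivial iff $C(A)=\CC$; combining Artin's closed-orbit criterion with Wedderburn's description of the commutant of a semisimple representation (and Schur/Burnside over $\CC$) shows this happens exactly on the simple locus. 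This makes the argument self-contained at the cost of a few extra lines, but it is the same route.
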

\begin{proof}
  By Artin's Theorem \cite[Theorem (8.3)]{ArtinAzumaya} the Azumaya locus of $\Spec Z_{m,n}$ corresponds to the simple $m$-dimensional representations of the $n$-loop quiver. It is well-known
  that this is precisely the stable locus \cite{MR958897}. If $V$ is a representation of a quiver $Q$, considered as a point in the representation space of $Q$,
  then its stabilizer  is connected (see \cite[Propositon 2.2.1]{Brion2}). 
  It follows that a stable representation has trivial stabilizer.
\end{proof}
\begin{remark} Note that Lemma \ref{lem:inverse} and the results below that depend on it, only have content for $m\ge 2$ since for $m=0,1$ we have $U=\emptyset$.
\end{remark}
Let $c\in \ZZ$. We say that a representation of $G$ has color $c$ if $Z(G)$ acts with character $c$. Note that if $\xi\in (\ZZ^n)^+$ has color $c$ then so does $V(\xi)$.
\begin{lemma} Let $V$, $V'$ be two non-zero $G$-representations with the same color. Then the restrictions to $U$ of $(\End(V)\otimes \Gamma(W_0))^{\bar{G}}$ and $(\End(V')\otimes \Gamma(W_0))^{\bar{G}}$ are Morita equivalent.
\end{lemma}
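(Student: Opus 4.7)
The plan is to realize both algebras as sheaves associated to a common principal $\bar{G}$-bundle on $U$, and to exhibit an explicit Morita bimodule between them. By Lemma \ref{lem:inverse} the $\bar{G}$-action on $W_0^s$ is free with geometric quotient $U$, so $W_0^s\r U$ is a principal $\bar{G}$-bundle. Consequently, for any $\bar{G}$-representation $M$ the associated-bundle construction yields a locally free $\Oscr_U$-module $\Mscr$ of rank $\dim M$ whose global sections are $(M\otimes \Gamma(W_0^s))^{\bar{G}}$; when $M$ is a $\bar{G}$-algebra, $\Mscr$ is a sheaf of $\Oscr_U$-algebras. Since $\bar{G}$ is reductive, invariants commute with localization to the $\bar{G}$-invariant open subset $U\subset \Spec Z_{m,n}$, so this sheaf is canonically identified with the restriction $(M\otimes \Gamma(W_0))^{\bar{G}}|_U$.

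The color-$c$ hypothesis forces $Z(G)=\CC^\ast$ to act trivially on $\End(V)$, on $\End(V')$ and on $V\otimes (V')^\ast$, so all three descend to honest $\bar{G}$-representations. I would then let $\Ascr_V$, $\Ascr_{V'}$ denote the Azumaya algebras on $U$ associated to $\End(V)$, $\End(V')$ (their global sections recovering the two algebras in the statement), and $\Pscr$ the locally free sheaf associated to $V\otimes (V')^\ast$. The $\bar{G}$-equivariant evaluation maps
\[
\End(V)\otimes \bigl(V\otimes (V')^\ast\bigr) \r V\otimes (V')^\ast,\qquad \bigl(V\otimes (V')^\ast\bigr)\otimes \End(V')\r V\otimes (V')^\ast
\]
equip $\Pscr$ with a canonical $\Ascr_V$--$\Ascr_{V'}$-bimodule structure.

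It then remains to check that $\Pscr$ implements a Morita equivalence, i.e.\ that $\Pscr\otimes_{\Ascr_{V'}}\Pscr^\vee\cong \Ascr_V$ and $\Pscr^\vee\otimes_{\Ascr_V}\Pscr\cong \Ascr_{V'}$, where $\Pscr^\vee$ is the sheaf associated to $V^\ast\otimes V'$. These are isomorphisms of sheaves on $U$ and may be verified after pullback along the principal bundle $W_0^s\r U$, where they reduce to the fiberwise linear-algebra statement that $V\otimes (V')^\ast$ is a Morita bimodule between $\End(V)$ and $\End(V')$ (via the canonical isomorphisms $V^\ast\otimes_{\End(V)}V\cong \CC$ and $V\otimes V^\ast\cong \End(V)$). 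The main subtlety is keeping the descent bookkeeping clean along the non-Zariski-locally-trivial $\PGL_n$-bundle $W_0^s\r U$; once that is in place, everything follows formally from the principal-bundle description together with the color-$c$ hypothesis, which is precisely what makes $V\otimes (V')^\ast$ a $\bar{G}$-representation in the first place.
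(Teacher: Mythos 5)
Your proposal is correct and takes essentially the same route as the paper: both use the $\bar{G}$-equivariant Morita bimodule $\Hom(V',V)=V\otimes(V')^\ast$ (made $\bar{G}$-equivariant precisely by the equal-color hypothesis) and descend it along the principal $\bar{G}$-bundle $W_0^s\r U$ from Lemma \ref{lem:inverse}. The paper states this in two sentences as a descending "nondegenerate Morita context"; you unpack the same descent via the associated-sheaf construction and verification after pullback, which is a fine (if more verbose) way to justify the same claim.
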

\begin{proof} There is a $\bar{G}$-equivariant nondegenerate Morita context between $\End(V)\otimes \Gamma(W_0)$ and $\End(V')\otimes \Gamma(W_0)$ given by $\Hom(V,V')\otimes \Gamma(W_0)$ and $\Hom(V',V)\otimes \Gamma(W_0)$. When restricted to $W_0^s$ this descends to a non-degenerate Morita context between the restrictions of  $(\End(V)\otimes \Gamma(W_0))^{\bar{G}}$ and $(\End(V')\otimes \Gamma(W_0))^{\bar{G}}$.
\end{proof}
\begin{corollary} \label{cor:restriction} Assume that $V$ is a $G$-representation with color $c$. Then the restriction of $(\End(V)\otimes \Gamma(W_0))^{\bar{G}}$ to $U$ is Morita equivalent to the restriction of
  $\TT_{m,n}^{\otimes c}$ to $U$.
\end{corollary}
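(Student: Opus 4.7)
The plan is to use the preceding lemma to reduce to a concrete choice of $V$, then identify the resulting invariant algebra directly with a tensor power of $\TT_{m,n}$ on the Azumaya locus. Since the standard representation $\CC^n$ of $G=\GL_n(\CC)$ has color $1$, set
\[
V_0 := \begin{cases} (\CC^n)^{\otimes c} & \text{if } c \geq 0, \\ ((\CC^n)^\ast)^{\otimes(-c)} & \text{if } c < 0, \end{cases}
\]
which is a non-zero $G$-representation of color $c$. Applying the preceding lemma to $V$ and $V_0$, the restrictions to $U$ of $(\End(V)\otimes \Gamma(W_0))^{\bar G}$ and $(\End(V_0)\otimes \Gamma(W_0))^{\bar G}$ are Morita equivalent, so one may replace $V$ by $V_0$. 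Using $\End(W^\ast)\cong \End(W)^{op}$, one has an algebra identification $\End(V_0)\cong M_n(\CC)^{\otimes c}$, where the tensor factors are understood to be $M_n(\CC)^{op}$ when $c<0$.

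Next, since $Z(G)$ acts trivially on $\End(V_0)$ (it has color $0$) and on $\Gamma(W_0)$ ($G$ acts on $W_0$ by conjugation), $G$- and $\bar G$-invariants coincide in every expression below. By Lemma~\ref{lem:inverse}, $\bar G$ acts freely on $\pi^{-1}(U)=W_0^s$ with trivial stabilizers, so $W_0^s\to U$ is an étale-locally-trivial principal $\bar G$-bundle. Standard étale descent along this bundle gives, for any $G$-representations $M_1,\ldots,M_c$,
\[
\bigl((M_1\otimes\cdots\otimes M_c)\otimes\Gamma(W_0)\bigr)^G\big|_U \;=\; \bigl(M_1\otimes\Gamma(W_0)\bigr)^G\big|_U \otimes_{Z_{m,n}|_U} \cdots \otimes_{Z_{m,n}|_U} \bigl(M_c\otimes\Gamma(W_0)\bigr)^G\big|_U,
\]
both sides being the algebra of sections over $U$ of the associated bundle $W_0^s\times^{\bar G}(M_1\otimes\cdots\otimes M_c)$. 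Specializing each $M_i$ to $M_n(\CC)$ (or to $M_n(\CC)^{op}$ when $c<0$) yields
\[
(\End(V_0)\otimes \Gamma(W_0))^{\bar G}\big|_U \;\cong\; \TT_{m,n}^{\otimes c}\big|_U,
\]
with the convention $\TT_{m,n}^{\otimes c}:=(\TT_{m,n}^{op})^{\otimes(-c)}$ for $c<0$; since $\TT_{m,n}|_U$ is Azumaya, this represents the Brauer-group inverse, so Morita equivalence with the $c$-th Brauer power of $\TT_{m,n}|_U$ persists for all $c\in\ZZ$.

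The genuinely nontrivial step is the displayed ``invariants of tensor products equal tensor products of invariants'' identity, which relies crucially on the freeness statement from Lemma~\ref{lem:inverse}; everything else is bookkeeping of colors and of the Morita equivalence already supplied by the preceding lemma.
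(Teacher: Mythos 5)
Your proposal is correct and follows essentially the same route as the paper: apply the preceding Morita-equivalence lemma with $V'=(\CC^n)^{\otimes c}$ (resp.\ $((\CC^n)^\ast)^{\otimes(-c)}$) and then identify the resulting invariant algebra with $\TT_{m,n}^{\otimes c}$ over the Azumaya locus. The paper compresses this into the single remark that $(\CC^n)^{\otimes c}$ has color $c$, leaving the descent identification of $(\End((\CC^n)^{\otimes c})\otimes\Gamma(W_0))^{\bar G}\big|_U$ with $\TT_{m,n}^{\otimes c}\big|_U$ (and the sign convention for $c<0$) implicit, whereas you spell these steps out.
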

\begin{proof} This follows from the fact that $(\CC^n)^{\otimes c}$ has color $c$.
\end{proof}
\begin{proof}[Proof of Proposition \ref{prop:sod}]
The proof follows by combining Proposition \ref{prop:tiling}, Lemma \ref{lem:homs} and Corollary \ref{cor:restriction}.   
  \end{proof}
\appendix
\section{A second proof of the numerical claim in  Corollary~\ref{cor:cor1}}
We recall some results on the combinatorics of zonotopes.
\subsection{Volumes of zonotopes and lattice points}
We first recall that any zonotope can be tiled by elementary (cubical) zonotopes.

\begin{proposition}\cite[\S5]{Shephard} \label{prop:mcmullen}
Assume $Z$ is a full dimensional zonotope in $V$ of the form
  $t+\sum_{i\in H} [0,\beta_i]$ with $t\in V$ and $(\beta_i)_i\in V$. Then $Z$ can be tiled
by elementary zonotopes of the form $t+\sum_{i\in S} \beta_i+\sum_{j\in B}[0,\beta_j]$
for $S,B\subset H$, with every $B$ such that
$(\beta_i)_{i\in B}$ is a basis of $V$ occurring exactly once and such that moreover every facet 
of an elementary zonotope that appears in the tiling is contained in the translation of a facet of $Z$ by a composition of translations
by $\pm\beta_k$ for $k\in H$.
\end{proposition}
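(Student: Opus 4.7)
My approach follows the classical strategy going back to Shephard: realize $Z$ as the image of a high-dimensional cube under a linear projection, then use a generic height function to produce a polyhedral section whose cells project to the desired elementary zonotopes.

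After translating by $-t$ I may assume $t=0$. Enumerate $H=\{1,\dots,N\}$ and let $\pi\colon\RR^N\to V$ be the linear map with $\pi(e_i)=\beta_i$, so that $Z=\pi(C)$ for $C=[0,1]^N$. The plan is to choose a linear functional $\phi\in(\RR^N)^*$ in generic position relative to $\ker\pi$ (meaning $\phi$ does not vanish on any nonzero integer vector in $\ker\pi$; this holds outside countably many hyperplanes) and define the upper surface
\[
U_\phi=\{y\in C\mid \phi(y)\ge\phi(y')\text{ for every }y'\in C\cap\pi^{-1}(\pi(y))\}.
\]
Since each fiber $C\cap\pi^{-1}(v)$ is a compact polytope and $\phi$ is generic, the restriction of $\phi$ to this fiber attains its maximum at a unique vertex, so $\pi$ restricts to a piecewise-linear bijection $U_\phi\to Z$.

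Next I would identify the maximal cells of $U_\phi$. A vertex of the fiber polytope $C\cap\pi^{-1}(v)$ is obtained by making $N-\dim V$ of the constraints $y_i\in\{0,1\}$ active in such a way that the remaining coordinates are uniquely determined by $\pi(y)=v$; the set $B\subset H$ of free coordinates must then satisfy that $(\beta_i)_{i\in B}$ is a basis of $V$. For each such $B$, genericity of $\phi$ forces a unique choice of values $\epsilon(B)=(\epsilon_i(B))_{i\notin B}\in\{0,1\}^{H\setminus B}$ on the fixed coordinates to achieve the maximum of $\phi$ (determined by the signs of $\phi$ on the associated circuits in $\ker\pi$). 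Over the corresponding region of $Z$ the cell of $U_\phi$ is therefore the parallelepiped
\[
\bigl\{y\in C\mid y_i=\epsilon_i(B)\text{ for }i\notin B,\ y_j\in[0,1]\text{ for }j\in B\bigr\},
\]
whose image under $\pi$ is exactly the elementary zonotope
\[
\sum_{i\notin B,\,\epsilon_i(B)=1}\beta_i+\sum_{j\in B}[0,\beta_j],
\]
with each basis $B$ accounted for exactly once.

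The facet condition drops out of the construction: any facet of such a cell comes from fixing one further coordinate $y_k$ to $0$ or $1$, i.e.\ from a facet of the cube $C$ of the form $\{y_k=\epsilon\}$; its image under $\pi$ is a translate, by a sum of $\pm\beta_j$'s, of a facet of $Z$. The main technical obstacle I anticipate is verifying carefully that the cells of $U_\phi$ meet along common faces and cover $Z$ without overlap, so that one obtains an honest polyhedral tiling rather than merely a set-theoretic decomposition. This amounts to tracking how the combinatorial type of the $\phi$-maximising vertex of $C\cap\pi^{-1}(v)$ changes as $v$ crosses the walls of the induced subdivision, and it is precisely at these wall crossings that the genericity of $\phi$ is used to rule out ambiguity.
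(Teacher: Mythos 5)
The paper offers no proof of Proposition~\ref{prop:mcmullen}; it is cited directly to Shephard. Your sketch follows a standard and legitimate route to this result, namely projecting the cube $C=[0,1]^N$ and extracting the upper surface determined by a generic linear functional $\phi$ --- this is the construction underlying the Bohne--Dress correspondence between fine zonotopal tilings of $Z$ and generic single-element lifts of the point configuration $(\beta_i)_i$. The outline is sound. However, the gap you flag at the end is genuine and is the real mathematical content: you must actually show that the set of $v\in Z$ whose $\phi$-maximising vertex of $C\cap\pi^{-1}(v)$ has free set $B$ and boundary pattern $\epsilon(B)$ is precisely $\pi(P_B)$ where $P_B=\{y\in C\mid y_i=\epsilon_i(B)\ \text{for}\ i\notin B\}$, and that these pieces cover $Z$ with disjoint interiors and meet face-to-face. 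The cleanest way to do so is to observe that $U_\phi$ is the graph over $Z$ of the concave piecewise-linear function $v\mapsto\max\{\phi(y)\mid y\in C,\ \pi(y)=v\}$, that its domains of linearity are the $\pi(P_B)$, and that genericity of $\phi$ (no vanishing on a nonzero vector of $\ker\pi$) makes these domains full-dimensional parallelepipeds in bijection with the bases $B$. As written, your proposal states this should be checked but does not carry it out, so the proof is incomplete rather than incorrect.

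One further imprecision worth fixing: in the facet verification you assert that the $\pi$-image of a facet of a cell ``is a translate, by a sum of $\pm\beta_j$'s, of a facet of $Z$.'' It is only \emph{contained} in such a translate --- which is exactly what the proposition claims. The facet of $Z$ in the supporting direction $\lambda$ (with $\ker\lambda=\mathrm{span}(\beta_j\mid j\in B\setminus\{k\})$) is the subzonotope generated by \emph{all} $\beta_i$ with $\lambda(\beta_i)=0$, of which the cell's facet uses only the $\dim V-1$ generators in $B\setminus\{k\}$. The translation taking the relevant facet of $Z$ to a set containing the cell's facet is
\[
\sum_{i\notin B,\ \epsilon_i(B)=1}\beta_i+\epsilon\beta_k-\sum_{\lambda(\beta_i)>0}\beta_i,
\]
which is indeed a $\pm\beta_i$-combination.
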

From this we can deduce a formula for the number of lattice points of $Z$ in case there are no lattice points on the boundary.
\begin{proposition}\label{prop:vol_lat_shifted} Let $M\subset V$ be a lattice and let $Z\subset V$ be a translation of a full dimensional
zonotope with vertices in $M$. Then
\[
|Z\cap M|\ge \Vol_M(Z)
\]
with equality if $\partial Z\cap M=\emptyset$.
\end{proposition}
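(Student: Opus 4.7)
The plan is to upgrade the tiling of $Z$ from Proposition~\ref{prop:mcmullen} to a \emph{disjoint} decomposition by half-open parallelepipeds, each of which is a translated fundamental domain for a full-rank sublattice of $M$ and hence contains exactly as many lattice points as its volume.

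\medskip

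Write $Z = t + \sum_{i \in H}[0,\beta_i]$ with $\beta_i \in M$, which is possible because the un-translated zonotope has vertices in $M$. By Proposition~\ref{prop:mcmullen}, $Z$ admits a tiling by closed elementary parallelepipeds
\[
P_B \;=\; t + \sum_{i \in S_B}\beta_i + \sum_{j \in B}[0,\beta_j],
\]
one for each basis $B \subseteq H$ of $V$ (the shift data $S_B \subseteq H$ is supplied by the tiling). Choose a generic $v \in V$ with $\langle v,\beta_j^\ast\rangle \neq 0$ for the dual basis $(\beta_j^\ast)_{j\in B}$ of every such $B$, and define $P_B^v$ to be $P_B$ with those of its facets whose outward normal pairs positively with $v$ removed. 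Equivalently,
\[
P_B^v \;=\; t + \sum_{i\in S_B}\beta_i + \sum_{j\in B} I_j\beta_j,
\]
where $I_j = [0,1)$ if $\langle v,\beta_j^\ast\rangle > 0$ and $I_j = (0,1]$ otherwise. Since $(\beta_j)_{j\in B}$ is a basis of $V$, the set $P_B^v$ is a translate of a fundamental domain for the sublattice $\Lambda_B = \sum_{j\in B}\ZZ\beta_j \subseteq M$; viewing it through the quotient $V \to V/\Lambda_B$ gives
\[
|P_B^v \cap M| \;=\; [M:\Lambda_B] \;=\; \Vol_M(P_B).
\]

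\medskip

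I claim the $P_B^v$ are pairwise disjoint and satisfy $\bigsqcup_B P_B^v = Z \setminus \partial^v Z$, where $\partial^v Z$ is the union of those facets of $Z$ through whose outward side $v$ points. Indeed, whenever two tiles share a facet their outward normals along it are opposite, so the facet is removed from exactly one of the two half-open tiles; the analogous statement for shared faces of higher codimension follows inductively. By the last assertion of Proposition~\ref{prop:mcmullen}, every tile facet not shared with a neighbour is contained in a facet of $Z$, and is retained in $P_B^v$ precisely when $v$ points inward through it. Summing,
\[
|Z \cap M| \;\geq\; \sum_B |P_B^v \cap M| \;=\; \sum_B \Vol_M(P_B) \;=\; \Vol_M(Z),
\]
and when $\partial Z \cap M = \emptyset$ we also have $\partial^v Z \cap M = \emptyset$, so no lattice points are missed and the inequality is an equality.

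\medskip

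The delicate step is the disjointness-and-cover claim: one must check that the generic $v$ induces compatible half-open choices not only on shared facets but also on all shared faces of higher codimension. This follows inductively from the codimension-one case via genericity of $v$, and it is the only part of the argument that requires real care; the remainder is a direct lattice-point count on fundamental parallelepipeds combined with the volume additivity provided by Proposition~\ref{prop:mcmullen}.
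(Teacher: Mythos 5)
Your proof is correct and takes essentially the same approach as the paper, namely tiling $Z$ by elementary parallelepipeds via Proposition~\ref{prop:mcmullen} and counting lattice points tile by tile. The paper's proof is so terse (``Proposition~\ref{prop:mcmullen} reduces us to the case that $Z$ is an elementary zonotope. This case is trivial.'') that it omits the half-open decomposition, which is precisely the standard device you supply to make that reduction rigorous, in particular for the inequality when $\partial Z \cap M \neq \emptyset$.
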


\begin{proof} 
By shifting $Z$ slightly (which does not increase $|Z\cap M|$) we may ensure that~$M$ does not intersect any facet of the tiling elementary zonotopes.
Then  Proposition \ref{prop:mcmullen} reduces us to the case that $Z$ is an elementary
zonotope which is trivial. 
\end{proof}
Combining this with Stanley's formula \cite[Theorem 2.2]{Stanley1} for $\Vol_M(Z)$ we  obtain a formula for $|Z\cap M|$ in case $\partial Z\cap M=\emptyset$.
\begin{corollary}\label{thm:stanley}\cite[Theorem 2.2]{Stanley1}
Let the setting be as in Proposition \ref{prop:vol_lat_shifted}. 
Then $|Z\cap M|=\sum_S h(S)$ where $S$ ranges over all maximal linearly independent subsets of $\{\beta_i\mid i\in H\}$ and $h(S)$ is (the absolute value of) the
volume (with respect to $M$) of the parallelepiped spanned by $\beta_i\in S$.  
\end{corollary}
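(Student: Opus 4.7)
The plan is to derive this corollary by simply composing the two inputs named in the paragraph preceding the statement: Proposition~\ref{prop:vol_lat_shifted} and Stanley's volume formula. Since the equality of the right-hand side with $\Vol_M(Z)$ is explicitly attributed to \cite[Theorem~2.2]{Stanley1}, I would treat that formula as a black-box and use it to rewrite the volume appearing in Proposition~\ref{prop:vol_lat_shifted}.

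Concretely, I would proceed in two steps. First, I would invoke Proposition~\ref{prop:vol_lat_shifted}: by hypothesis $Z$ is a translation of a full-dimensional zonotope with vertices in $M$ satisfying $\partial Z\cap M=\emptyset$, so the proposition gives the equality $|Z\cap M|=\Vol_M(Z)$, reducing the lattice-point count to a volume computation. Second, I would apply \cite[Theorem~2.2]{Stanley1}, which expresses $\Vol_M(Z)$ as $\sum_S h(S)$ summed over the maximal linearly independent subsets $S\subset\{\beta_i\mid i\in H\}$, with $h(S)$ the $M$-volume of the parallelepiped spanned by the $\beta_i\in S$. Chaining the two equalities yields the corollary.

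There is essentially no technical obstacle here beyond the two cited inputs. The only point worth verifying is that both sides are translation-invariant, so the fact that $Z$ is only a \emph{translate} of a zonotope in the sense of Proposition~\ref{prop:vol_lat_shifted} causes no issue: the lattice-point count is translation-invariant once the boundary hypothesis is in force, the volume is manifestly translation-invariant, and the combinatorial data $\{\beta_i\mid i\in H\}$ that governs the right-hand side depends only on the zonotope up to translation.
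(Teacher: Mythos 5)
Your proof is correct and is exactly the argument the paper intends: the corollary is obtained by chaining the equality $|Z\cap M|=\Vol_M(Z)$ from Proposition~\ref{prop:vol_lat_shifted} (using the boundary hypothesis $\partial Z\cap M=\emptyset$) with Stanley's volume formula \cite[Theorem~2.2]{Stanley1}. Your remark about translation invariance is a sensible sanity check but not needed beyond what the proposition already supplies.
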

\subsection{Group action on zonotopes}
\begin{lemma}\label{lem:fixed_point_zonotope} 
Let $Z=t+\sum_{i\in I} [0,\beta_i]$ be a zonotope in $V$. Let~$G$
be a finite group of affine automorphisms of $V$ which preserves $Z$ and let $G^0$ be the underlying
linear group.
The invariant polytope $Z^G$ is the zonotope
in $V^G$ given by
\begin{equation}
  \label{eq:averaging}
Z^G=\bar{t}+\sum_{i\in I} [0,\hat{\beta}_i]
\end{equation}
where $\bar{?}$, $\hat{?}$ denotes averaging for respectively $G$ and $G^0$; i.e. 
$\bar{u}=(1/|G|)\sum_{\sigma \in G} \sigma(u)$.
$\hat{u}=(1/|G|)\sum_{\sigma \in G^0} \sigma(u)$.
\end{lemma}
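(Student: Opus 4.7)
The plan is to realise $Z^G$ as the image of $Z$ under the affine averaging map $\pi:V\to V$, $z\mapsto \bar z$, and then to compute that image directly from the zonotope presentation of $Z$.

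First, I would unpack $\pi$ as an affine operator. Writing each $\sigma\in G$ as $\sigma(z)=A_\sigma z+b_\sigma$ with linear part $A_\sigma\in G^0$, one has
\[
\pi(z)=\left(\frac{1}{|G|}\sum_{\sigma\in G}A_\sigma\right)\!z+\frac{1}{|G|}\sum_{\sigma\in G}b_\sigma,
\]
so $\pi$ is affine with $\pi(t)=\bar t$ and linear part $\pi^0$ that sends $v\in V$ to $\hat v$. (This uses that the map $G\to G^0$ is a bijection, which is implicit in the statement of the lemma, so that the two averaging normalisations agree.)

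Second, I would verify that $\pi(Z)=Z^G$. The image of $\pi$ is contained in $V^G$, since $\tau\circ\pi=\pi$ for every $\tau\in G$. Because $Z$ is $G$-stable and convex, each $\pi(z)=(1/|G|)\sum_{\sigma}\sigma(z)$ is a convex combination of points of $Z$ and hence lies in $Z$; therefore $\pi(Z)\subseteq V^G\cap Z=Z^G$. Conversely, any $z\in Z^G$ satisfies $\sigma(z)=z$ for all $\sigma$, so $\pi(z)=z$, giving $Z^G\subseteq\pi(Z)$.

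Third, since affine maps commute with translation and with Minkowski sums via their linear parts, I compute
\[
\pi(Z)=\pi\!\left(t+\sum_{i\in I}[0,\beta_i]\right)=\bar t+\sum_{i\in I}\bigl[0,\pi^0(\beta_i)\bigr]=\bar t+\sum_{i\in I}[0,\hat\beta_i],
\]
which is \eqref{eq:averaging}. There is no serious obstacle beyond the identification of $\pi^0$ with the $G^0$-averaging operator $\hat{?}$; once this is recognised, both the fixed-point description and the zonotope computation are immediate from convexity and affine-linearity of $\pi$.
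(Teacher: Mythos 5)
Your proof is correct and follows essentially the same route as the paper's: identify $Z^G$ with the image of $Z$ under the convex/affine averaging map, then push the zonotope presentation through that affine map. The paper states this more tersely; your write-up merely makes explicit the identification of the linear part of the averaging map with $\hat{?}$ (and your observation that $G\to G^0$ is injective is in fact automatic, since a nontrivial translation cannot preserve the bounded polytope $Z$).
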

\begin{proof}
Since $Z$ is preserved by $G$ and convex it is closed under averaging. Hence 
\[
Z^G=\{\bar{u}\mid u\in Z\}.
\]
We now use the linearity properties of averaging to obtain \eqref{eq:averaging}.
 \end{proof}
 \begin{corollary} \label{cor:lattice}
   Fix a lattice $M$ in $V$ and let $Z$
 be a translation of a lattice zonotope in $V$. Let~$G$
be a finite group of affine automorphisms of $V$ which preserves $Z$.
Then $Z^G$ is the translation of a lattice zonotope.
\end{corollary}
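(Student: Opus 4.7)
My plan is to apply Lemma \ref{lem:fixed_point_zonotope} and then exhibit an appropriate lattice in the affine fixed subspace $\bar t + V^{G^0}$ that contains all the averaged defining vectors. Writing $Z = t + \sum_{i\in I}[0,\beta_i]$ with $\beta_i\in M$, the lemma gives
\[
Z^G = \bar t + \sum_{i\in I}[0,\hat\beta_i],
\]
with each $\hat\beta_i \in V^{G^0}$. So the problem reduces to producing a lattice $N\subset V^{G^0}$ that contains every $\hat\beta_i$; then $Z^G - \bar t$ will be a zonotope with defining vectors in $N$, and $Z^G$ will be a translation of it.

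The subtlety is that the given lattice $M$ need not be $G^0$-stable, so the averages $\hat\beta_i = (1/|G^0|)\sum_{g\in G^0} g(\beta_i)$ typically lie outside $M$. I would fix this by replacing $M$ with the $G^0$-orbit lattice
\[
L := \sum_{g\in G^0} g M,
\]
which is again a lattice in $V$ (a finite sum of full-rank discrete subgroups) and is $G^0$-stable by construction. Then $L^{G^0} := L\cap V^{G^0}$ is a lattice in $V^{G^0}$; its full rank follows from the standard argument that averaging a basis of $V^{G^0}$, suitably approximated by elements of $L$, yields $|G^0|$-multiples of vectors in $L^{G^0}$ that still span $V^{G^0}$.

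Finally, since $\beta_i\in M\subset L$ and $L$ is $G^0$-stable, the sum $\sum_{g\in G^0} g(\beta_i)$ lies in $L^{G^0}$, hence $\hat\beta_i\in N$ for the lattice
\[
N := \tfrac{1}{|G^0|} L^{G^0} \subset V^{G^0}.
\]
This gives the corollary: $Z^G$ is the translation by $\bar t$ of the lattice zonotope $\sum_{i\in I}[0,\hat\beta_i]$ in $V^{G^0}$ with respect to $N$.

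There is no real obstacle here; the entire content is choosing the correct lattice. The denominator $1/|G^0|$ coming from averaging is unavoidable, and passing from $M$ to $L$ and then to $(1/|G^0|)L^{G^0}$ precisely absorbs it.
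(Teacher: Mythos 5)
Your approach correctly identifies Lemma \ref{lem:fixed_point_zonotope} as the starting point, but the proof as written has a gap and, more importantly, proves a result that is too weak for the way the corollary is used.

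First, a technical issue: your claim that $L := \sum_{g\in G^0} g M$ ``is again a lattice in $V$ (a finite sum of full-rank discrete subgroups)'' is not automatic. A finite sum of full-rank discrete subgroups of $V$ need not be discrete (e.g.\ $\ZZ + \sqrt{2}\,\ZZ$ in $\RR$). To see that $L$ \emph{is} a lattice you already need the key observation you are missing (see below): since $G$ preserves $Z$, $G^0$ permutes the edge vectors of $Z$, so $G^0$ stabilizes the full-rank sublattice $M'\subset M$ that they generate; then each $gM/M'$ is finite and $L/M'$ is a finite sum of finite subgroups of the compact group $V/M'$, hence finite.

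Second, and more seriously: even granting that $L$ is a lattice, your argument only lands the averaged vectors $\hat\beta_i$ in $N = (1/|G^0|)L^{G^0}$, a lattice that in general strictly contains $M^{G^0}:=M\cap V^{G^0}$. The corollary is invoked in Lemma \ref{lem:invco} precisely in order to apply Proposition \ref{prop:vol_lat_shifted} and Corollary \ref{thm:stanley} with respect to the lattice $(\ZZ^n)^{\Sscr}=M^{G^0}$, so the defining vectors must lie in $M^{G^0}$, not in a proper superlattice. Concretely, for $M=\ZZ^2$, $G^0=S_2$, and $\beta=(1,0)$, one gets $\hat\beta=(1/2,1/2)\notin M^{G^0}=\ZZ(1,1)$, and no enlargement of $M$ to $L$ helps here since $M$ is already $G^0$-stable.

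The idea you are missing, which also shows that your closing sentence (``the denominator $1/|G^0|$ \ldots is unavoidable'') is false, is the following. Write $Z$ in the canonical symmetrized form $Z = t + \sum_{j\in J}[0,\beta_j/2]$ where $\{\beta_j\}_{j\in J}=\{\pm\gamma_i\}$ is the multiset of $\pm$ edge vectors; this multiset lies in $M$ and is preserved by $G^0$, even though $M$ itself need not be. Lemma \ref{lem:fixed_point_zonotope} then gives $Z^G = t + \sum_{j\in J/G^0}[0,\tilde\beta_j/2]$ where $\tilde\beta_j=\sum_{\beta_k\in G^0\beta_j}\beta_k$ is the \emph{orbit sum} (not the average), which lies in $M\cap V^{G^0}=M^{G^0}$. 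The residual factor of $1/2$ is then eliminated by pairing each orbit with its negative: $\widetilde{(-\beta_j)}=-\tilde\beta_j$, so $[0,\tilde\beta_j/2]+[0,-\tilde\beta_j/2]$ is a translate of $[0,\tilde\beta_j]$, and orbits with $G^0(-\beta_j)=G^0(\beta_j)$ contribute $\tilde\beta_j=0$. This produces $Z^G = s + \sum[0,\tilde\beta_j]$ with $\tilde\beta_j\in M^{G^0}$, which is the statement actually needed.
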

\begin{proof} 
Let $\gamma_i$ be the vectors corresponding to the
  edges of $Z$. By definition $\gamma_i\in M$. 
The
  $\gamma_i$ are only determined up to sign but they yield a
  canonical multiset
  $\{\beta_j\mid j\in J\}:=\{\pm \gamma_i\mid i \in I\}\subset M$.
It follows from Remark \ref{rem:edges} that $Z$ may be written as:
  \[
    Z=t+\sum_{j\in J} [0,\beta_j/2]
  \]
  for suitable $t$. Note that $\{\beta_j\mid j\in J\}$ is preserved by $G^0$ (since the set of edges is preserved by $G$).
  It follows that $t$ is preserved by $G$. 
  By Lemma \ref{lem:fixed_point_zonotope} we have
  \begin{equation}
    \label{lem:canonicalinvariants}
    Z^G=t+\sum_{j\in J} [0,\hat{\beta}_j/2].
  \end{equation}
We consider $G^0$ as acting on $J$.  The right-hand side of \eqref{lem:canonicalinvariants}
can be written as
  \[
    Z^G=t+\sum_{j\in J/G^0} [0,\tilde{\beta}_j/2]
  \]
  where $\tilde{\beta}_j=\sum_{\beta_k\in G^0(\beta_j)} \beta_k\in M$. We have
  $\widetilde{(-\beta_j)}=-\tilde{\beta}_j$ and if $G^0(-\beta_j)=G^0(\beta_j)$
  then $\tilde{\beta}_j=0$. Thus we obtain
  \[
        Z^G=s+\sum_{j\in (J/G^0)/{\pm}} [0,\tilde{\beta}_j]
      \]
      for suitable $s$. Hence $Z^G$ is indeed a translated lattice zonotope.
\end{proof}
\subsection{The M\"obius function of set partitions}
Let $S$ be a finite set. We consider the poset $\Pi_S$ 
of partitions $\Sscr=\{S_1,\ldots,S_t\}$ of $S$, ordered
by refinement (e.g. $\{S',S\setminus S'\}<\{S\}$ for $S'\subset S$). Part of the corresponding M\"obius function \cite[p.7]{Stanleybook}
is given by
\[
\mu(\Sscr):=\mu(\bigl\{\{1\},\ldots,\{n\}\bigr\},\Sscr)=\prod_{S\in \Sscr} \mu(S)
\]
with 
\[
\mu(S)=(-1)^{|S|-1}(|S|-1)!.
\]
Or summarizing
\[
\mu(\Sscr)= (-1)^{n-|\Sscr|} \prod_{S\in \Sscr}
(|S|-1)!.
\]
\subsection{Lattice points in $\Delta^{m,n}_\tau$ and spanning trees}
\label{sec:associating graphs}
In section and the next we assume that~$\tau$ is admissible (see Definition
\ref{def:admissible}).  Let $\Vscr$ be the multiset
$\{(e_i-e_j)^m\mid 1\leq j<i\leq n\}\cup \{e_i\mid 1\leq i\leq
n\}$ (i.e. each $e_i-e_j$, $1\leq j<i\leq n$ in the multiset appears $m$ times). Then $\Delta^{m,n}_\tau$ is a translation of the lattice zonotope
$\sum_{f\in \Vscr}[0,f]$. Hence we may appy Corollary
\ref{thm:stanley} to compute $|\Delta^{m,n}_\tau\cap \ZZ^n|$. Thus we need to
find all subsets of~$n$ linearly independents elements from $\Vscr$.

To this end we create an undirected graph $G$ with vertices $\{0,\ldots,n\}$ with an edge  between $i$ and $j$ for  each vector $e_i-e_j$ in $\Vscr$ 
and an edge between $0$ and $i$ for each vector $e_i$ in $\Vscr$.

Put $[n]=\{1,\ldots,n\}$. So summarizing $G$ is the graph with vertices
$\{0,\ldots,n\}$ and edges
\begin{itemize}
\item $m$ edges between $i$ and $j$ for every $i\neq j\in [n]$,
\item 1 edge between $0$ and $i$ for every $i\in[n]$.
\end{itemize}
 We  obtain the following correspondence that follows by construction and the definition of spanning trees. 
\begin{lemma}\label{lem:lin_ind:sp_trees}
The above correspondence between the elements in $\Vscr$ and edges in $G$ gives a  bijective correspondence between the subsets of $n$ linearly independent elements from $\Vscr$ and the set of spanning trees in $G$. 
\end{lemma}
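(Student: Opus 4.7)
The plan is to recognize the stated correspondence as a special case of the classical bijection between bases of a graphic matroid and spanning trees of the underlying graph. First, I would embed the situation into $\RR^{n+1}$: introduce a formal extra basis vector $e_0$ and identify $\RR^n$ with $\RR^{n+1}/\RR e_0$ by setting $e_0=0$. Under this identification every vector in $\Vscr$ takes the uniform form $e_i-e_j$ for some $0\le j<i\le n$, namely $e_i-e_j$ for the vectors coming from pairs in $[n]$ and $e_i=e_i-e_0$ for the vectors corresponding to edges incident to $0$. The correspondence defined just before the lemma thus assigns to each oriented edge of $G$ its signed incidence vector in $\RR^{n+1}/\RR e_0$.

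Next, I would establish the key linear-algebraic claim: a sub-multiset $T$ of the edges of $G$ yields linearly dependent incidence vectors if and only if $T$ contains a cycle in $G$. The forward direction is the classical telescoping: a cycle $i_1,i_2,\ldots,i_k,i_1$ gives the relation $(e_{i_1}-e_{i_2})+(e_{i_2}-e_{i_3})+\cdots+(e_{i_k}-e_{i_1})=0$, with the signs adjusted according to edge orientation. For the converse, any nontrivial dependence $\sum_{e\in T}c_e(e_{i(e)}-e_{j(e)})=0$ in $\RR^{n+1}/\RR e_0$ lifts to a relation $\sum_{e\in T}c_e(e_{i(e)}-e_{j(e)})=\lambda e_0$ in $\RR^{n+1}$; flow-conservation at each vertex other than $0$ then forces the support of this $1$-chain to contain a closed walk, and hence a cycle. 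This is the standard identification of the cycle space of $G$ with the kernel of the boundary map from $1$-chains to $0$-chains. Note that a pair of parallel edges between $i,j\in[n]$ gives two copies of the same vector $e_i-e_j$, which is dependent precisely because those parallel edges form a cycle of length $2$ in the multigraph, so the multiplicity coming from the $m$ parallel edges in $G$ causes no trouble.

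Finally, since $G$ has $n+1$ vertices, any acyclic set of $n$ edges is automatically a spanning tree, because a forest with $n$ edges on $n+1$ vertices has exactly one connected component. Combining this with the previous step yields the desired bijection between $n$-element linearly independent sub-multisets of $\Vscr$ and spanning trees of $G$. The only real subtlety is fixing the orientation convention in the embedding step; once that is in place, the argument reduces to the standard matroid-theoretic fact and presents no genuine obstacle.
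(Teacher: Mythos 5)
Your proposal is correct, and it is the same argument the paper has in mind: the paper gives no explicit proof, merely stating that the lemma ``follows by construction and the definition of spanning trees,'' which is exactly the graphic-matroid fact (bases of the cycle matroid of $G$ are the spanning trees) that you make precise by passing to incidence vectors in $\RR^{n+1}/\RR e_0$, identifying linear dependence with the presence of a cycle, and noting that an acyclic set of $n$ edges on $n+1$ vertices is a spanning tree. Your remark about parallel edges (multiplicity $m$) giving $2$-cycles is a worthwhile sanity check that the multiset structure causes no difficulty.
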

 For a graph $H$ we denote by $\tau(H)$ the set of its spanning trees.
\begin{corollary}\label{cor:Pspantree}
We have
\[|\tau(G)|=|\Delta^{m,n}_\tau\cap \ZZ^n|.
\]
\end{corollary}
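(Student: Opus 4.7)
The plan is to apply Corollary \ref{thm:stanley} to compute $|\Delta^{m,n}_\tau\cap\ZZ^n|$ as a sum of elementary volumes indexed by the bases in $\Vscr$, then use Lemma \ref{lem:lin_ind:sp_trees} to reindex this sum over $\tau(G)$, and finally check that every summand equals $1$.

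First, because $\tau$ is admissible, Lemma \ref{lem:admissible} gives $\partial(\Delta^{m,n}_\tau)\cap\ZZ^n=\emptyset$. Since $\Delta^{m,n}_\tau$ is a translation of the lattice zonotope $\sum_{f\in\Vscr}[0,f]$, Corollary \ref{thm:stanley} (applied with $M=\ZZ^n$) yields
\[
|\Delta^{m,n}_\tau\cap\ZZ^n|=\sum_S h(S),
\]
where $S$ runs over the maximal linearly independent subsets of $\Vscr$ and $h(S)$ is the lattice volume of the parallelepiped spanned by the elements of $S$. By Lemma \ref{lem:lin_ind:sp_trees}, the index set of this sum is canonically in bijection with $\tau(G)$; write $S(T)\subset\Vscr$ for the set of $n$ vectors associated with a spanning tree $T$.

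Thus it suffices to show $h(S(T))=1$ for every $T\in\tau(G)$. Equivalently, the $n\times n$ matrix $B(T)$ whose columns are the vectors in $S(T)$ must have $|\det B(T)|=1$. By construction, $B(T)$ is (up to signs of columns) the reduced incidence matrix of the tree $T$ obtained by deleting the row indexed by the vertex $0$, where the edge $\{0,i\}$ of $G$ contributes the column $e_i=e_i-e_0$ (with the $e_0$-row removed) and the edge $\{i,j\}$ contributes $\pm(e_i-e_j)$.

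The fact that the reduced incidence matrix of a spanning tree has determinant $\pm 1$ is classical and admits a one-line inductive proof: in a tree on more than one vertex, pick a leaf $v\neq 0$; the column corresponding to the unique edge at $v$ has a single nonzero entry (namely $\pm 1$) in the row indexed by $v$, so Laplace expansion along that column reduces the determinant (up to sign) to the determinant of the reduced incidence matrix of the smaller spanning tree obtained by deleting $v$. Plugging $h(S(T))=1$ into the sum gives $|\Delta^{m,n}_\tau\cap\ZZ^n|=|\tau(G)|$.

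There is no real obstacle here; the only thing to be careful about is the indexing convention (deleting vertex $0$ versus deleting some other vertex) when identifying $B(T)$ with a reduced incidence matrix, which is purely bookkeeping.
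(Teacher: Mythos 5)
Your proof is correct and follows the same strategy as the paper: apply Corollary \ref{thm:stanley}, reindex via Lemma \ref{lem:lin_ind:sp_trees}, and observe that each parallelepiped has unit volume. You merely flesh out the last step (that $|\det B(T)|=1$, via leaf-peeling on the reduced incidence matrix) which the paper states without elaboration.
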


\begin{proof}
  We apply Corollary \ref{thm:stanley}. We may do this since $\partial (\Delta^{m,n}_\tau)\cap \ZZ^n=\emptyset$. Note that
  the volumes of the parallelepipeds (given by suitable minors) are all equal to~$1$. Hence Lemma \ref{lem:lin_ind:sp_trees} implies the 
equality. 
\end{proof}
For every set partition $\Sscr$ of $[n]$ we denote by $G/\Sscr$ 
the  contracted graph, obtained by shrinking all subgraphs connecting vertices in
the same $S\in \Sscr$ to a point
 (note the special role of the vertex $0$, which is never contracted).

\subsection{Regular orbits and spanning trees}
Recall that $S_n$ acts on $\Delta^{m,n}_\tau$. 
We extend Corollary \ref{cor:Pspantree} to $G/\Sscr$.
Denote by~$H_\Sscr$
the stabilizer of $\Sscr$, i.e.
$H_\Sscr=\{g\in S_n\mid gS=S, S\in\Sscr\}$.
We write $(-)^\Sscr$ and $(-)_\Sscr$ for respectively the invariants and the coinvariants under $H_\Sscr$. 
Concretely if
$L_\Sscr=\sum_{i,j\in S\in \Sscr}\ZZ(e_i-e_j)$ then
$\RR^n_{\Sscr}=\RR^n/(L_\Sscr)_\RR$, $\ZZ^n_{\Sscr}=\ZZ^n/L_\Sscr$. We denote
by $({\Delta^{m,n}_\tau})_\Sscr$ the image of ${\Delta^{m,n}_\tau}$ in
$\RR^n_{\Sscr}$.
Finally we put
${}_{\Sscr}(\Delta^{m,n}_\tau)=\{x\in {\Delta^{m,n}_\tau}\mid{\rm
  Stab}(x)=H_\Sscr\}$.
\begin{lemma}\label{lem:invco} We have
  \[
 |(\Delta^{m,n}_\tau)_\Sscr \cap (\ZZ^n)_{\Sscr}|=   \left(\prod_{S\in \Sscr}|S|\right) |(\Delta^{m,n}_\tau)^\Sscr \cap (\ZZ^n)^{\Sscr}|.
    \]
  \end{lemma}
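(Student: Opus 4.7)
The plan is to reduce both sides to volumes via Proposition \ref{prop:vol_lat_shifted} and relate them through an explicit linear isomorphism. Let $p:\RR^n\to (\RR^n)^\Sscr$ denote averaging over $H_\Sscr$ and $\pi:\RR^n\to \RR^n_\Sscr$ the canonical projection. Writing $f_S=\sum_{i\in S}e_i$ for the natural basis of $(\ZZ^n)^\Sscr$ and $\bar e_S=\pi(e_i)$ (any $i\in S$) for the natural basis of $(\ZZ^n)_\Sscr$, the linear map $\Phi:(\RR^n)^\Sscr\to \RR^n_\Sscr$ given by $\Phi(f_S)=|S|\bar e_S$ satisfies $\pi=\Phi\circ p$ and sends the lattice $(\ZZ^n)^\Sscr$ into $(\ZZ^n)_\Sscr$ with determinant $\prod_{S\in \Sscr}|S|$ in these bases. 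Since $\Delta^{m,n}_\tau$ is $H_\Sscr$-invariant and convex, $p(\Delta^{m,n}_\tau)=(\Delta^{m,n}_\tau)^\Sscr$, and hence $(\Delta^{m,n}_\tau)_\Sscr=\Phi((\Delta^{m,n}_\tau)^\Sscr)$.

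By Corollary \ref{cor:lattice}, $(\Delta^{m,n}_\tau)^\Sscr$ is a translated lattice zonotope in $((\RR^n)^\Sscr,(\ZZ^n)^\Sscr)$; applying $\Phi$ shows $(\Delta^{m,n}_\tau)_\Sscr$ is likewise a translated lattice zonotope in $(\RR^n_\Sscr,(\ZZ^n)_\Sscr)$. Provided neither zonotope carries lattice points on its boundary, Proposition \ref{prop:vol_lat_shifted} delivers $|(\Delta^{m,n}_\tau)^\Sscr\cap(\ZZ^n)^\Sscr|=\Vol((\Delta^{m,n}_\tau)^\Sscr)$ and $|(\Delta^{m,n}_\tau)_\Sscr\cap(\ZZ^n)_\Sscr|=\Vol((\Delta^{m,n}_\tau)_\Sscr)$, and the volume transformation under $\Phi$ produces the factor $\prod_S|S|$ asserted in the lemma.

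The invariant case is immediate: any boundary point of $(\Delta^{m,n}_\tau)^\Sscr$ in $(\RR^n)^\Sscr$ is also a boundary point of $\Delta^{m,n}_\tau$ in $\RR^n$ (the interior of $\Delta^{m,n}_\tau$ meets $(\RR^n)^\Sscr$ only in the relative interior of $(\Delta^{m,n}_\tau)^\Sscr$), and $(\ZZ^n)^\Sscr\subseteq \ZZ^n$, so Lemma \ref{lem:admissible} applies. The real obstacle is the coinvariant case. To address it I plan to classify the extremal supporting functionals of the image zonotope $(\Delta^{m,n}_\tau)_\Sscr$ from its defining vectors $\bar e_S$ and $(m/2)(\bar e_{S_1}-\bar e_{S_2})$: requiring $\ker \ell$ to be spanned by such vectors and to have codimension one forces $\ell=\pm\lambda_B$, where $\lambda_B:=\sum_{S\in B}\bar e_S^*$ and $B$ is a non-empty subset of $\Sscr$. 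Lifting, $\ell\circ \pi = \pm w\lambda_k$ for some $w\in S_n$ and $k=\sum_{S\in B}|S|\in\{1,\ldots,n\}$, and the value of $\ell$ on the corresponding facet of $(\Delta^{m,n}_\tau)_\Sscr$ coincides with that of $\pm w\lambda_k$ on $F^\pm_{w,k}\subset\partial\Delta^{m,n}_\tau$ as recorded in \eqref{eq:listfaces}. A lattice point $\bar y=\pi(y)\in(\ZZ^n)_\Sscr$ on this facet would force the facet value to equal $(\ell\circ\pi)(y)\in\ZZ$, and the computation in \S\ref{sec:admissible} then yields $\tau-m(n-1)/2\in (1/k)\ZZ$ with $1\le k\le n$, contradicting admissibility.
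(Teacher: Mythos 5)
Your proof is correct and follows essentially the same route as the paper's: compare lattice counts to volumes via Proposition~\ref{prop:vol_lat_shifted} (the paper invokes this through Corollary~\ref{thm:stanley}), observe that $(\RR^n)^\Sscr\to(\RR^n)_\Sscr$ is a linear bijection sending $(\Delta^{m,n}_\tau)^\Sscr$ onto $(\Delta^{m,n}_\tau)_\Sscr$, and extract the factor $\prod_{S\in\Sscr}|S|$ from the index of $(\ZZ^n)^\Sscr$ in $(\ZZ^n)_\Sscr$ (you realize this via the explicit map $\Phi$, the paper via the cokernel order). The one place you add genuine content is the coinvariant boundary case: the paper simply says ``Likewise using the proof of Lemma~\ref{lem:admissible}'' and states the resulting condition on $\tau$, whereas you actually classify the facet functionals of $(\Delta^{m,n}_\tau)_\Sscr$ as $\pm\lambda_B$ for nonempty $B\subset\Sscr$, lift them to $\pm w\lambda_k$ with $k=\sum_{S\in B}|S|\le n$, and run the \S\ref{sec:admissible} computation to reach the contradiction with admissibility. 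That is a welcome elaboration of a step the paper compresses, and it is correct: the key point, which you state, is that the supporting value of $\ell$ on its facet of $(\Delta^{m,n}_\tau)_\Sscr$ agrees with the value of $\pm w\lambda_k$ on the corresponding facet of $\Delta^{m,n}_\tau$, so a lattice point $\pi(y)$ on that facet forces this value into $\ZZ$.
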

  \begin{proof}
    Since $\partial((\Delta^{m,n}_\tau)^\Sscr)\subset \partial (\Delta^{m,n}_\tau)$ and $\tau$ is admissible we have
    $\partial((\Delta^{m,n}_\tau)^\Sscr) \cap (\ZZ^n)^{\Sscr}=\emptyset$. Likewise using the proof of Lemma \ref{lem:admissible} in \S\ref{sec:admissible} we see that
$\partial (({\Delta^{m,n}_\tau})_\Sscr)\cap \ZZ^n_{\Sscr}=\emptyset$ whenever $\tau-m(n-1)/2$ can be written as a fraction whose denominator is not a sum of
cardinalities of elements of $\Sscr$. Since we have assumed that $\tau$ is admissible, this holds in our case.

We apply  Corollary \ref{thm:stanley} to conclude that $|(\Delta^{m,n}_\tau)_\Sscr \cap (\ZZ^n)_{\Sscr}|=\Vol_{ (\ZZ^n)_{\Sscr}}((\Delta^{m,n}_\tau)_\Sscr)$.
We note that  $(\Delta^{m,n}_\tau)^\Sscr$ is a translate lattice polytope by Corollary \ref{cor:lattice}.
Hence by applying Corollary \ref{thm:stanley} again we obtain 
$|(\Delta^{m,n}_\tau)^\Sscr \cap (\ZZ^n)^{\Sscr}|=\Vol_{ (\ZZ^n)^{\Sscr}}((\Delta^{m,n}_\tau)^\Sscr)$. 

We note that $(\Delta^{m,n}_\tau)^\Sscr\to (\Delta^{m,n}_\tau)_\Sscr$ obtained from $(\RR^n)^\Sscr\to (\RR^n)_\Sscr$ is a bijection, while the cokernel
of $(\ZZ^n)^\Sscr\hookrightarrow (\ZZ^n)_\Sscr$ has order ${\prod_{S\in \Sscr}|S|}$. (Indeed, the cokernel is isomorphically mapped to $\prod_{S\in \Sscr}(\ZZ/|S|\ZZ)$ where the unit  $e_i$ in the $i$-th component maps to 
$\prod_{S\in \Sscr}\delta_{iS}$.) Hence
\begin{multline*}
|(\Delta^{m,n}_\tau)_\Sscr \cap (\ZZ^n)_{\Sscr}|=\Vol_{ (\ZZ^n)_{\Sscr}}((\Delta^{m,n}_\tau)_\Sscr)=\\\left(\prod_{S\in \Sscr}|S|\right) \Vol_{ (\ZZ^n)^{\Sscr}}((\Delta^{m,n}_\tau)^\Sscr)=\left(\prod_{S\in \Sscr}|S|\right)|(\Delta^{m,n}_\tau)^\Sscr \cap (\ZZ^n)^{\Sscr}|.\qedhere
\end{multline*}
\end{proof}

\begin{corollary}\label{cor:Pspantreequotient}
We have
\begin{align*}
  |\tau(G/\Sscr)|=
  \left(\prod_{S\in \Sscr}|S|\right) \sum_{\Sscr'\geq \Sscr}  |{}_{{\Sscr'}}(\Delta^{m,n}_\tau) \cap \ZZ^n|.
\end{align*}
\end{corollary}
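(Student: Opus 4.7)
The plan is to run the same lattice-point-versus-spanning-tree argument that produced Corollary~\ref{cor:Pspantree}, but now for the contracted graph $G/\Sscr$ and the coinvariant zonotope $(\Delta^{m,n}_\tau)_\Sscr \subset \RR^n_\Sscr$, and then translate between coinvariants and invariants using Lemma~\ref{lem:invco} and a partition of fixed points by stabilizer.

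First I would establish the analogue of Corollary~\ref{cor:Pspantree} in the contracted setting, namely
\[
|\tau(G/\Sscr)| \;=\; |(\Delta^{m,n}_\tau)_\Sscr \cap (\ZZ^n)_\Sscr|.
\]
The point is that $(\Delta^{m,n}_\tau)_\Sscr$ is a translated lattice zonotope in $(\RR^n)_\Sscr$: the defining vectors of $\Delta^{m,n}_\tau$ that lie in $(L_\Sscr)_\RR$ (those $e_i-e_j$ with $i,j$ in the same block of $\Sscr$) get killed, while the remaining ones project to vectors $f_S - f_T$ or $f_S$, which correspond exactly to the edges of $G/\Sscr$ (with the correct multiplicities $m|S||T|$ and $|S|$ respectively). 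Spanning trees of $G/\Sscr$ are then in bijection with maximal linearly independent subsets of these vectors, and a direct triangular-matrix computation shows that the corresponding parallelepipeds each have volume $1$ with respect to $(\ZZ^n)_\Sscr$. Since admissibility of $\tau$ gives $\partial((\Delta^{m,n}_\tau)_\Sscr)\cap (\ZZ^n)_\Sscr=\emptyset$ (this was already verified inside the proof of Lemma~\ref{lem:invco}), Corollary~\ref{thm:stanley} yields the displayed identity.

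Next, I would apply Lemma~\ref{lem:invco} to switch from coinvariants to invariants:
\[
|(\Delta^{m,n}_\tau)_\Sscr \cap (\ZZ^n)_\Sscr|
\;=\;\Bigl(\prod_{S\in \Sscr}|S|\Bigr)\, |(\Delta^{m,n}_\tau)^\Sscr \cap (\ZZ^n)^\Sscr|.
\]
It then remains to decompose the right-hand set by stabilizer type. An integer point $x\in \Delta^{m,n}_\tau\cap \ZZ^n$ has stabilizer of the form $H_{\Sscr'}$, where $\Sscr'$ is the partition of $[n]$ into the level sets of the coordinates of $x$; and $x$ is fixed by $H_\Sscr$ precisely when $H_{\Sscr'}\supseteq H_\Sscr$, equivalently when $\Sscr'\geq \Sscr$ in the refinement order. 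Hence
\[
|(\Delta^{m,n}_\tau)^\Sscr \cap (\ZZ^n)^\Sscr|
\;=\;\sum_{\Sscr'\geq \Sscr}|{}_{\Sscr'}(\Delta^{m,n}_\tau)\cap \ZZ^n|,
\]
and combining the three displays gives the claim.

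The main obstacle is the first step: one has to be careful that the quotient zonotope $(\Delta^{m,n}_\tau)_\Sscr$ really does satisfy all the hypotheses of Corollary~\ref{thm:stanley} with respect to the quotient lattice $(\ZZ^n)_\Sscr$, in particular that the elementary parallelepipeds continue to have unit volume and that the boundary avoids the lattice. The unit-volume statement is the place where it matters that $G/\Sscr$ is built out of $m$-fold parallel edges between distinct blocks together with single edges from $0$: nothing fancier appears, so a standard incidence-matrix argument on any spanning tree suffices. The boundary condition was already pinned down in the proof of Lemma~\ref{lem:invco} using admissibility of $\tau$, so no new combinatorics on $\tau$ is required.
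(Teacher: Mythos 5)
Your argument is correct and matches the paper's proof: establish $|\tau(G/\Sscr)|=|(\Delta^{m,n}_\tau)_\Sscr \cap (\ZZ^n)_\Sscr|$ by redoing the spanning-tree/Stanley-formula argument of Corollary~\ref{cor:Pspantree} in the contracted setting (using the boundary-avoidance fact already established inside the proof of Lemma~\ref{lem:invco}), then convert to invariants via Lemma~\ref{lem:invco} and sum over stabilizer strata $\Sscr'\ge\Sscr$. You spell out the final stabilizer-stratification step, which the paper compresses into ``This follows from Lemma~\ref{lem:invco},'' but the route is the same.
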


\begin{proof}
During the proof of Lemma \ref{lem:invco} we have shown that
$\partial (({\Delta^{m,n}_\tau})_\Sscr)\cap \ZZ^n_{\Sscr}=\emptyset$.
%
It then follows as in Corollary  \ref{cor:Pspantree} that
$
 \tau(G/\Sscr)=|(\Delta^{m,n}_\tau)_\Sscr \cap (\ZZ^n)_{\Sscr}|
 $
 so that we must prove
 \[
|(\Delta^{m,n}_\tau)_\Sscr \cap (\ZZ^n)_{\Sscr}|=
  \left(\prod_{S\in \Sscr}|S|\right) \sum_{\Sscr'\geq \Sscr}  |{}_{{\Sscr'}}(\Delta^{m,n}_\tau) \cap \ZZ^n|.
   \]

This follows from Lemma \ref{lem:invco}.
\end{proof}
If $X$ is a set with an $S_n$-action then we write $\operatorname{reg}(X)$ for the set of regular orbits.
Applying M\"obius inversion we obtain the following formula for $|{\rm reg}({\Delta^{m,n}_\tau}\cap \ZZ^n)|$.
\begin{corollary}\label{cor:Mobius_inversion}
We have
\begin{equation}\label{eq:mobius}
|{\rm reg}({\Delta^{m,n}_\tau}\cap \ZZ^n)|= \frac{1}{n!}\sum_{\Sscr\in \Pi_{[n]}} \frac{1}{\prod_{S\in \Sscr} |S|}\mu(\Sscr) |\tau(G/\Sscr)|.
\end{equation}
\end{corollary}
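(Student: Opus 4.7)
The plan is to apply Möbius inversion on the partition lattice $\Pi_{[n]}$ to the identity provided by Corollary \ref{cor:Pspantreequotient}.

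First I would observe that a point $x\in \Delta^{m,n}_\tau\cap\ZZ^n$ lies in a regular $S_n$-orbit if and only if its stabilizer is trivial, i.e.\ if and only if $x\in{}_{\Sscr_0}(\Delta^{m,n}_\tau)$ where $\Sscr_0=\bigl\{\{1\},\ldots,\{n\}\bigr\}$ is the finest partition. Since each regular orbit has exactly $n!$ elements, we get
\[
|{\rm reg}(\Delta^{m,n}_\tau\cap \ZZ^n)| = \frac{1}{n!}\,|{}_{\Sscr_0}(\Delta^{m,n}_\tau)\cap \ZZ^n|.
\]

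Next I would rewrite Corollary \ref{cor:Pspantreequotient} in a form suitable for Möbius inversion. Setting
\[
f(\Sscr):=\frac{|\tau(G/\Sscr)|}{\prod_{S\in\Sscr}|S|},\qquad g(\Sscr'):=|{}_{\Sscr'}(\Delta^{m,n}_\tau)\cap \ZZ^n|,
\]
Corollary \ref{cor:Pspantreequotient} reads $f(\Sscr)=\sum_{\Sscr'\ge\Sscr} g(\Sscr')$ for every $\Sscr\in \Pi_{[n]}$. The Möbius inversion formula on the poset $\Pi_{[n]}$ (ordered by refinement) then gives
\[
g(\Sscr)=\sum_{\Sscr'\ge \Sscr}\mu(\Sscr,\Sscr')\,f(\Sscr').
\]
Specializing to $\Sscr=\Sscr_0$ and using the convention $\mu(\Sscr'):=\mu(\Sscr_0,\Sscr')$ recalled just before the statement, every $\Sscr'\in\Pi_{[n]}$ satisfies $\Sscr'\ge \Sscr_0$, so
\[
|{}_{\Sscr_0}(\Delta^{m,n}_\tau)\cap \ZZ^n| = \sum_{\Sscr\in \Pi_{[n]}} \mu(\Sscr)\,\frac{|\tau(G/\Sscr)|}{\prod_{S\in\Sscr}|S|}.
\]
Dividing by $n!$ yields the claimed formula.

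The only substantive point is verifying that the two hypotheses needed above really hold for admissible $\tau$: namely that the partition $\{{}_{\Sscr}(\Delta^{m,n}_\tau)\cap \ZZ^n\}_{\Sscr}$ of $\Delta^{m,n}_\tau\cap\ZZ^n$ by stabilizer type really does match the stratification used in Corollary \ref{cor:Pspantreequotient}, so that $f(\Sscr)=\sum_{\Sscr'\ge \Sscr}g(\Sscr')$ is literal. This is essentially tautological: a point $x\in\Delta^{m,n}_\tau\cap \ZZ^n$ has stabilizer containing $H_\Sscr$ iff $x\in (\Delta^{m,n}_\tau)^\Sscr\cap(\ZZ^n)^\Sscr$, iff $x\in{}_{\Sscr'}(\Delta^{m,n}_\tau)$ for some $\Sscr'\ge \Sscr$, and this decomposition is the one used in the proof of Corollary \ref{cor:Pspantreequotient}. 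The main (and essentially only) step beyond routine Möbius inversion is recognizing this identification, after which the formula is immediate.
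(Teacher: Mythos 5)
Your proposal is correct and matches the paper's proof essentially verbatim: identify $|{\rm reg}(\Delta^{m,n}_\tau\cap\ZZ^n)|$ with $(1/n!)\,|{}_{\Sscr_0}(\Delta^{m,n}_\tau)\cap\ZZ^n|$, set up the same $f$ and $g$ from Corollary \ref{cor:Pspantreequotient}, and apply M\"obius inversion on $\Pi_{[n]}$. The closing remark about the stabilizer-type stratification being tautological is a fair gloss on why the hypothesis $f(\Sscr)=\sum_{\Sscr'\ge\Sscr}g(\Sscr')$ holds, which the paper leaves implicit.
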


\begin{proof}
We have  $|{\rm reg}({\Delta^{m,n}_\tau}\cap \ZZ^n)|=(1/n!) |{}_{\Sscr_0}(\Delta^{m,n}_\tau)\cap \ZZ^n|$
with $\Sscr_0=\{\{1\},\ldots,\{n\}\}$.
Now we apply M\"obius inversion to
\[f,g:\Pi_{[n]}\to \QQ,\quad f:\Sscr\mapsto\frac{1}{\prod_{S\in \Sscr}|S|}|\tau(G/\Sscr)|,\quad g:\Sscr\mapsto 
|{}_{\Sscr}(\Delta^{m,n}_\tau)\cap \ZZ^n|
\]
using Corollary \ref{cor:Pspantreequotient}. 
\end{proof}
\subsection{Computing the sum over graphs}
In this section we complete the numeric claim in Corollary \ref{cor:cor1} by computing the sum on right-hand side of \eqref{eq:mobius}.
\begin{theorem} \label{th:mainth}
We have
\begin{equation}
\label{eq:graphsum}
\frac{1}{n!}\sum_{\Sscr\in \Pi_{[n]}} \frac{1}{\prod_{S\in \Sscr} |S|}\mu(\Sscr) |\tau(G/\Sscr)|
= \frac{1}{(m-1)n+1} {mn\choose n}.
\end{equation}
\end{theorem}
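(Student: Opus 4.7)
The plan is twofold: first reduce the left-hand side to a single sum over set partitions by computing $|\tau(G/\Sscr)|$ in closed form via the matrix-tree theorem, and then evaluate the resulting sum using the classical factorization of the characteristic polynomial of the partition lattice.

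For the first step, fix $\Sscr=\{S_1,\ldots,S_t\}$ and set $s_i=|S_i|$. The contracted multigraph $G/\Sscr$ has vertex set $\{0,1,\ldots,t\}$ with $s_i$ edges between $0$ and the $i$-th block vertex and $ms_is_j$ edges between the $i$-th and $j$-th block vertices for $i\neq j$. Deleting the row and column at $0$ in the weighted Laplacian yields the $t\times t$ matrix $M$ with $M_{ii}=s_i(1+m(n-s_i))$ and $M_{ij}=-ms_is_j$ for $i\neq j$. I would recognize this as the rank-one perturbation $M=D-m\,ss^T$ with $D=\diag(s_i(1+mn))$ and $s=(s_1,\ldots,s_t)^T$, so that the matrix determinant lemma gives
\[
\det M = \det D\cdot\bigl(1-m\,s^{T}D^{-1}s\bigr) = (mn+1)^t\Bigl(\prod_i s_i\Bigr)\Bigl(1-\frac{mn}{mn+1}\Bigr) = (mn+1)^{t-1}\prod_i s_i.
\]
Hence $|\tau(G/\Sscr)|=(mn+1)^{|\Sscr|-1}\prod_{S\in\Sscr}|S|$. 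A sanity check is that for the finest partition $\Sscr=\{\{1\},\ldots,\{n\}\}$ this specializes to $|\tau(G)|=(mn+1)^{n-1}$, which for $m=1$ recovers Cayley's formula $(n+1)^{n-1}$ for $K_{n+1}$.

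Substituting into \eqref{eq:graphsum}, the factor $\prod_{S\in\Sscr}|S|$ cancels and the left-hand side becomes
\[
\frac{1}{n!\,(mn+1)}\sum_{\Sscr\in\Pi_{[n]}}\mu(\Sscr)\,(mn+1)^{|\Sscr|}.
\]
For this sum I would invoke the classical identity $\sum_{\Sscr\in\Pi_{[n]}}\mu(\Sscr)\,x^{|\Sscr|}=x(x-1)(x-2)\cdots(x-n+1)$, which follows in one line from the exponential formula since the exponential generating function in $n$ of the left-hand side equals $\exp(x\log(1+t))=(1+t)^x$. Setting $x=mn+1$ and dividing by $n!\,(mn+1)$ cancels the leading factor and produces
\[
\frac{(mn)(mn-1)\cdots(mn-n+2)}{n!}=\frac{(mn)!}{n!\,(mn-n+1)!}=\frac{1}{(m-1)n+1}\binom{mn}{n},
\]
as required. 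The only genuinely non-routine step is the Laplacian computation; the rank-one structure of the perturbation is what makes the answer clean and drives the cancellation with $\prod_{S\in\Sscr}|S|$. After that, the partition-lattice identity is entirely classical, so no real obstacle should remain.
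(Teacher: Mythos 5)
Your proof is correct and follows essentially the same two-step structure as the paper's: apply Kirchhoff's matrix-tree theorem to obtain $|\tau(G/\Sscr)|=(mn+1)^{|\Sscr|-1}\prod_{S\in\Sscr}|S|$, and then evaluate the resulting sum over set partitions to get the falling-factorial expression $\frac{1}{n!}(X-1)\cdots(X-n+1)$ at $X=mn+1$. The two differences are presentational rather than structural. First, for the Laplacian cofactor the paper performs explicit row manipulations on the $(n+1)\times(n+1)$ Laplacian to reach a near-diagonal form, whereas you recognize the reduced Laplacian $M=D-m\,ss^{T}$ as a rank-one perturbation of a diagonal matrix and invoke the matrix determinant lemma, which is cleaner and makes the cancellation $1-m\,s^{T}D^{-1}s=1/(mn+1)$ transparent. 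Second, for the partition sum the paper reformulates it as a sum over compositions and proves Lemma \ref{lem:fun} by an exponential-generating-function computation, while you directly cite the classical identity $\sum_{\Sscr\in\Pi_{[n]}}\mu(\Sscr)x^{|\Sscr|}=x(x-1)\cdots(x-n+1)$ (the characteristic polynomial of the partition lattice); these are the same fact, and you correctly note the EGF derivation is one line. Your sanity check that the finest partition gives $(mn+1)^{n-1}$ and recovers Cayley's formula at $m=1$ is a nice touch. No gaps.
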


\begin{proof}
We use Kirchhoff's theorem which says that the number of spanning trees of a graph is the
absolute value of an arbitrary cofactor in the Laplacian matrix.
The Laplacian matrix of $G$ is the matrix of size $(n+1)\times (n+1)$ given by
\begin{equation}
\label{eq:lap1}
\begin{pmatrix}
-n & 1&1 &\cdots& 1\\
1& -(n-1)m-1 & m &\cdots & m\\
1& m & -(n-1)m-1 &\cdots & m\\
\vdots & \vdots & \vdots && \vdots\\
1&m&m&\cdots &-(n-1)m-1
\end{pmatrix}
\end{equation}
We will look at the cofactor consisting of the rows $[1:n]$ and columns $[2:(n+1)]$. 
By substracting $m$ times the top row $(1,\dots,1)$ from the other rows one finds that it is (up to sign)
the determinant of the square submatrix with rows $[1:n]$  in the following (rectangular) $(n+1)\times n$ matrix.
\begin{equation}
\label{eq:lap2}
\begin{pmatrix}
 1&1 &\cdots& 1\\
 -nm-1 & 0 &\cdots & 0\\
 0 & -nm-1 &\cdots & 0\\
 \vdots & \vdots && \vdots\\
0&0 &\cdots &-nm-1
\end{pmatrix}
\end{equation}
which is (up to sign)
\[
(nm+1)^{n-1}.
\]
Now we evaluate the corresponding cofactor for $G/\Sscr$ where $\Sscr$ is the set partition
\[
\{\{1,\ldots,n_1\},\{n_1+1,\ldots,n_1+n_2\},\ldots\}
\]
(thus the sizes of the parts are $n_1,n_2,\ldots,n_t$).
This amounts to replacing the top row and leftmost column in \eqref{eq:lap1} by 
\[
(-n,n_1,\ldots,n_t).
\]
In the lower left $n\times n$-matrix,
the $n_i\times n_j$ blocks are replaced by their sum. 
We obtain the following matrix
\[
\begin{pmatrix}
-n&n_1&n_2&\cdots &n_t\\
n_1&-n_1((n-n_1)m+1)&n_1n_2m&\cdots&n_1n_tm\\
n_2&n_2n_1m&-n_2((n-n_2)m+1)&\cdots&n_2n_tm\\
\vdots&\vdots&\vdots&&\vdots\\
n_t&n_tn_1m&n_tn_2m&\cdots&-n_t((n-n_t)m+1)
\end{pmatrix}
\]

The matrix corresponding to \eqref{eq:lap2}
becomes
\[
\begin{pmatrix}
 n_1&n_2 &\cdots& n_t\\
 -n_1(nm+1) & 0 &\cdots & 0\\
 0 & -n_2(nm+1) &\cdots & 0\\
 \vdots & \vdots && \vdots\\
 0&0 &\cdots &-n_{t}(nm+1)
\end{pmatrix}
\]
and now we have to calculate the determinant of the top $t$ rows. We find (up to sign)
\[
n_1\cdots n_t (nm+1)^{t-1}.
\]
Hence \eqref{eq:graphsum} becomes
\begin{align*}
\frac{1}{n!}\sum_{\Sscr\in \Pi_{[n]}}& \frac{1}{\prod_{S\in \Sscr} |S|}\mu(\Sscr)
 (mn+1)^{|\Sscr|-1}\prod_{S\in \Sscr} |S|\\
&=
\frac{(-1)^n}{n!}\sum_{\Sscr\in \Pi_{[n]}} (-1)^{|\Sscr|}
 (mn+1)^{|\Sscr|-1}\prod_{S\in \Sscr} (|S|-1)!\\
&=\frac{(-1)^n}{n!}\sum_{n_1+\ldots +n_t=n} (-1)^{t} \frac{1}{t!}\binom{n}{n_1\cdots n_t}
 (mn+1)^{t-1}\prod_{i} (n_i-1)!\\
&=(-1)^n\sum_{n_1+\ldots +n_t=n} (-1)^{t} \frac{1}{t!}
 (mn+1)^{t-1}\prod_{i=1}^t \frac{1}{n_i}
\end{align*}
(the factor $1/t!$ comes from the choice in enumerating the elements of $\Sscr$).
Putting $X=mn+1$ in the following lemma then finishes the proof of Theorem \ref{th:mainth}.
\end{proof}

\begin{lemma} \label{lem:fun} We have
\begin{equation}
\label{eq:identity}
(-1)^n\sum_{n_1+\ldots +n_t=n} (-1)^{t} \frac{1}{t!}
 X^{t-1}\prod_{i=1}^t \frac{1}{n_i}
=\frac{1}{n!}(X-(n-1))\cdots (X-1).
\end{equation}
\end{lemma}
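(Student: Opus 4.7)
The plan is to prove \eqref{eq:identity} by an exponential generating function argument. The key observation is that if one sets $L(z):=-\log(1-z)=\sum_{k\ge 1} z^k/k$, then expanding $L(z)^t$ gives
\[
L(z)^t=\sum_{n_1,\ldots,n_t\ge 1}\frac{z^{n_1+\cdots+n_t}}{n_1\cdots n_t},
\]
so that $[z^n]L(z)^t=\sum_{n_1+\cdots+n_t=n,\,n_i\ge 1}\prod_i 1/n_i$, which is precisely the inner sum appearing on the left-hand side of \eqref{eq:identity}.

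Next, I would multiply by $(-1)^tX^{t-1}/t!$ and sum over $t\ge 1$. Pulling the coefficient extraction outside, the left-hand side of \eqref{eq:identity} becomes
\[
(-1)^n[z^n]\sum_{t\ge 1}\frac{(-X)^tL(z)^t}{X\cdot t!}=\frac{(-1)^n}{X}[z^n]\bigl(e^{-XL(z)}-1\bigr).
\]
Since $-L(z)=\log(1-z)$, the exponential collapses to $(1-z)^X$, so the left-hand side of \eqref{eq:identity} equals $\frac{(-1)^n}{X}[z^n]\bigl((1-z)^X-1\bigr)$.

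Finally, for $n\ge 1$ the binomial series gives $[z^n](1-z)^X=(-1)^n\binom{X}{n}=(-1)^n X(X-1)\cdots(X-n+1)/n!$, and combining this with the factor $(-1)^n/X$ in front yields $(X-1)(X-2)\cdots(X-n+1)/n!$, which is exactly the right-hand side of \eqref{eq:identity}.

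I do not expect any serious obstacle: once the combinatorial identity for $[z^n]L(z)^t$ is in hand, the proof is a one-line manipulation of formal power series. An alternative route would be to note that both sides are polynomials in $X$ of degree $n-1$ and check agreement at $n$ values (e.g.\ $X=1,2,\ldots,n-1$, where the right-hand side vanishes, combined with one nonzero value), but the generating function argument is both cleaner and more informative.
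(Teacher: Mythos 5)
Your proof is correct and is essentially the same generating-function argument as the paper's: the paper works with the variable $u$ and the series $\log(1+u)=\sum_{n\ge 1}-\frac{(-u)^n}{n}$, while you work with $z=-u$ and $L(z)=-\log(1-z)$, but the manipulation collapsing the sum over $t$ to $X^{-1}\bigl((1\pm\cdot)^X-1\bigr)$ and then reading off the binomial coefficient is identical.
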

\begin{proof}
To compute the lefthand side we need the coefficient of $u^n$ in the sum
\begin{align*}
\sum_{t\ge 1} \sum_{(n_i)_{i}\in \NN_{>0}^t} (-1)^{t}(-u)^{\sum_{i=1}^t n_i} \frac{1}{t!}
 X^{t-1}\prod_{i=1}^t \frac{1}{n_i}
&= 
\sum_{t\ge 1}\frac{X^{t-1}}{t!} \sum_{(n_i)_{i}\in\NN_{>0}^t} \prod_{i=1}^t \left(-\frac{(-u)^{n_i}}{n_i}\right)\\
&=\sum_{t\ge 1} \frac{X^{t-1}}{t!}\prod_{i=1}^t
\sum_{n\ge 1}\left(-\frac{(-u)^{n}}{n}\right)\\
&=\sum_{t\ge 1} \frac{X^{t-1}}{t!}\log(1+u)^t\\
&=X^{-1} (\exp(X\log(1+u))-1)\\
&=X^{-1} ((1+u)^{X}-1)\\
&=\sum_{n\ge 1} u^n \frac{(X-1)\cdots(X-(n-1))}{n!}.
\end{align*}
Hence the sought coefficient is precisely the righthand side of \eqref{eq:identity}. This finishes the proof.
\end{proof}

\subsection{Second proof of the numeric claim in Corollary \ref{cor:cor1}}\label{subsec:proof2}
The claim that $|{\rm reg}({\Delta^{m,n}_\tau}\cap \ZZ^n)|=A_n(m,1)$ follows by  combining  \eqref{eq:dyckformula}, Corollary \ref{cor:Mobius_inversion} and Theorem \ref{th:mainth}.

\section{Tables of tilting bundles}
\label{sec:tables}
If in \eqref{eq:tilting} we replace $\tau$ by $\tau+u$, $u\in \ZZ$ then
this amounts
to tensoring $\Tau_\tau$ with the line bundle
$\Vscr(u,\ldots,u)$ which does not change its endomorphism ring.
Moreover if $[\tau,\tau']$ does not intersect the inadmissible
locus then $\Tau_\tau=\Tau_{\tau'}$. Hence without loss of generality we may assume
\[
  \tau=\varepsilon+t+m(n-1).
\]
with $0<\varepsilon\ll 1$ and
\[
t=-\frac{p}{k}\qquad 1\le k \le n,\; 0\le p<k.
\]
Below we list the weights $\xi\in (\ZZ^n)^+\cap (\Delta^{m,n}_\tau-\hat{\rho})$ that determine the tilting bundle $\Tscr_\tau$ for varying $t$ and $n=2,3,4$, $m=2$. They were computed with a
computer program.
\subsection{Tilting bundles for \boldmath $(n,m)=(2,2)$}
\[
\begin{array}{lcc}
t=-0/1& [1, 0]&[1, 1]\\
t=-1/2&[0, 0]&[1, 0]
\end{array}
\]
\subsection{Tilting bundles for \boldmath $(n,m)=(3,2)$}
\[
\begin{array}{lccccc}
t=-0/1&
[1, 1, 1]&
[2, 1, 0]&
[2, 1, 1]&
[2, 2, 0]&
[2, 2, 1]\\
t=-1/3&
[1, 1, 0]&
[1, 1, 1]&
[2, 1, 0]&
[2, 1, 1]&
[2, 2, 0]\\
t=-1/2&
[1, 1, 0]&
[2, 0, 0]&
[1, 1, 1]&
[2, 1, 0]&
[2, 1, 1]\\
t=-2/3&
[1, 0, 0]&
[1, 1, 0]&
[2, 0, 0]&
[1, 1, 1]&
[2, 1, 0]
\end{array}
\]
\subsection{Tilting bundles for \boldmath $(n,m)=(4,2)$}
\[
\begin{array}{lcccccccccccccc}
t=-0/1&
[3, 1, 1, 1]&
[2, 2, 1, 1]&
[3, 2, 1, 0]&
[2, 2, 2, 0]&
[3, 2, 1, 1]&
[2, 2, 2, 1]&
[3, 3, 1, 0]\\
&
[3, 2, 2, 0]&
[3, 3, 1, 1]&
[3, 2, 2, 1]&
[2, 2, 2, 2]&
[3, 3, 2, 0]&
[3, 3, 2, 1]&
[3, 2, 2, 2]\\*[1mm]
t=-1/4&
[2, 1, 1, 1]&
[2, 2, 1, 0]&
[3, 1, 1, 1]&
[2, 2, 1, 1]&
[3, 2, 1, 0]&
[2, 2, 2, 0]&
[3, 2, 1, 1]\\
&
[2, 2, 2, 1]&
[3, 3, 1, 0]&
[3, 2, 2, 0]&
[3, 3, 1, 1]&
[3, 2, 2, 1]&
[2, 2, 2, 2]&
[3, 3, 2, 0]\\*[1mm]
t=-1/3&
[2, 1, 1, 1]&
[3, 1, 1, 0]&
[2, 2, 1, 0]&
[3, 1, 1, 1]&
[2, 2, 1, 1]&
[3, 2, 1, 0]&
[2, 2, 2, 0]\\
&
[3, 2, 1, 1]&
[2, 2, 2, 1]&
[3, 3, 1, 0]&
[3, 2, 2, 0]&
[3, 3, 1, 1]&
[3, 2, 2, 1]&
[2, 2, 2, 2]\\*[1mm]
t=-1/2&
[1, 1, 1, 1]&
[2, 1, 1, 0]&
[2, 2, 0, 0]&
[2, 1, 1, 1]&
[3, 1, 1, 0]&
[2, 2, 1, 0]&
[3, 2, 0, 0]\\
&
[3, 1, 1, 1]&
[2, 2, 1, 1]&
[3, 2, 1, 0]&
[2, 2, 2, 0]&
[3, 2, 1, 1]&
[2, 2, 2, 1]&
[3, 2, 2, 0]\\*[1mm]
t=-2/3&
[1, 1, 1, 1]&
[2, 1, 1, 0]&
[3, 1, 0, 0]&
[2, 2, 0, 0]&
[2, 1, 1, 1]&
[3, 1, 1, 0]&
[2, 2, 1, 0]\\
&
[3, 2, 0, 0]&
[3, 1, 1, 1]&
[2, 2, 1, 1]&
[3, 2, 1, 0]&
[2, 2, 2, 0]&
[3, 2, 1, 1]&
[2, 2, 2, 1]\\*[1mm]
t=-3/4&
[1, 1, 1, 0]&
[2, 1, 0, 0]&
[1, 1, 1, 1]&
[2, 1, 1, 0]&
[3, 1, 0, 0]&
[2, 2, 0, 0]&
[2, 1, 1, 1]\\
&
[3, 1, 1, 0]&
[2, 2, 1, 0]&
[3, 2, 0, 0]&
[3, 1, 1, 1]&
[2, 2, 1, 1]&
[3, 2, 1, 0]&
[2, 2, 2, 0]\\
\end{array}
\]


\end{document}